\newcommand{\N}{\mathbb{N}}
\newcommand{\I}{\mathbb{I}}
\newcommand{\R}{\mathbb{R}}
\newtheorem*{thmA}{Theorem A}
\newtheorem*{thmB}{Theorem B}
\newtheorem*{thmC}{Theorem C}
\newtheorem{thm}{Theorem}[section]
\newtheorem{lem}[thm]{Lemma}
\newtheorem{cor}[thm]{Corollary}
\newtheorem{prop}[thm]{Proposition}
\theoremstyle{definition}
\newtheorem{defin}[thm]{Definition}
\newtheorem{ex}[thm]{Example}
\newtheorem{remark}[thm]{Remark}
\newcommand{\vp}{\varepsilon}
\newcommand{\ub}{\overline{u}}
\newcommand{\kin}{\!\in\!}
\newcommand{\kminus}{\!-\!}
\newcommand{\coo}{c_{00}}
\def\hangbox to #1 #2{\vskip1pt\hangindent #1\noindent \hbox to
#1{#2}$\!\!$}
\begin{document}

\title{On Shrinking and Boundedly Complete Schauder Frames of Banach spaces }
\author{Rui Liu}
\address{Department of Mathematics and LPMC, Nankai
University, Tianjin 300071, P.R. China}
\address{Department of
Mathematics, Texas A$\&$M University, College Station, Texas
77843-3368}

\email{rliu@math.tamu.edu; leorui@mail.nankai.edu.cn}
\thanks{This work is supported by funds from the A. G. $\&$ M. E. Owen
Chair of Mathematics at Texas A$\&$M University, the China
Scholarship Council (CSC), the National Natural Science Foundation
of China (No. 10571090), and the Doctoral Programme Foundation of
Institution of Higher Education (No. 20060055010)}

\begin{abstract}
This paper  studies Schauder frames in Banach
spaces, a concept which is a natural generalization of frames in Hilbert
spaces and Schauder bases in Banach spaces. The associated minimal and maximal spaces
 are introduced, as are shrinking and boundedly complete Schauder frames. Our main results
extend the classical duality theorems on bases to the situation of Schauder frames.
 In particular, we will generalize James' results on shrinking and boundedly complete bases to
  frames. Secondly we will extend his characterization of the reflexivity  of spaces with unconditional
   bases to  spaces with unconditional frames.
\end{abstract}

\maketitle

\section{Introduction}

The theory of frames in Hilbert spaces presents  a central tool in
many areas and  has developed rather rapidly in the past decade. The
motivation has come from applications to engineering, i.e. signal
analysis, as well as from applications to different areas of
Mathematics, such as, sampling theory \cite{AG}, operator theory
\cite{HL}, harmonic analysis \cite{Gr}, nonlinear sparse
approximation \cite{DE}, pseudo-differential operators \cite{GH},
and quantum computing \cite{EF}. Recently, the theory of frames also
showed connections to theoretical problems such as the
Kadison-Singer Problem \cite{CFTW}.

A standard frame for a Hilbert space $H$ is a family of vectors
$x_i\in H$,  $i\in\N$, such that there are constants $A, B>0$ for
which
$$A\,\|x\|^2\le \sum|\langle x, x_i\rangle|^2\le B\,\|x\|^2, \mbox{ whenever } x\in H.$$
In this paper we consider Schauder frames in Banach spaces,
 which, on the one hand, generalize Hilbert frames, and extend the notion of
 Schauder basis, on the other.

In \cite{CL}, D. Carando and S. Lassalle consider the duality theory
for atomic decompositions. In our independent work, we will mostly
concentrate on properties of Schauder frames, which do not depend on
the choice of associated spaces, give out the concepts of minimal
and maximal (associated) spaces and the corresponding minimal and
maximal (associated) bases with respect to Schauder frames, and
closely connect them to the duality theory. Moreover, we extend
James' well known results on characterizing the reflexivity of
spaces with an unconditional bases, to spaces with unconditional
frames.

In Section 2 we recall the basic definitions and properties of
Schauder frames. Then we introduce the concept of  shrinking
and boundedly complete frames and prove some elementary facts.

Section 3 deals with the concept of associated spaces, and
introduces the definitions of minimal and maximal (associated) spaces
and the corresponding minimal and maximal (associated) bases with
respect to Schauder frames.

In Section 4 we extend James' results on shrinking and boundedly
bases to frames \cite{Ja}   and prove the following  theorems. All
necessary definitions can be found in the following sections   2 and
3.

\begin{thmA} Let $(x_i,f_i)\subset X\times X^*$ be a Schauder frame of a Banach space $X$  and assume that
for all $m\in\N$
$$\lim_{n\to\infty} \|f_m|_{\text{span}(x_i:i\ge n)}\|=0.$$
Then the following are equivalent.
\begin{enumerate}
\item $(x_i,f_i)$ is shrinking.
\item Every normalized block of $(x_i)$ is weakly null.
\item $X^*=\overline{\text{span}(f_i:i \in\N)}$.
\item The minimal associated  basis is shrinking.
\end{enumerate}
\end{thmA}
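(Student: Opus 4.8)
The plan is to establish $(1)\Leftrightarrow(2)\Leftrightarrow(3)$ and $(3)\Leftrightarrow(4)$. Write $X_{\ge n}:=\overline{\text{span}}(x_j:j\ge n)$ and let $S_n\colon X\to X$, $S_nx=\sum_{i=1}^{n}f_i(x)x_i$, be the partial sum operators; by uniform boundedness $K:=\sup_n\|S_n\|<\infty$, and since $(I-S_n)X\subseteq X_{\ge n+1}$ we get $\|x^*-x^*\!\circ\!S_n\|=\|x^*\!\circ\!(I-S_n)\|\le(1+K)\,\|x^*|_{X_{\ge n+1}}\|$. I will use from \S3 the minimal associated space $Z_{\min}$ (the completion of $c_{00}$ in the norm $\|(a_i)\|=\sup_n\|\sum_{i\le n}a_ix_i\|_X$) with its monotone basis $(e_i)$, coordinate functionals $(e_i^*)$, and the canonical operators $R\colon Z_{\min}\to X$, $e_i\mapsto x_i$ (norm $\le1$), and $u\colon X\to Z_{\min}$, $x\mapsto\sum f_i(x)e_i$ (norm $\le K$), which satisfy $Ru=\mathrm{id}_X$. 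Hence $u$ is an isomorphic embedding (so $u^{**}$ is injective), $u^*$ is a quotient map with right inverse $R^*$, $R^*u^*$ is a bounded projection of $Z_{\min}^*$ onto $R^*(X^*)$ along $\ker u^*$, and — the identity used most often — $u^*e_i^*=f_i$ for all $i$. The hypothesis $\|f_m|_{X_{\ge n}}\|\to0$ enters at exactly two points below.

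For $(1)\Leftrightarrow(2)\Leftrightarrow(3)$: $(1)\Rightarrow(2)$ is immediate, since a normalized block $(u_k)$ lies in $X_{\ge m_k}$ with $m_k\to\infty$, so $|x^*(u_k)|\le\|x^*|_{X_{\ge m_k}}\|\to0$. For $(2)\Rightarrow(1)$ I argue contrapositively: if $\|x^*|_{X_{\ge n}}\|\ge\vp$ for all $n$, I take unit vectors $y_n\in X_{\ge n}$ with $x^*(y_n)>\vp/2$, approximate each by a finitely supported vector, splice successive ones into a block sequence, and normalize to obtain a normalized block on which $x^*$ stays bounded below. $(1)\Rightarrow(3)$ follows from $x^*\!\circ\!S_n=\sum_{i\le n}x^*(x_i)f_i\in\text{span}(f_i)$ together with $\|x^*-x^*\!\circ\!S_n\|\to0$. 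For $(3)\Rightarrow(1)$ — the first use of the hypothesis — given $x^*$ and $\vp>0$ pick $y^*=\sum_{m\le M}c_mf_m$ with $\|x^*-y^*\|<\vp/2$; then for $y\in X_{\ge n}$ with $\|y\|\le1$ one has $|x^*(y)|\le\vp/2+\sum_{m\le M}|c_m|\,\|f_m|_{X_{\ge n}}\|$, and the finite sum tends to $0$, so $\|x^*|_{X_{\ge n}}\|<\vp$ for $n$ large.

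For $(3)\Leftrightarrow(4)$: $(4)\Rightarrow(3)$ is the bidual argument — if $X^*\ne\overline{\text{span}}(f_i)$, pick $0\ne x^{**}$ with $x^{**}(f_i)=0$ for all $i$; then $\la e_i^*,u^{**}x^{**}\ra=\la u^*e_i^*,x^{**}\ra=x^{**}(f_i)=0$, so $u^{**}x^{**}$ annihilates $\overline{\text{span}}(e_i^*)=Z_{\min}^*$ (by (4)), forcing $u^{**}x^{**}=0$ against the injectivity of $u^{**}$. For $(3)\Rightarrow(4)$ I apply James' classical characterization of shrinking bases to $(e_i)$ in $Z_{\min}$, so it suffices to show every normalized block $(v_k)$ of $(e_i)$ is weakly null in $Z_{\min}$. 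From $\|v_k\|_{Z_{\min}}=\sup_m\|RQ_mv_k\|_X$ (with $Q_m$ the basis projections) pick $m_k$ with $\|RQ_{m_k}v_k\|_X\ge\tfrac{1}{2}$; since the normalization of $RQ_{m_k}v_k$ is then a normalized block of $(x_i)$, it is weakly null by $(3)\Rightarrow(1)\Rightarrow(2)$, and the same holds for $Rv_k-RQ_{m_k}v_k$ along any subsequence on which its norm is bounded below (on the remaining subsequences it tends to $0$ in norm), so $Rv_k\to0$ weakly in $X$. Writing $z^*\in Z_{\min}^*$ as $z^*=R^*x^*+\eta$ with $\eta\in\ker u^*$, the first term contributes $x^*(Rv_k)\to0$, so one is left with $\eta(v_k)=\eta\big((I-uR)v_k\big)$, where $(I-uR)v_k\in\ker R$. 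Here is the second use of the hypothesis: it gives $R^*f_j=\sum_if_j(x_i)e_i^*\in\overline{\text{span}}(e_i^*)$ for each $j$ (because $\|(R^*f_j)|_{\overline{\text{span}}(e_i:i>n)}\|\le\|f_j|_{X_{\ge n+1}}\|\to0$), hence under (3) that $R^*(X^*)\subseteq\overline{\text{span}}(e_i^*)$ and therefore $\overline{\text{span}}(e_i^*)+\ker u^*=Z_{\min}^*$; combined with an explicit description of $\ker R$ in $Z_{\min}$ this lets one approximate $\eta$ in norm by functionals $\sum_j\beta_j(e_j^*-R^*f_j)\in\ker u^*$, and each such functional sends $v_k$ to $\sum_j\beta_je_j^*(v_k)-(\sum_j\beta_jf_j)(Rv_k)\to0$; as $\|v_k\|_{Z_{\min}}=1$, an $\vp/2$-estimate yields $\eta(v_k)\to0$.

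The implications $(1)\Leftrightarrow(2)\Leftrightarrow(3)$ and $(4)\Rightarrow(3)$ are soft once the \S2–\S3 apparatus is in place; the real content, and the step I expect to be the main obstacle, is $(3)\Rightarrow(4)$. Its delicate point is the part $\ker u^*\cong(\ker R)^*$ of $Z_{\min}^*$ that is invisible from $X^*$: one must show, once more using $\|f_m|_{X_{\ge n}}\|\to0$, that the restricted coordinate functionals $e_i^*|_{\ker R}$ have dense linear span in $(\ker R)^*$ — equivalently, that the minimal associated basis cannot carry a non-shrinking direction hidden inside $\ker R$ while the frame itself is shrinking. This, rather than the James-type extractions, is where the tail-norm hypothesis on the $f_m$ is genuinely indispensable.
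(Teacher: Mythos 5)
Your handling of $(1)\Leftrightarrow(2)\Leftrightarrow(3)$ and of $(4)\Rightarrow(3)$ is sound (the annihilator argument with $u^{**}$ is a clean alternative to the paper's route through Proposition~\ref{bdd.2}(a)), but the implication $(3)\Rightarrow(4)$ --- which you yourself single out as the real content --- is not actually proved. After the decomposition $Z_{\min}^*=R^*(X^*)\oplus\ker u^*$ and the correct observation that $R^*f_j=\sum_i f_j(x_i)e_i^*\in\overline{\mathrm{span}}(e_i^*)$, everything hinges on showing $\eta(v_k)\to 0$ for every $\eta\in\ker u^*$, equivalently that $\mathrm{span}(e_j^*-R^*f_j:j\in\N)$ is norm dense in $\ker u^*$, equivalently that the restrictions $e_i^*|_{\ker R}$ span a dense subspace of $(\ker R)^*$. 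You assert that ``an explicit description of $\ker R$'' lets one approximate $\eta$ by such finite combinations, but no description and no argument are supplied; and this missing claim is not softer than the theorem: since you already have $R^*(X^*)\subseteq\overline{\mathrm{span}}(e_i^*)$, the density statement is exactly equivalent to $\overline{\mathrm{span}}(e_i^*)=Z_{\min}^*$, i.e.\ to (4) itself. So the proposal reduces $(3)\Rightarrow(4)$ to a restatement of $(3)\Rightarrow(4)$ and stops there --- your closing paragraph concedes as much. Weak nullity of $Rv_k$ in $X$ genuinely does not suffice on its own, because $R$ has a large kernel and nothing in your argument controls how the blocks $v_k$ sit relative to $\ker R$.

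The paper closes precisely this gap by a different, quantitative device (Lemma~\ref{L:3.2}, a variant of Lemma~2.10 of \cite{OS}), used in Proposition~\ref{shr.bdd}(a): if $(y_i)\subset B_{Z_{Min}}$ is a block basis of $(e_i^{Min})$ whose images $w_i=S_{Min}(y_i)$ form a semi-normalized basic sequence in $X$, then the interval form of the $Z_{Min}$-norm yields $\|\sum a_iw_i\|\le\|\sum a_iy_i\|\le(\tfrac{2K}{a}+K)\|\sum a_iw_i\|$, so $(y_i)$ and $(w_i)$ are equivalent basic sequences and weak nullity of $(w_i)$ in $X$ (which is condition (2)) transfers to weak nullity of $(y_i)$ in $Z_{Min}$; in the degenerate case $\|S_{Min}(y_i)\|\to0$ one splits each $y_i$ at an interval where its $Z_{Min}$-norm is attained into two pieces whose images are, after passing to subsequences, semi-normalized, weakly null and basic, and applies the equivalence to each piece. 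It is exactly this mechanism that controls the ``$\ker R$ direction'' your duality decomposition leaves untouched; to complete your $(3)\Rightarrow(4)$ you would need either this block-equivalence argument or some genuine proof that $\mathrm{span}(e_i^*|_{\ker R})$ is dense in $(\ker R)^*$, and the latter is where your proposal currently has no content.
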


\begin{thmB} Let $(x_i,f_i)\subset X\times X^*$ be a Schauder frame of a Banach space $X$  and assume that
for all $m\in\N$
$$\lim_{n\to\infty} \|f_m|_{\text{span}(x_i:i\ge n)}\|=0 \text{ and } \lim_{n\to\infty} \|x_m|_{\text{span}(f_i:i\ge n)}\|=0.$$
Then the following are equivalent.
\begin{enumerate}
\item $(x_i,f_i)$ is boundedly complete.
\item $X$ is isomorphic to $ \overline{\text{span}(f_i:i \in\N)}^*$ under the natural canonical map.
\item The maximal associated basis is boundedly complete.
\end{enumerate}
\end{thmB}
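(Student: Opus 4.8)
The plan is to derive the theorem from James' classical duality theorem for boundedly complete bases \cite{Ja}, applied to the maximal associated basis, after recording what the hypotheses give. Put $W:=\overline{\text{span}(f_i:i\in\N)}\subseteq X^{*}$ and let $J\colon X\to W^{*}$, $J(x)(g)=g(x)$, be the canonical map of $(2)$; it is injective (if $J(x)=0$ then $f_i(x)=0$ for every $i$, so $x=\sum_i f_i(x)x_i=0$) and $\|J\|\le1$. The first hypothesis $\|f_m|_{\text{span}(x_i:i\ge n)}\|\to0$ is exactly what makes the dual system $(f_i,x_i)$ a Schauder frame of $W$: with $P_nx=\sum_{i\le n}f_i(x)x_i$, the adjoint partial sums $Q_ng:=g\circ P_n=\sum_{i\le n}g(x_i)f_i$ are uniformly bounded, and $\|g-Q_ng\|_{X^{*}}$ is at most a constant times $\|g|_{\text{span}(x_i:i>n)}\|$, which tends to $0$ when $g\in W$; hence $g=\sum_i g(x_i)f_i$ in norm for all $g\in W$. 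The same remark applied to the frame $(f_i,x_i)$ of $W$ — whose analogue of the first hypothesis is precisely the second hypothesis here — shows that $(x_i,f_i)$ is a Schauder frame of $V:=\overline{\text{span}(x_i:i\in\N)}\subseteq W^{*}$, and $V=\overline{J(X)}$ (since $J(x)=\sum_i f_i(x)x_i\in V$ while $x_i=J(x_i)$). These facts are in Sections 2--3 or follow from the arguments above. I will prove the cycle $(1)\Rightarrow(2)\Rightarrow(3)\Rightarrow(1)$.

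$(1)\Rightarrow(2)$: let $\phi\in W^{*}$ and put $a_i:=\phi(f_i)$. For $g\in X^{*}$, $g\bigl(\sum_{i\le n}a_ix_i\bigr)=\sum_{i\le n}\phi(f_i)g(x_i)=\phi\bigl(\sum_{i\le n}g(x_i)f_i\bigr)=\phi(g\circ P_n)$, so $\bigl\|\sum_{i\le n}a_ix_i\bigr\|\le\|\phi\|\sup_n\|P_n\|=:K\|\phi\|$ for all $n$ ($K<\infty$ by the uniform boundedness principle). Bounded completeness of $(x_i,f_i)$ then yields $x:=\sum_i a_ix_i\in X$ with $\|x\|\le K\|\phi\|$; since $g\circ P_n\to g$ in $X^{*}$ for $g\in W$ (first hypothesis), $J(x)(g)=\lim_n\phi(g\circ P_n)=\phi(g)$ for every $g\in W$, i.e.\ $J(x)=\phi$. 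Thus $J$ is a bijection of $X$ onto $W^{*}$ with $\|J^{-1}\|\le K$, which is $(2)$.

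$(3)\Rightarrow(1)$: suppose $\sup_n\bigl\|\sum_{i\le n}a_ix_i\bigr\|_X\le M<\infty$. The maximal associated norm is the smallest associated norm, so in particular it is dominated by the norm of the minimal associated space; on a finitely supported vector the latter equals $\sup_{k\le n}\bigl\|\sum_{i\le k}a_ix_i\bigr\|_X\le M$, whence the partial sums of $\sum_i a_iz_i$ are bounded in $Z_{\max}$. If the maximal associated basis $(z_i)$ is boundedly complete, $\sum_i a_iz_i$ converges in $Z_{\max}$, and applying the bounded reconstruction map $T_{\max}\colon Z_{\max}\to X$ ($z_i\mapsto x_i$) shows that $\sum_i a_ix_i$ converges in $X$; so $(x_i,f_i)$ is boundedly complete.

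$(2)\Rightarrow(3)$ is the crux and the main obstacle. Here one uses James' theorem for the basis $(z_i)$ of $Z_{\max}$: since $[z_i^{*}]:=\overline{\text{span}(z_i^{*}:i\in\N)}\subseteq Z_{\max}^{*}$ norms $Z_{\max}$, the canonical map $\kappa\colon Z_{\max}\to[z_i^{*}]^{*}$ is always an isomorphic embedding, and $(z_i)$ is boundedly complete if and only if $\kappa$ is onto. The work is to identify $[z_i^{*}]^{*}$ with $W^{*}$ (equivalently with $X$), via the maps accompanying $Z_{\max}$, in such a way that $\kappa$ becomes $J$; then $(2)$ (surjectivity of $J$) forces $\kappa$ onto and James gives $(3)$. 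This identification is where both hypotheses are genuinely needed — the first to control $W$ and the operators $Q_n$, the second to control the $x_i$ as elements of $W^{*}$ and to locate $\overline{J(X)}=V$ — and it must be done with care: the naive map $S_{\max}^{*}|_{[z_i^{*}]}\colon[z_i^{*}]\to W$, $z_i^{*}\mapsto f_i$, is an isomorphism only in the trivial case when $(x_i,f_i)$ is itself a basis (otherwise $f_j(x_i)\ne\delta_{ij}$ and the expansion $z_m^{*}=\sum_i f_m(x_i)z_i^{*}$ would fail), so one must instead use the finer correspondence furnished by the minimal associated space of the dual frame $(f_i,x_i)$ on $W$. In short, a Schauder frame has no system of biorthogonal projections, so the classical duality argument cannot be run directly in $X$; the maximal associated basis restores the missing projection structure, and checking the compatibility of the three canonical maps involved — the reconstruction $T_{\max}$, the embedding $J\colon X\to W^{*}$, and the embedding of $Z_{\max}$ into the bidual of $[z_i^{*}]$ — is the technical heart of the argument.
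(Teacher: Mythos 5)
Your cycle has a genuine gap at its central implication: $(2)\Rightarrow(3)$ is not proved, only described as ``the crux'' together with a list of obstacles (the failure of the naive map $z_i^*\mapsto f_i$, the need for a ``finer correspondence'') that you leave unresolved. The paper closes exactly this step, but not by running James' theorem inside $Z_{Max}$; instead it first upgrades $(2)$ to the statement that $\|x^{**}|_{\text{span}(f_i:i\ge n)}\|\to 0$ for every $x^{**}\in X^{**}$ (given $x^{**}$, restrict it to $Y=\overline{\text{span}}(f_i:i\in\N)$, use the isomorphism $J$ to find $x\in X$ agreeing with $x^{**}$ on $Y$, and invoke Proposition \ref{self}, which is where local bounded completeness enters); this is condition (a) of Theorem \ref{cor.bdd}. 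Then Proposition \ref{shr.bdd}(b) shows by a direct computation with the $\|\cdot\|_{Max}$-norm that the coordinate functionals $(e_i^*)$ of $(e_i^{Max})$ are equivalent to the minimal basis with respect to $(f_i)\subset X^*$, and since the condition just established makes every normalized block sequence of $(f_i)$ weakly null, Lemma \ref{L:3.2}(b) applied to the frame $(f_i,x_i)$ of $Y$ gives that $(e_i^*)$ is shrinking, hence $(e_i^{Max})$ is boundedly complete. In other words, the identification you were seeking is precisely this equivalence of $(e_i^*)$ with the minimal basis of the dual frame, and without carrying it out your proof of the theorem is incomplete.

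Two further points need repair. First, in your $(3)\Rightarrow(1)$ you assert that the maximal associated norm is the \emph{smallest} associated norm and is dominated by the minimal one; this is backwards: by Theorem \ref{minimal.1}(b) and Theorem \ref{associated.maximal.pp1}(a), $(e_i^{Max})$ dominates every associated basis, which in turn dominates $(e_i^{Min})$. Consequently, boundedness of the partial sums $\sum_{i\le n}a_ix_i$ in $X$ does \emph{not} give boundedness of $\sum_{i\le n}a_ie_i^{Max}$ in $Z_{Max}$ for arbitrary scalars; the step only works for coefficients of the form $a_i=x^{**}(f_i)$, for which one has the direct estimate $\|\sum_{i=l}^r x^{**}(f_i)e_i^{Max}\|\le\|x^{**}|_{\text{span}(f_i:i\ge l)}\|\le\|x^{**}\|$ (this is how Proposition \ref{bdd.2}(b) combined with Theorem \ref{associated.maximal.pp1}(c) argues); also the reconstruction operator $z\mapsto\sum a_ix_i$ is $S_{Max}$, not $T_{Max}$. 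Second, you work throughout with the classical sequential notion of bounded completeness (``bounded partial sums imply convergence''), whereas Definition \ref{def.local} defines a boundedly complete frame as weakly localized together with norm convergence of $\sum x^{**}(f_i)x_i$ for every $x^{**}\in X^{**}$. Both affected implications are repairable — in $(1)\Rightarrow(2)$ extend $\phi\in W^*$ to $x^{**}\in X^{**}$ by Hahn--Banach and apply pre-bounded completeness directly instead of the bounded-partial-sum argument, and in $(3)\Rightarrow(1)$ use the coefficients $x^{**}(f_i)$ as above — but as written these steps prove a different statement from the one in the theorem.
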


In Section 5, we discuss
unconditional Schauder frames. We obtain a generalization of James's
theorem and prove that a Banach space with a locally shrinking and
unconditional Schauder frame is either reflexive or contains
isomorphic copies of $\ell_1$ or $c_0$.

\begin{thmC}  Let $(x_i,f_i)\subset X\times X^*$ be a unconditional Schauder frame of a Banach space $X$  and assume that
for all $m\in\N$
$$\lim_{n\to\infty} \|f_m|_{\text{span}(x_i:i\ge n)}\|=0.$$
Then $X$ is reflexive  if an only if $X$ does not contain isomorphic copies of $c_0$ and $\ell_1$
\end{thmC}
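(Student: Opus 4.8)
The plan is as follows. The implication ``$X$ reflexive $\Rightarrow$ $X$ contains no copy of $c_{0}$ or $\ell_{1}$'' is trivial, since $c_{0}$ and $\ell_{1}$ are not reflexive and reflexivity passes to closed subspaces. For the converse one assumes $X$ contains no copy of $c_{0}$ or $\ell_{1}$ and wants $X$ reflexive; it is enough to show that, under the hypotheses of Theorem C, the frame $(x_{i},f_{i})$ is \emph{simultaneously shrinking and boundedly complete}, and then to read off reflexivity from the duality underlying Theorems A and B.

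\emph{Shrinking.} Here I would invoke the classical dichotomy for unconditional basic sequences: a seminormalized $K$-unconditional basic sequence $(u_{k})$ which is not weakly null has a subsequence equivalent to the unit vector basis of $\ell_{1}$. (If $|g(u_{k})|\ge\delta$ along a subsequence for some $g\in X^{*}$, choose signs $\vp_{k}$ with $\vp_{k}g(u_{k})=|g(u_{k})|$; then $K$-unconditionality together with a positive/negative splitting of scalars gives $\|\sum a_{k}u_{k}\|\ge\frac{\delta}{2K^{2}\|g\|}\sum|a_{k}|$, while $\|\sum a_{k}u_{k}\|\le(\sup_{k}\|u_{k}\|)\sum|a_{k}|$ always holds.) Since a normalized block basis of an unconditional Schauder frame is again an unconditional basic sequence, the absence of a copy of $\ell_{1}$ in $X$ forces every normalized block of $(x_{i})$ to be weakly null; by Theorem A, implication $(2)\Rightarrow(1)$, the frame is shrinking, and $X^{*}=\overline{\operatorname{span}(f_{i})}=:W$.

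\emph{Bounded completeness.} Here one uses the complementary dichotomy: a seminormalized $K$-unconditional basic sequence $(v_{k})$ with $\sup_{n}\|\sum_{k\le n}v_{k}\|<\infty$ is equivalent to the unit vector basis of $c_{0}$ (the upper estimate by bounding sums over arbitrary subsets via unconditionality, the lower one by seminormalization). If the frame were not boundedly complete, then -- passing through the maximal associated basis, which is unconditional because the frame is -- one produces scalars $(a_{i})$ with uniformly bounded partial sums but with $\sum a_{i}x_{i}$ divergent; grouping the nonzero increments over consecutive blocks and renormalizing (the renormalization factors remain bounded, so unconditionality keeps the new partial sums bounded) yields a normalized block basis of $(x_{i})$ with uniformly bounded partial sums, hence an isomorphic copy of $c_{0}$ in $X$ -- a contradiction. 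So the frame is boundedly complete.

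\emph{Conclusion and main difficulty.} Since the frame is shrinking, $X^{*}=W$ and the adjoint initial projections $P^{*}_{[1,n]}g=\sum_{i\le n}g(x_{i})f_{i}$ (adjoints of the partial reconstruction operators $P_{[1,n]}x=\sum_{i\le n}f_{i}(x)x_{i}$) converge in norm to $g$ for every $g\in W$, with $\sup_{n}\|P_{[1,n]}\|=:K_{0}<\infty$. Given $x^{**}\in X^{**}=W^{*}$, put $a_{i}=x^{**}(f_{i})$ and $s_{n}=\sum_{i\le n}a_{i}x_{i}\in X$; then $\langle s_{n},g\rangle=\langle x^{**},P^{*}_{[1,n]}g\rangle$ for all $g\in W$, so $\|s_{n}\|\le K_{0}\|x^{**}\|$ and $s_{n}\to x^{**}$ in $\sigma(X^{**},X^{*})$. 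Bounded completeness forces $(s_{n})$ to converge in norm to some $x\in X$, and comparing the two limits gives $x^{**}=x\in X$; hence $X$ is reflexive. The crux of the matter is that Theorem C assumes only the one-sided condition $\lim_{n}\|f_{m}|_{\operatorname{span}(x_{i}:i\ge n)}\|=0$, so Theorem B cannot be quoted directly (had the dual smallness condition also held, bounded completeness and the identification $X^{**}=X$ would be immediate from Theorems A and B): both must be obtained by hand, with every renormalization controlled by the single unconditional constant of the frame, and one must check carefully that blocks of $(x_{i})$ genuinely behave as unconditional basic sequences and that the extracted $c_{0}$- and $\ell_{1}$-sequences embed into $X$ itself rather than merely into an associated sequence space.
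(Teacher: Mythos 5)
Your overall strategy is the same as the paper's (trivial direction; no $\ell_1$ forces shrinking; no $c_0$ forces pre-bounded completeness; conclude reflexivity as in Proposition \ref{reflexive}), but there is a genuine gap at exactly the decisive technical point. Both your $\ell_1$-step and your $c_0$-step rest on the assertion that ``a normalized block basis of an unconditional Schauder frame is again an unconditional basic sequence.'' For frames this is unproved and is not true without further work: unlike for a basis, a block $u_k=\sum_{i\in A_k}a_i x_i$ of the frame vectors need not even be a basic sequence, and the unconditional constant $K_u$ governs only expansions of the form $\sum \lambda_i f_i(x)x_i$; since the coefficients of a vector in $\overline{\text{span}}(u_k:k\in\N)$ are in general not its frame coefficients (frames are redundant), $K_u$ does not transfer to blockwise coefficients. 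This is precisely what the hypothesis $\|f_m|_{\text{span}(x_i:i\ge n)}\|\to0$ is for: the paper proves via Lemma \ref{Basic subseq lem} and Corollary \ref{Basic subseq} that a normalized block sequence has a subsequence which is basic with constant close to $K$, and via the perturbation argument of Lemma \ref{uncond.block} that a block basis has a $(K_u+\vp)$-unconditional subsequence. You explicitly flag this as ``the crux'' in your last paragraph, but you never supply the argument, so the two classical dichotomies you invoke have no unconditional basic sequence to be applied to.

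Your bounded-completeness step has a further flaw: the detour through ``the maximal associated basis, which is unconditional because the frame is.'' Under the hypotheses of Theorem C the maximal space need not be an associated space at all (Theorem \ref{associated.maximal.pp1}(c) requires local bounded completeness, which is not assumed here), and unconditionality of $(e_i^{Max})$ is neither established in the paper nor obvious. The detour is also unnecessary: failure of pre-bounded completeness gives directly an $x^{**}$ for which $\sum x^{**}(f_i)x_i$ diverges, and its partial sums are automatically bounded by $K\|x^{**}\|$ (as in the computation of Proposition \ref{pre.pre}(b)); one then extracts blocks of norm at least $\delta$, passes to a basic subsequence by Corollary \ref{Basic subseq}, and gets the $c_0$-upper estimate not from unconditionality of the blocks but from the frame's unconditional constant combined with Goldstine's theorem, which is how the paper proves Theorem \ref{un.bdd}(a). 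Once shrinking and pre-bounded completeness are in hand, your concluding duality argument (equivalently, Proposition \ref{reflexive}) is correct.
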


All Banach spaces in this paper are considered to be spaces over the real number field
$\R$. The unit sphere and the unit ball of a
Banach space $X$  are denoted by $S_X$ and $B_X$, respectively. The vector space
of scalar sequences $(a_i)$, which vanish eventually, is  denoted by $c_{00}$. The usual unit
vector basis of $c_{00}$, as well as the unit vector basis of $c_0$ and
$\ell_p$ ($1\le p<\infty$) and the corresponding coordinate
functionals will be denoted by $(e_i)$ and $(e^*_i)$, respectively.

Given two sequences $(x_i)$ and $(y_i)$ in some Banach space, and
given a constant $C>0$, we say that \emph{$(y_i)$ $C$-dominates
$(x_i)$}, or that \emph{$(x_i)$ is $C$-dominated by $(y_i)$}, if
\begin{eqnarray*}
\Big\|\sum a_i x_i\Big\|\le C\Big\|\sum a_i y_i\Big\| \quad \mbox{
for all } (a_i)\in\coo.
\end{eqnarray*}
We say that \emph{$(y_i)$ dominates $(x_i)$}, or that \emph{$(x_i)$
is dominated by $(y_i)$},
$(y_i)$ $C$-dominates $(x_i)$ for some  constant $C>0$.

\section{Frames in Banach Spaces}

In this section, we give a short review of the concept of frames in
Banach spaces, and make some preparatory observations.

\begin{defin}
Let $X$ be a (finite or infinite dimensional) separable Banach
space. A sequence $(x_i,f_i)_{i\in \I}$, with $(x_i)_{i\in
\I}\subset X$ and $(x_i)_{i\in \I}\subset X^*$ with $\I=\N$ or
$\I=\{1,2, .\, .\, .\, , N\}$ for some $N\in\N$, is called a {\em
(Schauder) frame of $X$} if for every $x\in X$,
\begin{equation}\label{def.frame.eq1}
x=\displaystyle\sum_{i\in\I} f_i(x) x_i.
\end{equation}
In case that $\I=\N$, we mean that the series in
(\ref{def.frame.eq1}) converges in norm, that is,
\begin{equation}
x=\|\cdot\|-\displaystyle\lim_{n\rightarrow\infty}\sum_{i=1}^n
f_i(x)x_i.
\end{equation}

An \emph{unconditional frame of $X$} is a frame $(x_i,f_i)_{i\in\N}$
for $X$ for which the convergence in (\ref{def.frame.eq1}) is
unconditional.

We call a frame $(x_i,f_i)$ {\em bounded } if
$$\sup_i\|x_i\|<\infty \, \mbox{ and } \, \sup_i\|f_i\|<\infty,$$
and \emph{semi-normalized} if $(x_i)$ and $(f_i)$ both are
semi-normalized, that is, if
$$0<\inf_i\|x_i\|\le \sup_i\|x_i\|<\infty \, \text{ and } \,
0<\inf_i\|f_i\|\le \sup_i\|f_i\|<\infty.$$
\end{defin}

\begin{remark} Throughout this paper, it will be our convention that we only
consider non-zero frames $(x_i,f_i)$ indexed by $\N$, that is, the index
set $\I$ will always be $\N$ and we assume that $x_i\neq 0$ and
$f_i\neq 0$ for all  $i\in\N$.
\end{remark}

In the following proposition we recall some easy observations from
\cite{CHL} and \cite{CDOSZ}.
\begin{prop}\cite{CHL,CDOSZ}\label{basic.pp1}
Let $(x_i,f_i)$  be a frame of $X$.
\begin{enumerate}
\item[(a)]
\begin{enumerate}
\item[(i)] Using the Uniform Boundedness Principle we deduce that
$$K=\sup_{x\in B_X}\sup_{m\le n} \Big\|\sum_{i=m}^n f_i(x) x_i\Big\|<\infty.$$
We call $K$ the \textrm{projection constant of $(x_i,f_i)$}.
\item[(ii)]
If $(x_i,f_i)$ is an unconditional frame, then it also follows from
the Uniform Boundedness Principle that
$$K_u=\sup_{x\in B_X}\sup_{(\sigma_i)\subset\{\pm1\}} \Big\|\sum \sigma_i f_i(x) x_i\Big\|<\infty.$$
We call $K_u$ the \textrm{unconditional constant} of $(x_i,f_i)$.
\end{enumerate}
\item[(b)] The sequence $(f_i,x_i)$ is a $w^*$-Schauder frame of $X^*$, that is to say, for every $f\in
X^*$,
$$f=w^*-\lim\limits_{n\to\infty}\sum_{i=1}^n f(x_i) f_i.$$
\item[(c)] For any  $f\in X^*$ and $m\le n$ in $\N$, we have
\begin{equation}\label{E:2.5a.1}
\Big\| \sum_{i=m}^n f(x_i) f_i\Big\|= \sup_{x\in
B_X}\Big|\sum_{i=m}^n f(x_i) f_i(x)\Big|\le
 \|f\| \sup_{x\in B_X}\Big\|\sum_{i=m}^n f_i(x) x_i\Big\|\le K \|f\|,
\end{equation}
and
\begin{align}\label{E:2.5a.2}
\Big\| \sum_{i=m}^n f(x_i) f_i\Big\|&=\sup_{x\in
B_X}\Big|\sum_{i=m}^n f(x_i) f_i(x)\Big|
=\sup_{x\in B_X}\Big|f\Big(\sum_{i=m}^n  f_i(x) x_i\Big)\Big|\\
& \le \sup\limits_{ z \in \, \text{\rm span}(x_i:i\ge m), \|z\|\le K
}|f(z)|=
 K\|f|_{\text{\rm span}(x_i:i\ge m)}\|,\notag
\end{align}
where $K$ is the projection constant of $(x_i,f_i)$.
\end{enumerate}
\end{prop}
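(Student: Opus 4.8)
The plan is to handle the four assertions separately; each reduces either to a single application of the Uniform Boundedness Principle or to a one-line continuity argument, with the defining identity \eqref{def.frame.eq1} doing essentially all the work. For (a)(i) I would introduce the partial-sum operators $S_n\colon x\mapsto\sum_{i=1}^n f_i(x)x_i$, note that each is a bounded finite-rank operator on $X$ (since $f_i\in X^*$ and $x_i\in X$), and observe that by \eqref{def.frame.eq1} the orbit $(S_nx)_n$ converges, hence is bounded, for every $x\in X$. The Uniform Boundedness Principle then gives $C:=\sup_n\|S_n\|<\infty$, and since $\sum_{i=m}^n f_i(x)x_i=S_nx-S_{m-1}x$ (with $S_0=0$) this yields $K\le 2C<\infty$.

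For (a)(ii) the same scheme applies to the family of operators $x\mapsto\sum_{i=1}^n\sigma_i f_i(x)x_i$, but one first needs, for each fixed $x$, a bound on these partial sums that is uniform in $n$ and in the sign sequence $(\sigma_i)$. For this I would invoke the standard fact that an unconditionally convergent series $\sum_i a_i$ in a Banach space has a bounded set of finite subsums, $M(x):=\sup\{\|\sum_{i\in F}a_i\|:F\subset\N\text{ finite}\}<\infty$, together with the observation that any signed partial sum $\sum_{i=1}^n\sigma_i a_i$ is a difference of two such subsums and hence has norm at most $2M(x)$; applied with $a_i=f_i(x)x_i$ this gives the required pointwise bound. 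A second application of the Uniform Boundedness Principle then produces $C_u:=\sup_{n,\sigma}\|\cdot\|<\infty$, and letting $n\to\infty$ (the series $\sum_i\sigma_i f_i(x)x_i$ converging by unconditionality) gives $K_u\le C_u<\infty$.

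For (b) I would simply note that for a fixed $f\in X^*$ and any $x\in X$ one has $\bigl(\sum_{i=1}^n f(x_i)f_i\bigr)(x)=f\bigl(\sum_{i=1}^n f_i(x)x_i\bigr)\to f(x)$ by \eqref{def.frame.eq1} and the continuity of $f$, which, $x$ being arbitrary, is precisely $w^*$-convergence of $\sum_{i=1}^n f(x_i)f_i$ to $f$; the partial sums are norm-bounded by part (c), so this is a genuine $w^*$-limit. Part (c) is then a direct unravelling of the dual norm: $\|\sum_{i=m}^n f(x_i)f_i\|$ is the supremum over $x\in B_X$ of $|\sum_{i=m}^n f(x_i)f_i(x)|$, and $\sum_{i=m}^n f(x_i)f_i(x)=f\bigl(\sum_{i=m}^n f_i(x)x_i\bigr)$. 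Estimating the latter by $\|f\|\sup_{x\in B_X}\|\sum_{i=m}^n f_i(x)x_i\|\le K\|f\|$ gives \eqref{E:2.5a.1}, while using instead that $\sum_{i=m}^n f_i(x)x_i\in\text{span}(x_i:i\ge m)$ with norm at most $K$ for $x\in B_X$ gives \eqref{E:2.5a.2}.

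I expect the only genuinely delicate point to be the uniform bound in (a)(ii): one must use the right characterization of unconditional convergence — boundedness of the set of finite subsums — in order to know the signed partial-sum operators are pointwise bounded uniformly over the sign choices, which is what is needed before the Uniform Boundedness Principle can be applied. Everything else is bookkeeping on top of \eqref{def.frame.eq1} and norm-continuity.
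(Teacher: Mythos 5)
Your proof is correct and takes essentially the route the paper intends: the paper records this proposition as ``easy observations'' cited from \cite{CHL,CDOSZ} without giving details, and its statement already indicates exactly your argument --- the Uniform Boundedness Principle applied to the partial-sum (and signed partial-sum) operators for (a), and the direct dual-norm computation built on the identity $\sum_{i=m}^n f(x_i)f_i(x)=f\bigl(\sum_{i=m}^n f_i(x)x_i\bigr)$ for (b) and (c). Your observation that the pointwise bound uniform in the signs (via boundedness of finite subsums of an unconditionally convergent series) is the one nontrivial ingredient in (a)(ii) is accurate, and the rest checks out.
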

Next, we present some  basic properties of frames in Banach spaces.
\begin{prop}\label{norming}
Let $(x_i, f_i)$ be a frame of a Banach space $X$ and denote by $K$
the projection constant of $(x_i, f_i)$. Then
${\overline{\textrm{span}}(f_i:i\in\N)}$ is a norming subspace of
$X^*$.
\end{prop}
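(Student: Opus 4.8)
The plan is to show that $W := \overline{\text{span}}(f_i:i\in\N)$ is $K$-norming, where $K$ is the projection constant of $(x_i,f_i)$; that is,
$$\|x\|\le K\,\sup\{|f(x)|:f\in W,\ \|f\|\le 1\}\qquad\text{for all }x\in X.$$
The reverse inequality (with constant $1$) holds trivially, and by homogeneity it suffices to verify the displayed estimate for $x\in S_X$.

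First I would fix $x\in S_X$ and, by the Hahn--Banach theorem, choose $g\in S_{X^*}$ with $g(x)=1$. The point is to approximate $g$, as far as its value on $x$ is concerned, by functionals living in $\text{span}(f_i)\subset W$. To this end set $g_n:=\sum_{i=1}^n g(x_i)f_i\in\text{span}(f_i)$. Applying the estimate \eqref{E:2.5a.1} of Proposition \ref{basic.pp1}(c) with $m=1$ and $f=g$ gives the uniform bound $\|g_n\|\le K\|g\|=K$ for every $n\in\N$.

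Next I would evaluate $g_n$ at $x$: since
$$g_n(x)=\sum_{i=1}^n g(x_i)f_i(x)=g\Big(\sum_{i=1}^n f_i(x)x_i\Big),$$
and $\sum_{i=1}^n f_i(x)x_i\to x$ by the defining property of a frame, continuity of $g$ yields $g_n(x)\to g(x)=1$. Hence, given $\vp\in(0,1)$, there is $n$ with $g_n(x)>1-\vp$; in particular $g_n\neq 0$, so $h_n:=g_n/\|g_n\|\in W$ satisfies $\|h_n\|=1$ and $h_n(x)=g_n(x)/\|g_n\|\ge(1-\vp)/K$. Letting $\vp\downarrow 0$ we obtain $\sup\{|f(x)|:f\in W,\ \|f\|\le 1\}\ge 1/K=\|x\|/K$, which is precisely what was needed.

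There is no real obstacle here: the argument is routine once one identifies the two ingredients that must be invoked, namely (i) the uniform bound $\|g_n\|\le K$, which is exactly the content of the projection-constant estimate in Proposition \ref{basic.pp1}(c), and (ii) the fact that $g_n$ is eventually nonzero — forced by $g_n(x)\to 1$ — so that the normalization $h_n=g_n/\|g_n\|$ is legitimate. If one adopts the normalization-free formulation of ``norming'', the same computation shows directly that $\sup_n |g_n(x)|/\|g_n\|\ge 1/K$ for every $x\in S_X$.
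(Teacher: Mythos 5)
Your argument is correct and rests on exactly the same two ingredients as the paper's proof: the $w^*$-approximation $g_n=\sum_{i=1}^n g(x_i)f_i\to g$ from Proposition \ref{basic.pp1}(b) and the uniform bound $\|g_n\|\le K\|g\|$ from \eqref{E:2.5a.1}. The only difference is presentational — the paper states the inclusion $B_{X^*}\subset \overline{K\cdot B_{X^*}\cap\mathrm{span}(f_i)}^{w^*}$ and leaves the norming conclusion as ``easy to deduce,'' whereas you carry out that final deduction explicitly for a Hahn--Banach norming functional of a given $x\in S_X$, which is a welcome completion of the same argument.
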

\begin{proof}
By Proposition \ref{basic.pp1} (b) and (c) (\ref{E:2.5a.1}), for all
$f\in B_{X^*}$ and $n\in \N$ we have
$$f=w^*\mbox{-}\lim\limits_{n\to\infty}\sum\limits_{i=1}^n f(x_i)
f_i, \ \ \left\|\sum_{i=1}^n f(x_i) f_i\right\|\le K,$$ where $K$ is
the projection constant of $(x_i,f_i)$.
Thus, we obtain that $$B_{X^*}\subset \overline{K\cdot
B_{X^*}\bigcap\mathrm{span}(f_i:i\in\N)}^{w^*}\subset K\cdot
B_{X^*}.$$ Then it is easy to deduce that
$\overline{\textrm{span}}(f_i:i\in\N)$ is norming for $X$.
\end{proof}
\begin{defin}\label{def.local}
Let $(x_i,f_i)$ be a frame of a Banach space $X$.

 $(x_i,f_i)$ is called \emph{locally shrinking} if for all $m\in\N$
$\|f_m|_{\textrm{span}(x_i:i\geq n)}\|\to0$ as $n\to \infty$.
 $(x_i,f_i)$ is called \emph{locally boundedly complete} if for all
$m\in\N$ $\|x_m|_{\textrm{span}(f_i:i\geq n)}\|\to0$ as $n\to
\infty$.  $(x_i,f_i)$ is called {\em weakly localized } if it
is locally shrinking and locally boundedly complete.

The frame $(x_i,f_i)$ is called {\em  pre-shrinking } if
$(f_i,x_i)$ is a
frame of $X^*$.It is called \emph{pre-boundedly complete} if for all $x^{**}\in X^{**}$, $\sum\limits_{i=1}^\infty
x^{**}(f_i) x_i$ converges.

We call $(x_i,f_i)$ {\em shrinking } if it is  locally shrinking and  pre-shrinking, and we call
$(x_i,f_i)$ {\em boundedly complete } if it  weakly localized and {\em pre boundedly complete}.
 \end{defin}

It is clear that every basis for a Banach space is
 weakly localized. However, it is
false for frames. The following example is an unconditional and
semi-normalized frame  for $\ell_1$ which is not locally shrinking
or locally boundedly complete. We leave the proof to the reader.
\begin{ex}
Let $(e_i)$ denote the usual unit vector basis of $\ell_1$ and let
$(e_i^*)$ be the corresponding coordinate functionals, and set
${\bf 1}=(1,1,1, .\, .\, .\, )\in \ell_\infty$. Then define a
sequence $(x_i,f_i)\subset \ell_1 \times \ell_\infty$ by putting
$x_{2i-1}=x_{2i}=e_i$ for all $i\in\N$ and
$$
f_i=\left\{%
\begin{array}{ll}
    {\bf 1}, & \hbox{if $\!i=\!1$;} \\
    e_1^*-{\bf 1}, & \hbox{if $i\!=\!2$;} \\
    e_k^*-e_1^*/2^k, & \hbox{if $i\!=\!2k\kminus1$ for $k\kin\N\setminus\{1\}$;} \\
    e_1^*/2^k, & \hbox{if $i\!=\!2k$ for $k\kin\N\setminus\{1\}$.} \\
\end{array}%
\right.$$
\end{ex}

\begin{prop}\label{self}
Let $(x_i,f_i)$ be a frame of a Banach space $X$. Then the space
$$X_0=\Big\{x\in X: \|x|_{\textrm{span}(f_i:i\geq n)}\|\to 0 \mbox{ as } n\to\infty \Big\}$$
is a norm closed subspace of $X$. Moreover, if  $(x_i,f_i)$ is locally
boundedly complete, then $X_0=X.$
\end{prop}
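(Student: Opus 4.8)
The plan is to prove the two assertions separately. For the first, I would show $X_0$ is a linear subspace and then that it is norm closed. Linearity is immediate from the triangle inequality: if $x,y\in X_0$, then for each $n$ we have $\|(x+y)|_{\text{span}(f_i:i\ge n)}\|\le \|x|_{\text{span}(f_i:i\ge n)}\|+\|y|_{\text{span}(f_i:i\ge n)}\|\to 0$, and scaling is clear. For closedness, suppose $x_k\to x$ in norm with each $x_k\in X_0$. Given $\vp>0$, fix $k$ with $\|x-x_k\|<\vp$, and then choose $N$ so that $\|x_k|_{\text{span}(f_i:i\ge n)}\|<\vp$ for all $n\ge N$; the key point is that restriction to $\text{span}(f_i:i\ge n)$ is a norm-$1$-bounded operation in the sense that $\|\,y|_{\text{span}(f_i:i\ge n)}\|\le\|y\|$ for any $y\in X$ (indeed this restriction is literally the restriction of the functional-evaluation $z\mapsto(f_i(z))$, or more precisely one estimates $|f(z)|\le\|f\|\,\|z\|$ uniformly over the relevant $f$'s of norm $\le 1$). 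Hence $\|x|_{\text{span}(f_i:i\ge n)}\|\le\|x_k|_{\text{span}(f_i:i\ge n)}\|+\|x-x_k\|<2\vp$ for $n\ge N$, so $x\in X_0$.

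For the second assertion, assume $(x_i,f_i)$ is locally boundedly complete, i.e. for every $m\in\N$, $\|x_m|_{\text{span}(f_i:i\ge n)}\|\to 0$ as $n\to\infty$. This says precisely that each $x_m\in X_0$. Since $X_0$ is a norm-closed subspace (just proved), it contains $\overline{\text{span}}(x_i:i\in\N)$. But $(x_i,f_i)$ is a frame of $X$, so by the reconstruction formula $x=\sum_i f_i(x)x_i$ every $x\in X$ lies in $\overline{\text{span}}(x_i:i\in\N)$; that is, $X=\overline{\text{span}}(x_i:i\in\N)\subseteq X_0\subseteq X$, hence $X_0=X$.

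The only step requiring any care is the uniform bound $\|\,y|_{\text{span}(f_i:i\ge n)}\|\le\|y\|$ used in the closedness argument, and this is essentially a definitional matter: $y|_{\text{span}(f_i:i\ge n)}$ is the norm of $y$ viewed as a functional on the subspace $\text{span}(f_i:i\ge n)\subseteq X^*$ (under the canonical embedding $X\hookrightarrow X^{**}$, or equivalently the supremum of $|y^{**}(f)|$ over $f$ in the unit ball of that subspace), which is at most $\|y^{**}\|=\|y\|$. So there is no real obstacle here; the proposition is a straightforward bookkeeping consequence of the frame reconstruction formula together with the completeness of $X$.
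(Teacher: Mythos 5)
Your proposal is correct and follows essentially the same route as the paper: the same triangle-inequality estimate $\|x|_{\text{span}(f_i:i\ge n)}\|\le\|x-x_k\|+\|x_k|_{\text{span}(f_i:i\ge n)}\|$ for closedness, and the same observation that local bounded completeness puts each $x_m$ in $X_0$, so the closed subspace $X_0$ contains $\overline{\text{span}}(x_i:i\in\N)=X$ by the reconstruction formula. Your extra remark justifying $\|y|_{\text{span}(f_i:i\ge n)}\|\le\|y\|$ via the canonical embedding is a correct (if implicit in the paper) bookkeeping point.
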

\begin{proof}
If $(x_k)\subset X_0$ with $x_k\to x$ in $X$, then given any
$\vp>0$, there are $k_0$ with $\|x-x_{k_0} \|\le \vp$, and
$n_0\in\N$ such that for all $n\ge n_0$,
$$\|x|_{\textrm{span}(f_i:i\geq n)}\| \le \|x-x_{k_0}
\|+\|x_{k_0}|_{\textrm{span}(f_i:i\geq n)}\|\le 2\vp,$$ which
 implies that $x\in X_0.$

If $(x_i,f_i)$ is locally boundedly complete, then $x_i\in X_0$ for
all $i\in\N$. It follows that
$X=\overline{\textrm{span}}(x_i:i\in\N)\subset X_0$. Thus, we
complete the proof.
\end{proof}

\begin{prop}\label{P:2.5b} Let $(x_i,f_i)$  be a frame of a Banach space
$X$. Then the space
$$Y=\Big\{f\in X^*: f=\|\cdot\|-\lim_{n\to\infty}\sum_{i=1}^n f(x_i) f_i\Big\},$$
is a norm closed subspace of $X^*$. Moreover, if $(x_i,f_i)$ is
locally shrinking, then $$Y=\overline{\mathrm{span}}(f_i:i\in\N),$$
and, thus, $(f_i,x_i)$ is a frame for $Y$.
\end{prop}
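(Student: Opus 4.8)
The plan is to prove three things in order: first that $Y$ is a linear subspace, second that it is norm-closed, and third the ``moreover'' statement, which is where the real content lies.

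Linearity of $Y$ is immediate from linearity of the finite-rank partial-sum operators $f \mapsto \sum_{i=1}^n f(x_i) f_i$ together with linearity of the limit; no estimate is needed. For closedness, I would run the same $\varepsilon$-argument already used in the proof of Proposition~\ref{self} (for $X_0$), but now on the dual side. Suppose $g_k \in Y$ with $g_k \to f$ in $\|\cdot\|$. Fix $\varepsilon>0$, choose $k_0$ with $\|f - g_{k_0}\| \le \varepsilon$, and for $m \le n$ estimate
$$
\Big\| \sum_{i=m}^n f(x_i) f_i \Big\| \le \Big\| \sum_{i=m}^n (f-g_{k_0})(x_i) f_i \Big\| + \Big\| \sum_{i=m}^n g_{k_0}(x_i) f_i \Big\| \le K\,\|f - g_{k_0}\| + \Big\| \sum_{i=m}^n g_{k_0}(x_i) f_i \Big\|,
$$
where the first term is controlled by Proposition~\ref{basic.pp1}(c), inequality~\eqref{E:2.5a.1}, and the second tends to $0$ as $m,n\to\infty$ because $g_{k_0} \in Y$. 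Hence the partial sums of $\sum f(x_i) f_i$ form a Cauchy sequence, the series converges in norm, and by Proposition~\ref{basic.pp1}(b) its sum is the $w^*$-limit, namely $f$ itself; so $f \in Y$.

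For the ``moreover'' part, assume $(x_i,f_i)$ is locally shrinking. One inclusion is easy: each $f_m$ lies in $Y$ (its expansion is the eventually-constant series with only the $m$-th term, using $f_j(x_i)=\delta$-like behaviour is NOT available, so instead note directly that $\sum_{i=1}^n f_m(x_i) f_i \to f_m$ in norm can be read off once we know $f_m \in Y$ — better: observe that $Y$ is norm-closed and $w^*$-contains the tail expansions, but the cleanest route is to show $\overline{\mathrm{span}}(f_i) \subseteq Y$ by first checking $f_m \in Y$). Concretely, $f_m \in Y$ because $\sum_{i > n} f_m(x_i) f_i$ has norm at most $K \, \|f_m|_{\mathrm{span}(x_i : i \ge n+1)}\|$ by~\eqref{E:2.5a.2}, which tends to $0$ by local shrinking; so the partial sums of $\sum f_m(x_i) f_i$ converge in norm, and the limit is $f_m$ by part~(b). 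Since $Y$ is a norm-closed subspace containing every $f_m$, we get $\overline{\mathrm{span}}(f_i : i\in\N) \subseteq Y$. For the reverse inclusion, any $f \in Y$ is by definition the norm-limit of $\sum_{i=1}^n f(x_i) f_i \in \mathrm{span}(f_i)$, so $f \in \overline{\mathrm{span}}(f_i)$. This gives $Y = \overline{\mathrm{span}}(f_i : i\in\N)$, and then $(f_i, x_i)$ is a frame of $Y$ by definition, since for every $f \in Y$ we have $f = \|\cdot\|\text{-}\lim_n \sum_{i=1}^n f(x_i) f_i$.

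The main obstacle — really the only subtle point — is making sure that the norm-convergent sum one constructs actually equals $f$ rather than merely some element of $X^*$; this is exactly where Proposition~\ref{basic.pp1}(b) is invoked, since the $w^*$-limit of $\sum_{i=1}^n f(x_i) f_i$ is always $f$, and a norm-convergent sequence that is $w^*$-convergent to $f$ must converge in norm to $f$. Everything else is a routine repetition of the $X_0$ argument transported to the dual, combined with the estimate~\eqref{E:2.5a.2} that converts the local-shrinking hypothesis into norm control of the tails.
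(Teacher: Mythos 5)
Your proposal is correct and follows essentially the same route as the paper: closedness via the estimate $\bigl\|\sum_{i=m}^n (f-g_{k_0})(x_i) f_i\bigr\|\le K\|f-g_{k_0}\|$ from (\ref{E:2.5a.1}) (the paper packages this as the equivalent norm $|\!|\!|\cdot|\!|\!|$, but it is the same bound), then identifying the norm limit with $f$ through the $w^*$-convergence in Proposition \ref{basic.pp1}(b), and for the ``moreover'' part using (\ref{E:2.5a.2}) with local shrinking to get $f_m\in Y$ and the definition of $Y$ for the reverse inclusion. Aside from the self-correcting aside about the $\delta$-like behaviour, which you rightly discard, nothing differs in substance from the paper's argument.
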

\begin{proof} First, define a new norm
$|\!|\!|\cdot|\!|\!|$ on $X^*$ as follows
$$|\!|\!|f|\!|\!|=\sup\limits_{m\le n}\Big\| \sum\limits_{i=m}^n
f(x_i) f_i\Big\| \quad \mbox{ for all } f\in X^*.$$  By Proposition
\ref{basic.pp1} (c) this is an equivalent norm of $(X^*,
\|\cdot\|)$. Thus, if $(g_k)\subset Y$ with $g_k\to g$ in $X^*$, it
follows that,
$$\lim_{k\to\infty}|\!|\!|g-g_k |\!|\!| =\lim_{k\to\infty}\sup_{m\le n}\Big\| \sum_{i=m}^n g(x_i) f_i -    \sum_{i=m}^n g_k(x_i)
  f_i\Big\|=0.$$
Thus, given any $\vp>0$, there are $k_0$ with $|\!|\!|g-g_{k_0}
|\!|\!|\le \vp$, and $m_0\in\N$ such that for all $n\ge m\ge m_0$,
$\Big\| \sum\limits_{i=m}^n g_{k_0}(x_i) f_i\Big\|\le\vp$, and thus,
  $$\Big\| \sum_{i=m}^n g(x_i) f_i\Big\|\le |\!|\!|g-g_{k_0} |\!|\!|+\Big\| \sum_{i=m}^n g_{k_0}(x_i) f_i\Big\|
   \le 2\vp,$$
which implies that $\sum\limits_{i=1}^\infty g(x_i) f_i$ converges.
By Proposition \ref{basic.pp1} (b), we get
$g=\sum\limits_{i=1}^\infty g(x_i) f_i\in Y$.

If $(x_i,f_i)$ is locally shrinking, it follows from Proposition
\ref{basic.pp1} (c) that  for all $i\in\N$, $f_i\in Y$. Hence
$\overline{\text{\rm span}}(f_i:i\in\N)\subset Y$. On the other hand, it
is clear from the definition of $Y$ that
$Y\subset\overline{\text{\rm span}}(f_i:i\in\N)$. Therefore,
$Y=\overline{\textrm{span}}(f_i:i\in\N).$
\end{proof}

\section{Associated Spaces}

\begin{defin}\label{ass.def} Let $(x_i,f_i)$ be
a frame of a Banach space $X$ and let $Z$ be a Banach space with a
basis $(z_i)$. We call $Z$ an {\em associated space to $(x_i,f_i)$}
and $(z_i)$ an \emph{associated basis}, if
\begin{align*}
S:Z\to X,\quad \sum a_i z_i\mapsto \sum  a_i x_i \ \ \text{ and } \
\ T:X\to Z,\quad x=\sum f_i(x) x_i\mapsto \sum  f_i(x) z_i,
\end{align*}
are bounded operators. We call $S$ the {\em associated
reconstruction operator} and $T$ the {\em associated decomposition
operator} or \emph{analysis operator}.
\end{defin}

\begin{remark}
If $(x_i,f_i)$ is a frame of a Banach space $X$ and $Z$ a
corresponding associated space with an associated basis $(z_i)$,
then (see \cite[Definition 2.1]{CHL} or \cite{Ch}) $(x_i,f_i)$ is an \emph{atomic
decomposition} of $X$ with respect to $Z$. In our paper, we will
mostly concentrate on frames and properties which are independent of
the associated spaces.
\end{remark}

\begin{prop}\label{P:2.7a}
Let $(x_i,f_i)$ be a frame of a Banach space $X$ and let $Z$ be an
associated space with an associated basis $(z_i)$. Let $S$ and $T$
be the associated reconstruction operator and the associated
decomposition operator, respectively.

Then $S$ is a surjection onto $T(X)$, and $T$ is an isomorphic
embedding from $X$ into $Z$. Moreover, for all $i\in\N$,
$S(z_i)=x_i$ and\, $T^*(z^*_i)=f_i$.
\end{prop}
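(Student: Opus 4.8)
The plan is to verify directly that $S$ and $T$ behave as claimed, using the frame reconstruction formula and the density of $\coo$-combinations. First I would observe that on the dense subspace $\{\sum_{i=1}^n a_i z_i : n\in\N,\ (a_i)\in\coo\}$ of $Z$ (dense since $(z_i)$ is a basis of $Z$) we have, for $x=\sum_i f_i(x)x_i\in X$, the identity $T S\big(\sum f_i(x) z_i\big)$-type relations; concretely, I would compute $S(z_i)$ by applying $S$ to the single basis vector $z_i$, which by definition sends $z_i = \sum_j \delta_{ij} z_j$ to $\sum_j \delta_{ij} x_j = x_i$. Thus $S(z_i)=x_i$ for all $i\in\N$. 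Next, for the composition $S\circ T$: given $x\in X$, the frame identity~(\ref{def.frame.eq1}) gives $x=\sum_i f_i(x)x_i$, and by definition $T(x)=\sum_i f_i(x) z_i\in Z$ (this series converges in $Z$ precisely because $T$ is assumed bounded), so applying $S$ and using its boundedness and linearity, $S(T(x))=\sum_i f_i(x) S(z_i)=\sum_i f_i(x) x_i = x$. Hence $S\circ T=\mathrm{id}_X$.

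From $S\circ T=\mathrm{id}_X$ the remaining structural claims follow quickly. Since $T$ has a bounded left inverse, $T$ is an isomorphic embedding of $X$ into $Z$: for all $x\in X$, $\|x\|=\|S(T(x))\|\le\|S\|\,\|T(x)\|$, so $\|T(x)\|\ge\|S\|^{-1}\|x\|$, while $\|T(x)\|\le\|T\|\,\|x\|$ gives the upper bound (here $\|S\|\neq 0$ since $S(z_i)=x_i\neq 0$ by our standing convention that frames are non-zero). That $S$ maps onto $T(X)$: clearly $S(T(X))=X$ by the identity just proved, so $T(X)\subset S(Z)$; conversely I would need $S(Z)\subset T(X)$, which I would get by noting that for any $z\in Z$ we have $S(z)\in X$, hence $S(z)=S(T(S(z)))$, so $S(z)=S(w)$ with $w=T(S(z))\in T(X)$ — thus every element of $S(Z)$ already lies in $S(T(X))=X=$ the image, and more precisely $S(Z)=S(T(X))$ shows $S$ restricted appropriately is a surjection onto its range $T(X)$ via the factorization $S=(S|_{T(X)})\circ(T\circ S)$; I would phrase this as: $S(Z) = S(T(X))$ because $T\circ S$ is a projection of $Z$ onto $T(X)$ (indeed $(TS)(TS)=T(ST)S=TS$), and $S$ agrees on $Z$ with $S\circ(TS)$ up to the kernel, giving $S(Z)=S(TS(Z))\subset S(T(X))\subset S(Z)$.

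For $T^*(z_i^*)=f_i$: for any $x\in X$ I would compute $\langle T^*(z_i^*), x\rangle = \langle z_i^*, T(x)\rangle = \langle z_i^*, \sum_j f_j(x) z_j\rangle = f_i(x)$, using that $(z_i^*)$ are the coordinate functionals biorthogonal to the basis $(z_i)$ and that $z_i^*$ is continuous on $Z$ (so it passes through the convergent series). Hence $T^*(z_i^*)=f_i$ as elements of $X^*$.

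The main obstacle is not any single computation — each step is short — but rather being careful about \emph{where} convergence takes place: the series $\sum f_i(x) z_i$ must be interpreted as a norm-convergent series in $Z$, and this is exactly guaranteed by the hypothesis that $T$ is a bounded (in particular well-defined) operator, so that one may legitimately apply the bounded linear maps $S$ and $z_i^*$ termwise. Once this is pinned down, the proof is a routine chase through the definitions.
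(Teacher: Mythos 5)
Your proposal is correct and follows essentially the same route as the paper: verify $S(z_i)=x_i$ and $T^*(z_i^*)(x)=z_i^*\bigl(\sum_j f_j(x)z_j\bigr)=f_i(x)$ termwise, and deduce everything else from the identity $S\circ T=\mathrm{id}_X$ obtained from the frame expansion. Your discussion of the ``surjection onto $T(X)$'' claim is more convoluted than necessary, but its substance---that $S\circ T=\mathrm{id}_X$, that $P=T\circ S$ is a bounded projection of $Z$ onto $T(X)=\{\sum f_i(x)z_i: x\in X\}$, and that $S$ maps this range onto $X$---is exactly what the paper's proof records.
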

\begin{proof}
Note that for any $x\in X$, it follows that
$$S\circ T(x)=S\circ T\Big(\sum f_i(x) x_i\Big)=S\Big(\sum f_i(x) z_i\Big)=
\sum f_i(x) x_i=x.$$ Therefore, $T$ must be an isomorphic embedding
and $S$ a surjection onto the space $T(X)=\Big\{ \sum f_i(x) z_i :
x\in X\Big\}$. And the map $P: Z\to Z$, $z\mapsto T\circ S(z)$ is a
projection onto $T(X)$. By Definition \ref{ass.def}, it is clear
that $S(z_i)=x_i$ for all $i\in\N$. Secondly, it follows that for
any $x\in X$ and $i\in\N$,
$$T^*(z^*_i)(x)=z^*_i \circ T\Big(\sum f_j(x) x_j\Big)= z^*_i\Big(\sum f_j(x) z_j\Big)= f_i(x),$$
and thus, $T^*(z^*_i)=f_i$, which completes our claim.
\end{proof}

We now introduce the notion of minimal bases.

\begin{defin}\label{minimal.def}
Let $(x_i)$  be a  non zero sequence in a Banach space $X$.

Define a norm on $\coo$ as follows
\begin{equation}\label{minimal.def.eq1}
\Big\|\sum a_i e_i\Big\|_{Min}=\max_{m\le n} \Big\|\sum_{i=m}^n a_i
x_i\Bigr\|_X \quad \mbox{ for all } \sum a_i e_i\in\coo,
\end{equation}
Denote by $Z_{Min}$ the completion of $c_{00}$ endowed with the norm
$\|\cdot\|_{Min}$. It is easy to prove that $(e_i)$, denoted by
$(e_i^{Min})$, is a bi-monotone basis of $Z_{Min}$. By the following
Theorem \ref{minimal.1} (b), we call $Z_{Min}$ and $(e_i^{Min})$ the
\emph{minimal space} and the \emph{minimal basis with respect to
$(x_i)$}, respectively.

Note that the operator:
$$S_{Min}:Z_{Min}\to X, \quad \sum a_i z_i\mapsto \sum a_i z_i,$$
is linear and bounded with $\|S_{Min}\|=1$

If $(x_i,f_i)$ is a frame the {\em minimal space } (or the {\em
minimal basis) with respect to $(x_i,f_i)$} is the  minimal space
(or the minimal basis) with respect to $(x_i)$.
\end{defin}

As the following result from \cite[Theorem 2.6]{CHL} shows,
associated spaces
       always exist.
\begin{thm}\cite[Theorem 2.6]{CHL}\label{minimal.1}
Let $(x_i,f_i)$ be a frame of a Banach space $X$ and let $Z_{Min}$
be the minimal space with the minimal basis $(e_i^{Min})$.
\begin{enumerate}
\item[(a)]
$Z_{\textrm{Min}}$ is an associated space to $(x_i,f_i)$ with the
associated basis $(e_i^{Min})$.
\item[(b)]
For any associated space $Z$ with an associated basis $(z_i)$,
$(e_i^{Min})$ is dominated by $(z_i)$.
\end{enumerate}

Thus, we will call $Z_{Min}$ and $(e_i^{Min})$ the \emph{minimal
associated space} and the \emph{minimal associated basis} to
$(x_i,f_i)$, respectively.

\end{thm}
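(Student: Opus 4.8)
The plan is to verify, for part (a), the two boundedness requirements of Definition~\ref{ass.def}, and for part (b), to push an arbitrary block of $(x_i)$ through a general reconstruction operator.

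For part (a), observe first that the reconstruction operator attached to $Z_{Min}$ is exactly the map $S_{Min}$ of Definition~\ref{minimal.def}, which is already known to be linear and bounded with $\|S_{Min}\|=1$; so the real work is to show that the decomposition operator $T\colon X\to Z_{Min}$, sending $x=\sum f_i(x)x_i$ to $\sum f_i(x)e_i^{Min}$, is well defined and bounded. I would first check that the series $\sum_i f_i(x)e_i^{Min}$ converges in $Z_{Min}$. By the definition of $\|\cdot\|_{Min}$, for $m\le n$ one has
\[
\Bigl\|\sum_{i=m}^n f_i(x)e_i^{Min}\Bigr\|_{Min}=\max_{m\le p\le q\le n}\Bigl\|\sum_{i=p}^q f_i(x)x_i\Bigr\|_X ,
\]
and since $(x_i,f_i)$ is a frame the partial sums $\sum_{i=1}^N f_i(x)x_i$ are norm-Cauchy in $X$, which forces the right-hand side to $0$ as $m,n\to\infty$; hence the series is Cauchy, and convergent, in $Z_{Min}$. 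Taking $m=1$ in the same identity gives
\[
\|T(x)\|_{Min}=\sup_{m\le n}\Bigl\|\sum_{i=m}^n f_i(x)x_i\Bigr\|_X\le K\|x\| ,
\]
where $K$ is the projection constant of $(x_i,f_i)$ from Proposition~\ref{basic.pp1}(a)(i). Thus $Z_{Min}$ is an associated space, and $(e_i^{Min})$ — already a bi-monotone basis of $Z_{Min}$ by Definition~\ref{minimal.def} — is an associated basis.

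For part (b), let $Z$ be any associated space with associated basis $(z_i)$ and reconstruction operator $S\colon Z\to X$ (so $S(z_i)=x_i$ by Proposition~\ref{P:2.7a}), and let $\lambda$ be the basis constant of $(z_i)$ in $Z$, so that the canonical block projections $\sum a_i z_i\mapsto\sum_{i=m}^n a_i z_i$ have norm at most $2\lambda$. Fix $(a_i)\in\coo$. For every $m\le n$ I would write $\sum_{i=m}^n a_i x_i=S\bigl(\sum_{i=m}^n a_i z_i\bigr)$, whence
\[
\Bigl\|\sum_{i=m}^n a_i x_i\Bigr\|_X\le\|S\|\,\Bigl\|\sum_{i=m}^n a_i z_i\Bigr\|_Z\le 2\lambda\|S\|\,\Bigl\|\sum_i a_i z_i\Bigr\|_Z .
\]
Taking the maximum over $m\le n$ yields $\bigl\|\sum_i a_i e_i^{Min}\bigr\|_{Min}\le 2\lambda\|S\|\,\bigl\|\sum_i a_i z_i\bigr\|_Z$, i.e.\ $(e_i^{Min})$ is $C$-dominated by $(z_i)$ with $C=2\lambda\|S\|$.

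The only step that requires any care is the convergence claim in part (a): one must exploit the particular ``maximum over blocks'' shape of $\|\cdot\|_{Min}$ to transfer the Cauchy property of the frame expansion in $X$ into a Cauchy property in $Z_{Min}$. Everything else reduces to one-line estimates using only the boundedness of $S$ (resp.\ $S_{Min}$), the projection constant, and the basis constant of $(z_i)$.
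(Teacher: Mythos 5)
Your proposal is correct and follows essentially the same route as the paper's (sketched) proof: boundedness of $T_{Min}$ with $\|T_{Min}\|\le K$ via the projection constant for part (a), and pushing interval blocks through $S$ using $S(z_i)=x_i$ and the basis/projection constant of $(z_i)$ for part (b). You merely fill in the convergence detail for the series $\sum f_i(x)e_i^{Min}$ that the paper leaves implicit, which is done correctly.
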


We give a sketch of the proof.
\begin{proof} (a) Let $K$ be the projection constant of $(x_i,f_i)$. It follows that
  the map $T_{Min}: X\to Z_{Min}$
defined by
$$T_{Min}: X\to Z_{Min}, \quad x=\sum f_i(x) x_i\mapsto \sum f_i(x)
e_i^{Min},$$ is well-defined, linear and bounded and $\|T\|\le K$.
As already noted in Definition \ref{minimal.def} , the operator $S_{Min}: Z\to X$ is
linear and bounded.

(b) If $Z$ is an associated space with an associated basis $(z_i)$
and $S: Z\to X$ is the corresponding associated reconstruction
operator, then it follows that for any $(a_i)\in\coo$,
\begin{eqnarray}\label{minimal.remark1.eq1}
\Big\|\sum a_i e_i^{Min}\Big\|&=&\max_{m\le n}\Big\|\sum_{i=m}^n a_i
x_i\Big\|=\max_{m\le n}\Big\|\sum_{i=m}^n a_i S(z_i)\Big\|\\
&\le& \|S\| \max_{m\le n}\Big\|\sum_{i=m}^n a_i z_i\Big\|\le
K_Z\|S\| \Big\|\sum a_i z_i\Big\|,\nonumber
\end{eqnarray}
where $K_Z$ is the projection constant of $(z_i)$.
\end{proof}

Next we introduce the notion of the maximal space and the maximal
basis.
\begin{defin}\label{max.def}
Let $(x_i,f_i)$ be a frame of a Banach space $X$.

Define a norm on $\coo$ as follows
\begin{equation}\label{max.def.eq1}
\Big\|\sum a_i e_i\Big\|_{Max}=\sup_{\stackrel{(b_i)\in
c_{00}}{\max\limits_{m\leq n}\|\sum\limits_{i=m}^n b_i f_i\|\leq
1}}\Big|\sum a_i b_i\Big| \quad \mbox{ for all } \sum a_i
e_i\in\coo.
\end{equation}
Denote by $Z_{Max}$ the completion of $c_{00}$ under
$\|\cdot\|_{Max}$. Clearly, $(e_i)$ is a bi-monotone basis of
$Z_{Max}$, which will be denoted by $(e_i^{Max})$. We  call
$Z_{Max}$ and $(e_i^{Max})$ the \emph{maximal space} and the
\emph{maximal basis with respect to $(x_i,f_i)$}, respectively.
\end{defin}

\begin{thm}\label{associated.maximal.pp1}
Let $(x_i, f_i)$ be a frame of a Banach space $X$

and let $Z_{Max}$ be the maximal space with the maximal basis
$(e_i^{Max})$.
\begin{enumerate}
\item[(a)] If $Z$ is an associated space with an associated
basis $(z_i)$, then $(e_i^{Max})$ dominates $(z_i)$.
\item[(b)] The mapping
\begin{equation}\label{max.S.eq}
S_{Max}:Z_{Max}\to X,\quad z=\sum a_i e_i^{Max}\mapsto \sum  a_i
x_i,
\end{equation}
is well-defined, linear and bounded.
\item[(c)] If $(x_i,f_i)$ is
locally boundedly complete, then $Z_{\textrm{Max}}$ is an associated
space to $(x_i,f_i)$ with the associated basis $(e_i^{Max})$.

In this case, we call $Z_{Max}$ and $(e_i^{Max})$ the \emph{maximal
associated space} and the \emph{maximal associated basis to $(x_i,
f_i)$}.
\end{enumerate}

\end{thm}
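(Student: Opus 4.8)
\textbf{Proof proposal for Theorem \ref{associated.maximal.pp1}.}

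The plan is to establish the three parts in the order (a), (b), (c), since (b) will follow quickly from the norm estimate proved in (a), and (c) will require combining (b) with the local bounded completeness hypothesis plus Proposition \ref{P:2.5b}. For part (a), fix an associated space $Z$ with associated basis $(z_i)$, let $S\colon Z\to X$ be the reconstruction operator and recall from Proposition \ref{P:2.7a} that $T^*(z_i^*)=f_i$, where $T\colon X\to Z$ is the analysis operator. The key point is to show that for every $(a_i)\in\coo$ one has $\|\sum a_i z_i\|_Z \le C\|\sum a_i e_i^{Max}\|_{Max}$ for a suitable constant $C$ depending only on $\|T\|$ and the projection constant $K_Z$ of $(z_i)$. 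To see this, start from $\|\sum a_i z_i\|_Z = \sup\{ |\sum a_i b_i| : (b_i)\in\coo,\ \|\sum b_i z_i^*\|_{Z^*}\le 1\}$ (using that $(z_i^*)$ is total over $Z$ and a standard duality argument with the biorthogonal functionals), and then push each testing sequence $(b_i)$ over to $X^*$ via $T^*$: since $\sum b_i f_i = \sum b_i T^*(z_i^*) = T^*(\sum b_i z_i^*)$, we get $\max_{m\le n}\|\sum_{i=m}^n b_i f_i\| \le \|T^*\|\, K_Z\, \|\sum b_i z_i^*\|_{Z^*} \le \|T\|\,K_Z$. Dividing through, the rescaled sequence $(b_i/(\|T\|K_Z))$ is admissible in the supremum defining $\|\cdot\|_{Max}$, which yields $\|\sum a_i z_i\|_Z \le \|T\|\, K_Z\, \|\sum a_i e_i^{Max}\|_{Max}$. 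This is exactly the claim that $(e_i^{Max})$ dominates $(z_i)$.

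For part (b), apply (a) with $Z = Z_{Min}$ and $(z_i) = (e_i^{Min})$, which is a legitimate associated space by Theorem \ref{minimal.1}(a). Domination of $(e_i^{Min})$ by $(e_i^{Max})$ says precisely that the formal identity map $Z_{Max}\to Z_{Min}$ is bounded; composing with the bounded map $S_{Min}\colon Z_{Min}\to X$ (which has norm $1$ by Definition \ref{minimal.def}) gives that $S_{Max} = S_{Min}\circ(\text{identity})$ is well-defined, linear and bounded on all of $Z_{Max}$, with $\|S_{Max}\| \le \|T\|\,K_{Z_{Min}}$. One should also remark that on $\coo$ this composite agrees with $\sum a_i e_i^{Max}\mapsto \sum a_i x_i$, so the formula (\ref{max.S.eq}) indeed defines it.

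For part (c), assuming $(x_i,f_i)$ is locally boundedly complete, it remains to produce a bounded analysis operator $T_{Max}\colon X\to Z_{Max}$, $x\mapsto \sum f_i(x) e_i^{Max}$; together with (b) this makes $Z_{Max}$ an associated space. The natural approach is to compute $\|\sum_{i=1}^n f_i(x) e_i^{Max}\|_{Max}$ directly from the definition: it equals $\sup\{ |\sum_{i=1}^n f_i(x) b_i| : (b_i)\in\coo,\ \max_{m\le k}\|\sum_{i=m}^k b_i f_i\|\le 1\}$, and for such an admissible $(b_i)$ one estimates $|\sum_{i=1}^n f_i(x) b_i| = |(\sum_{i=1}^n b_i f_i)(x)| \le \|x\|$ — wait, this only works if $\sum_{i=1}^n b_i f_i$ applied to $x$ can be controlled, and the admissibility bound gives $\|\sum_{i=1}^n b_i f_i\|\le 1$ only when $1\le n$ is inside the block. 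The honest statement is: for $(b_i)$ with $\max_{m\le k}\|\sum_{i=m}^k b_i f_i\|\le 1$, truncating to the first $n$ coordinates still satisfies $\|\sum_{i=1}^n b_i f_i\|\le 1$, hence $|\sum_{i=1}^n f_i(x) b_i| = |(\sum_{i=1}^n b_i f_i)(x)|\le \|x\|$, giving $\|T_{Max}^{(n)}x\|_{Max}\le \|x\|$ uniformly in $n$. The remaining and genuinely substantive point — where local bounded completeness enters — is to show the partial sums $\sum_{i=1}^n f_i(x)e_i^{Max}$ actually \emph{converge} in $Z_{Max}$, i.e.\ form a Cauchy sequence; for this one shows $\|\sum_{i=m}^n f_i(x)e_i^{Max}\|_{Max} \le \|x|_{\mathrm{span}(f_i:i\ge m)}\|$ by the same duality computation restricted to tails, and then invokes $\|x|_{\mathrm{span}(f_i:i\ge m)}\|\to 0$, which holds for every $x\in X = X_0$ by Proposition \ref{self}. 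Thus $T_{Max}$ is well-defined and bounded with $\|T_{Max}\|\le 1$, completing the proof. I expect this convergence step in (c) to be the main obstacle, since it is the only place the hypothesis is used and it requires carefully matching the tail norm in $Z_{Max}$ against the restriction norm $\|x|_{\mathrm{span}(f_i:i\ge m)}\|$ that Proposition \ref{self} controls.
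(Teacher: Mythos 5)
Your proposal is correct and follows essentially the same route as the paper: part (a) by dualizing $\sum a_i z_i$ against the coordinate functionals and transporting the test sequences via $T^*(z_i^*)=f_i$, part (b) by applying (a) to the minimal associated space $Z_{Min}$ and composing with $S_{Min}$, and part (c) by the tail estimate $\bigl\|\sum_{i=l}^r f_i(x)e_i^{Max}\bigr\|\le \|x|_{\mathrm{span}(f_i:i\ge l)}\|$ together with Proposition \ref{self}. The only nitpick is that your opening duality identity in (a) should be an inequality up to the projection constant $K_Z$ rather than an equality (a functional in $B_{Z^*}$ need not have its finite truncations $\sum b_i z_i^*$ in $B_{Z^*}$), which is exactly why the paper's constant is $K_Z^2\|T^*\|$; this affects only the constant, not the domination.
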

\begin{proof} (a)  Let $Z$ be an associated space with an associated
basis $(z_i)$, $(z_i^*)$ is the corresponding coordinate
functionals, and  let $T:X\to Z$ be the associated decomposition operator.
By Proposition \ref{P:2.7a}
$T^*(z_i^*)=f_i$,  for all $i\in\N$. Thus, for any $(a_i)\in c_{00}$, we have
\begin{eqnarray}\label{max.eq2}
\Big\|\sum a_i z_i\Big\|&\leq& K_Z \sup_{\stackrel{(b_i)\in
c_{00}}{\|\sum b_i z_i^*\|\leq 1}}\Big|\Big\langle\sum a_i z_i, \sum
b_i z_i^* \Big\rangle\Big| \\
&\leq& K^2_Z \sup_{\stackrel{(b_i)\in c_{00}}{\max\limits_{m\leq
n}\|\sum\limits_{i=m}^n
b_i z_i^*\|\leq 1}}\Big|\sum a_i b_i\Big|\nonumber\\
&\leq& K^2_Z \sup_{\stackrel{(b_i)\in c_{00}}{\max\limits_{m\leq
n}\|T^*(\sum\limits_{i=m}^n b_i z_i^*)\|\leq \|T^*\|}}\Big|\sum a_i
b_i\Big| \nonumber\\
&\leq& K^2_Z \ \ \|T^*\| \sup_{\stackrel{(b_i)\in
c_{00}}{\max\limits_{m\leq n}\|\sum\limits_{i=m}^n b_i f_i\|\leq
1}}\Big|\sum a_i b_i\Big|
\leq K^2_Z \ \ \|T^*\| \ \ \Big\| \sum a_i e_i^{Max}
\Big\|,\nonumber
\end{eqnarray}
where $K_Z$ is the projection constant of $(z_i, z_i^*)$.

(b) Let $(Z_{Min}, (e_i^{Min}))$ be the minimal space to $(x_i,f_i)$
and by Theorem \ref{minimal.1} (a) let $T_{Min}: X\to Z_{Min}$ be
the corresponding associated decomposition operator. Then by
(\ref{max.eq2}), for any $(a_i)\in c_{00}$, we have
\begin{eqnarray}\label{max.eq4}
\max_{m\leq n}\Big\|\sum_{i=m}^na_i x_i\Big\|&=& \Big\|\sum a_i
e_i^{Min}\Big\|\leq C \Big\|\sum a_i e_i^{Max}\Big\|,
\end{eqnarray}
where $C=K^2_{Min} \, \|T^*_{Min}\|$ and $K_{Min}$ is the projection
constant of $(e_i^{Min})$. Thus, the map $S_{Max} :
Z_{Max}\rightarrow X$ with $S_{Max}(e_i^{Max})=x_i$, for $i\in\N$, is
well defined, linear and bounded with $\|S_{Max}\|\leq K^2_{Min}
\|T^*_{Min}\|$.

(c) If $(x_i,f_i)$ is locally boundedly complete, then
 for any $x\in X$ and $l\leq r$, we have
\begin{eqnarray*}
\Big\|\sum_{i=l}^r f_i(x) e_i^{Max}\Big\|=\sup_{\stackrel{(b_i)\in
c_{00}}{\max\limits_{m\leq n}\|\sum\limits_{i=m}^n b_i f_i\|\leq
1}}\Big|\sum_{i=l}^r b_i f_i(x)\Big|\leq \|x|_{\text{\rm span}(f_i : i\geq
l)}\|,
\end{eqnarray*}
which by Proposition \ref{self}, tends to zero as $l\to\infty$. Thus, the map
\begin{equation}
T_{Max} : X\rightarrow Z_{Max}, \ \ x=\sum f_i(x) x_i \mapsto \sum
f_i(x) e_i^{Max},
\end{equation}
is well-defined, linear and bounded with $\|T_{Max}\|\le1$, which
completes our proof.
\end{proof}

The following result emphases that for every frame, that
 associated bases dominate $(e^{Min}_i)$ and are
 dominated by $(e^{Max}_i)$.

\begin{cor}
Let $(x_i,f_i)$ be a frame of a Banach space $X$. Assume that
$(e_i^{Min})$ and $(e_i^{Max})$ is the minimal basis and the maximal
basis with respect to $(x_i,f_i)$, respectively. Then for any
associated space $Z$ with an associated basis $(z_i)$, there are
$C_1,C_2>0$ such that
\begin{equation}
C_1 \Big\|\sum a_i e_i^{Min}\Big\|\leq \Big\|\sum a_i z_i\Big\|\leq
C_2 \Big\|\sum a_i e_i^{Max}\Big\| \quad \mbox{for all \,$(a_i)\in
c_{00}$}.
\end{equation}
\end{cor}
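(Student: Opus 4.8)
The plan is to simply combine the two theorems already proved in this section, namely Theorem~\ref{minimal.1}(b) and Theorem~\ref{associated.maximal.pp1}(a), since the corollary is nothing more than their conjunction written out with explicit constants. First I would recall that Theorem~\ref{minimal.1}(b) asserts that for any associated space $Z$ with associated basis $(z_i)$, the minimal basis $(e_i^{Min})$ is dominated by $(z_i)$; unwinding the definition of domination, this means there is a constant $D_1>0$ with $\|\sum a_i e_i^{Min}\|\le D_1\|\sum a_i z_i\|$ for all $(a_i)\in c_{00}$. Setting $C_1=1/D_1$ then gives the left-hand inequality $C_1\|\sum a_i e_i^{Min}\|\le\|\sum a_i z_i\|$. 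In fact, inspecting the proof of Theorem~\ref{minimal.1}(b), the inequality~(\ref{minimal.remark1.eq1}) shows one may take $D_1=K_Z\|S\|$, so $C_1=(K_Z\|S\|)^{-1}$ works explicitly, where $S$ is the associated reconstruction operator and $K_Z$ the projection constant of $(z_i)$.

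For the right-hand inequality I would invoke Theorem~\ref{associated.maximal.pp1}(a), which states that $(e_i^{Max})$ dominates $(z_i)$; again unwinding the definition, this gives a constant $C_2>0$ such that $\|\sum a_i z_i\|\le C_2\|\sum a_i e_i^{Max}\|$ for all $(a_i)\in c_{00}$. The explicit value is read off from~(\ref{max.eq2}): one may take $C_2=K_Z^2\,\|T^*\|$, where $T$ is the associated decomposition operator for $(z_i)$ and $K_Z$ is the projection constant of $(z_i,z_i^*)$. Concatenating the two inequalities yields $C_1\|\sum a_i e_i^{Min}\|\le\|\sum a_i z_i\|\le C_2\|\sum a_i e_i^{Max}\|$ for all finitely supported scalar sequences, which is exactly the claim.

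There is essentially no obstacle here: the corollary is a bookkeeping statement collecting the extremality of the minimal and maximal bases among all associated bases. The only minor point worth being careful about is that Theorem~\ref{associated.maximal.pp1} is stated without any hypothesis for part~(a) — the domination $(e_i^{Max})\succeq(z_i)$ holds for \emph{every} associated space, not merely when $(x_i,f_i)$ is locally boundedly complete (the local bounded completeness is only needed in part~(c) to guarantee that $Z_{Max}$ is itself an associated space). Since the corollary quantifies over an arbitrary associated space $Z$ that is assumed to exist, both ingredients apply verbatim, and one should phrase the proof so as not to accidentally suggest that $Z_{Max}$ or $Z_{Min}$ needs to be associated — only the given $Z$ does. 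Thus the write-up is three or four lines: cite Theorem~\ref{minimal.1}(b) for the lower bound with $C_1=(K_Z\|S\|)^{-1}$, cite Theorem~\ref{associated.maximal.pp1}(a) for the upper bound with $C_2=K_Z^2\|T^*\|$, and chain them.
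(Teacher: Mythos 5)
Your proof is correct and is exactly the argument the paper intends: the corollary is stated without proof precisely because it is the immediate conjunction of Theorem~\ref{minimal.1}(b) and Theorem~\ref{associated.maximal.pp1}(a), which is what you do, and your explicit constants $C_1=(K_Z\|S\|)^{-1}$ and $C_2=K_Z^2\|T^*\|$ are read off correctly from (\ref{minimal.remark1.eq1}) and (\ref{max.eq2}). Your side remark that part (a) of Theorem~\ref{associated.maximal.pp1} needs no local bounded completeness (only part (c) does) is also accurate.
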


\section{Applications of frames to duality theory}

The following  results extend James'  work on shrinking  and boundedly complete bases \cite{Ja} from
  to frames. Theorem \ref{shrinking.1} obviously yields  Theorem A and
  Theorem \ref{cor.bdd} implies Theorem B.

\begin{thm}\label{shrinking.1}
Let $(x_i,f_i)$ be a Schauder frame of a Banach space $X$. Assume
that $Z_{Min}$ and $(e_i^{Min})$ are the minimal space and minimal
basis with respect to $(x_i,f_i)$, respectively.

Then the following conditions are equivalent.
\begin{enumerate}
\item[a)]  Every normalized block sequence of $(x_i)$ is weakly null.
\item[b)]
\begin{enumerate}
\item[i)]
$(x_i,f_i)$ is locally shrinking.
\item[ii)]
If $(u_n)\subset B_X$ with $\lim\limits_{n\to\infty} f_m(u_n)=0$
 for all $m\in\N$, then $(u_n)$ is weakly null.
 \end{enumerate}
\item[c)] \ \ $(x_i,f_i)$ is locally shrinking and pre-shrinking.
\item[d)]
\begin{enumerate}
\item[i)]
$(x_i,f_i)$ is locally shrinking.
\item[ii)] $X^*=\overline{\text{\rm span}}(f_i:i\in\N)$.
 \end{enumerate}
 \item[e)]
\begin{enumerate}
\item[i)]
$(x_i,f_i)$ is locally shrinking.
\item[ii)] $(e_i^{Min})$ is a shrinking basis of $Z_{Min}$.
\end{enumerate}
\end{enumerate}
\end{thm}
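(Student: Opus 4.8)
The plan is to prove the equivalences cyclically, roughly in the order $a)\Rightarrow b)\Rightarrow c)\Rightarrow d)\Rightarrow e)\Rightarrow a)$, exploiting the fact that all of $c),d),e)$ explicitly include ``$(x_i,f_i)$ is locally shrinking'' and that $b)i)$ does too, so the local-shrinking hypothesis of Theorem A, Theorem B, and the propositions of Section 2 is always at our disposal once we are past the first implication. I would isolate the real content in the step $a)\Rightarrow b)i)$, where local shrinking has to be produced from a purely geometric hypothesis on blocks.

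\textbf{The main obstacle: $a)\Rightarrow b)i)$.} Suppose $a)$ holds but for some fixed $m$ we have $\|f_m|_{\operatorname{span}(x_i:i\ge n)}\|\not\to 0$; then there is $\delta>0$ and vectors $z_n\in\operatorname{span}(x_i:i\ge n)$ with $\|z_n\|\le K$ and $f_m(z_n)\ge\delta$ for infinitely many $n$. By passing to a subsequence and using the frame expansion $z_n=\sum_i f_i(z_n)x_i$ together with the projection constant $K$ (Proposition \ref{basic.pp1}(a)), one can replace each $z_n$ by a genuinely finitely-supported vector $w_n=\sum_{i\in F_n}f_i(z_n)x_i$ with $\min F_n\to\infty$, $\|w_n\|$ bounded below and above, and $f_m(w_n)$ still bounded away from $0$ (this uses that $f_m$ is continuous and that the tail sums $\sum_{i> N}f_i(z_n)x_i$ are small uniformly — here a gliding-hump argument choosing the $F_n$ successively disjoint is the key maneuver). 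After a further gliding hump we get a genuine block sequence $(v_k)$ of $(x_i)$ with $0<\inf\|v_k\|\le\sup\|v_k\|<\infty$ and $f_m(v_k)\ge\delta'>0$ for all $k$; normalizing, the normalized block sequence $v_k/\|v_k\|$ does not tend weakly to $0$ because $f_m$ does not vanish on it, contradicting $a)$. Thus $a)$ forces local shrinking. The delicate point throughout is keeping $f_m(w_n)$ uniformly bounded below while pushing the supports to infinity and while replacing $z_n$ by finitely supported approximants; this is where I expect to spend the most care.

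\textbf{The remaining implications are comparatively soft.} For $a)\Rightarrow b)ii)$: if $(u_n)\subset B_X$ with $f_m(u_n)\to 0$ for every $m$ but $(u_n)$ is not weakly null, pick $g\in X^*$, a subsequence, and $\varepsilon>0$ with $g(u_n)\ge\varepsilon$; using the frame expansions $u_n=\sum_i f_i(u_n)x_i$, the projection constant, and $f_m(u_n)\to0$ for each $m$, perform a gliding-hump/small-perturbation argument to replace $(u_n)$ by a normalized block sequence of $(x_i)$ on which $g$ is still $\ge\varepsilon/2$, contradicting $a)$. For $b)\Rightarrow c)$: local shrinking is $b)i)$; for pre-shrinking we must show $(f_i,x_i)$ is a frame of $X^*$, i.e. $f=\|\cdot\|\text{-}\lim\sum_{i\le n}f(x_i)f_i$ for every $f\in X^*$. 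By Proposition \ref{P:2.5b}, local shrinking gives $Y=\overline{\operatorname{span}}(f_i)$, so it suffices to show $\overline{\operatorname{span}}(f_i)=X^*$; if not, pick $f\in S_{X^*}$ at distance $\ge 1/2$ from $Y$ and use a standard argument producing $u_n\in B_X$ with $f_m(u_n)\to 0$ for all $m$ (so $u_n$ is ``asymptotically annihilated'' by $\operatorname{span}(f_i)$) yet $f(u_n)\not\to0$, which violates $b)ii)$. For $c)\Rightarrow d)$: pre-shrinking together with Proposition \ref{norming} and Proposition \ref{P:2.5b} gives directly $X^*=Y=\overline{\operatorname{span}}(f_i)$, and local shrinking is carried along. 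For $d)\Rightarrow e)$: under local shrinking, by Proposition \ref{P:2.7a} the minimal decomposition operator $T_{Min}$ embeds $X$ into $Z_{Min}$ with $T_{Min}^*(e_i^{Min,*})=f_i$, so $d)ii)$ says the functionals biorthogonal to $(e_i^{Min})$ have images spanning a dense subspace of $X^*$; combined with the fact that $T_{Min}^*$ maps $\overline{\operatorname{span}}(e_i^{Min,*})$ onto (a dense subspace of) $X^*=\operatorname{ran}(T_{Min})^*$, this forces $\overline{\operatorname{span}}(e_i^{Min,*})=(Z_{Min})^*$ on the subspace $T_{Min}(X)$ and then on $Z_{Min}$ by a routine projection/duality argument using that $(e_i^{Min})$ is a basis — that is exactly the statement that $(e_i^{Min})$ is shrinking. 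Finally $e)\Rightarrow a)$: if $(e_i^{Min})$ is shrinking, every normalized block of $(e_i^{Min})$ is weakly null in $Z_{Min}$; since $S_{Min}\colon Z_{Min}\to X$ is bounded with $S_{Min}(e_i^{Min})=x_i$, and local shrinking lets us compare $X$-norms with $Z_{Min}$-norms on blocks, a normalized block of $(x_i)$ lifts (up to a bounded perturbation and normalization) to a block of $(e_i^{Min})$ whose image under the weak$^*$-continuous-ish machinery of $S_{Min}$ is weakly null in $X$; more cleanly, any $g\in X^*$ pulls back to $S_{Min}^*g\in Z_{Min}^*$, and weak nullity of the $Z_{Min}$-blocks gives $g$ vanishing in the limit on the $X$-blocks, so $a)$ holds.
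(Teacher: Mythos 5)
The decisive gap is in your step d)$\Rightarrow$e), which you dismiss as ``a routine projection/duality argument''. In your cyclic scheme this implication has to carry essentially all of the hard content of the theorem: d)$\Rightarrow$a) is soft (the set $\{f\in X^*:\|f|_{\mathrm{span}(x_i:i\ge n)}\|\to0\}$ is closed, contains each $f_i$ by local shrinking, hence is all of $X^*$ by d)(ii)), so d)$\Rightarrow$e)(ii) is at least as strong as the implication ``a) $\Rightarrow$ $(e_i^{Min})$ shrinking'', which is exactly the nontrivial Proposition \ref{shr.bdd}(a)/Lemma \ref{L:3.2}(b) (a variant of Lemma 2.10 of [OS]). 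Your duality sketch cannot replace it: by Proposition \ref{P:2.7a}, $P=T_{Min}\circ S_{Min}$ is a projection of $Z_{Min}$ onto $T_{Min}(X)$ with complement $\ker S_{Min}$, and density of $\mathrm{span}(f_i)$ in $X^*$ only controls the components $P^*z^*=S_{Min}^*T_{Min}^*z^*$ of functionals $z^*\in Z_{Min}^*$; it says nothing about functionals annihilating $T_{Min}(X)$, i.e.\ about the behaviour of $(e_i^{Min})$ on $\ker S_{Min}$. The critical case is precisely a normalized block $(y_k)$ of $(e_i^{Min})$ with $\|S_{Min}(y_k)\|\to0$; the paper handles it by splitting each $y_k$ at an interval where the $\|\cdot\|_{Min}$-maximum is attained and applying the two-sided estimate of Lemma \ref{L:3.2}(a) to the resulting semi-normalized blocks in $X$ — nothing formal. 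Your closing step e)$\Rightarrow$a) is also unsound as written: the ``lift'' of a normalized block $u_k=\sum_{i\in I_k}a_ix_i$ is $y_k=\sum_{i\in I_k}a_ie_i^{Min}$ with $\|y_k\|_{Min}=\max_{m\le n}\|\sum_{i\in[m,n]\cap I_k}a_ix_i\|$, which need not be bounded (a frame allows massive internal cancellation within a block), so shrinkingness of $(e_i^{Min})$ only yields $g(u_k)/\|y_k\|_{Min}\to0$. The paper avoids both problems by proving a)$\Rightarrow$e) via Proposition \ref{shr.bdd}(a) and closing the loop with e)$\Rightarrow$c) via Proposition \ref{bdd.2}(a), then c)$\Rightarrow$d)$\Rightarrow$a).

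Conversely, the step you single out as ``the main obstacle'', a)$\Rightarrow$b)(i), is in fact immediate, and your treatment of it has its own flaw. If $\|f_m|_{\mathrm{span}(x_i:i\ge n)}\|\not\to0$, the witnesses $z_n$ may be chosen in the algebraic span, i.e.\ they are already finite linear combinations of $(x_i)_{i\ge n}$; passing to a subsequence with successive supports gives a block sequence with $\|z_n\|\ge f_m(z_n)/\|f_m\|\ge\delta/\|f_m\|$, whose normalization satisfies $f_m\ge\delta$ and so is not weakly null — contradicting a) directly, which is why the paper treats this as clear (compare Proposition \ref{pre.pre}(a)). Your detour through the frame expansion $z_n=\sum_i f_i(z_n)x_i$, truncating to $w_n=\sum_{i\in F_n}f_i(z_n)x_i$ with $\min F_n\to\infty$, is unjustified: for a frame the low coefficients $f_i(z_n)$, $i<\min F_n$, of a vector in $\mathrm{span}(x_j:j\ge n)$ need not be small — their smallness is essentially the local shrinking you are trying to prove, and indeed the discarded head eventually contains the very term $f_m(z_n)x_m$ with $f_m(z_n)\ge\delta$. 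The uniform smallness of tails you invoke does not address this. Your remaining soft steps are fine: the gliding hump in a)$\Rightarrow$b)(ii) is legitimate there because the hypothesis $f_m(u_n)\to0$ does control the heads (this is the paper's argument too), and your Hahn--Banach/Goldstine route for b)$\Rightarrow$c) is a valid alternative to the paper's block-sequence contradiction.
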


\begin{thm}\label{cor.bdd}
Let $(x_i,f_i)$ be a frame of a Banach space $X$. Then the following
conditions are equivalent.
\begin{enumerate}
\item[a)] $(x_i,f_i)$ locally shrinking and for all $x^{**}\in X^{**}$,
 $\| x^{**}|_{\text{\rm span}(f:i:i\ge n)}\|\to0$, if $n\to\infty$ .
\item[b)] $(x_i,f_i)$ is locally shrinking, locally boundedly complete and pre-boundedly complete.
\item[c)]\begin{enumerate}
\item[i)] $(x_i,f_i)$ is locally shrinking and locally boundedly complete.
\item[ii)] For every $x^{**}\in X^{**}$, $\sum  x^{**}(f_i)
x_i$ converges under the topology $\sigma(X,
\overline{\textrm{span}(f_i:i\in\N)})$.
\end{enumerate}
\item[d)]\begin{enumerate}
\item[i)] $(x_i,f_i)$ is locally shrinking and locally boundedly complete.
\item[ii)] $X$ is isomorphic to
$\overline{\textrm{span}(f_i:i\in\N)}^*$ under the natural canonical
map.
\end{enumerate}
\item[e)]\begin{enumerate}
\item[i)] $(x_i,f_i)$ is locally shrinking and locally boundedly complete.
\item[ii)]  $(e^{Max}_i)$ is a boundedly complete basis of $Z_{Max}$.
\end{enumerate}
\end{enumerate}
\end{thm}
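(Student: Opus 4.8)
The plan is to prove Theorem \ref{cor.bdd} by establishing the cycle of implications a) $\Rightarrow$ b) $\Rightarrow$ c) $\Rightarrow$ d) $\Rightarrow$ e) $\Rightarrow$ a), leaning heavily on the machinery already built, in particular Propositions \ref{self} and \ref{P:2.5b} and the construction of $Z_{Max}$ in Theorem \ref{associated.maximal.pp1}. Throughout, write $W=\overline{\mathrm{span}}(f_i:i\in\N)\subset X^*$, so that by Proposition \ref{norming} $W$ is norming and, once local shrinkingness is in play, Proposition \ref{P:2.5b} tells us $(f_i,x_i)$ is a frame for $W$. The key conceptual point tying everything together is that for a weakly localized frame, the canonical map $J\colon X\to W^*$ (restriction of the bidual embedding) is an isomorphic embedding: injectivity and boundedness are clear since $W$ is norming, and the reverse inequality $\|x\|\le K\|J x\|$ follows from writing $x=\sum f_i(x)x_i$ with partial sums bounded by $K\|x\|$ and testing against elements of $W$, as in \eqref{E:2.5a.1}. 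So condition d)(ii) is really asking whether this embedding is onto.

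The implication a) $\Rightarrow$ b) is where the real content sits, and I expect it to be the main obstacle. Assuming a), I would first argue local shrinkingness and local bounded completeness hold; local shrinkingness should follow by a gliding-hump/contradiction argument — if $\|f_m|_{\mathrm{span}(x_i:i\ge n)}\|\not\to 0$ one extracts a normalized block sequence $(u_k)$ of $(x_i)$ with $f_m(u_k)$ bounded away from $0$, contradicting the hypothesis that every such block is weakly null (here one must be a little careful, since a priori a block of $(x_i)$ need not be normalizable, but the offending configuration does produce genuine normalized blocks). Local bounded completeness is immediate from a) since $\|x_m|_{\mathrm{span}(f_i:i\ge n)}\|\le \|x^{**}|_{\cdots}\|$ applied to $x^{**}=Jx_m$, and more directly follows from b)'s hypothesis applied to evaluation at $x_m$. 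For pre-bounded completeness I would take $x^{**}\in X^{**}$, form the partial sums $s_n=\sum_{i=1}^n x^{**}(f_i)x_i\in X$, show $(s_n)$ is bounded (using the projection constant $K$ and a $w^*$-approximation of $x^{**}$ by elements of $X$, exploiting that $W$ is norming), and then show $(s_n)$ is norm-Cauchy: the tail $\|\sum_{i=m}^n x^{**}(f_i)x_i\|$ is controlled by $\|x^{**}|_{\mathrm{span}(f_i:i\ge m)}\|$ via the dual estimate \eqref{E:2.5a.2} transposed, which is exactly the quantity hypothesized in a) to vanish.

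The remaining implications are lighter. For b) $\Rightarrow$ c), pre-bounded completeness gives $s_n\to s$ in norm for each $x^{**}$, and convergence in the weaker topology $\sigma(X,W)$ is automatic; one checks the $\sigma(X,W)$-limit is independent of anything and records the reconstruction. For c) $\Rightarrow$ d): define $R\colon W^*\to X$ by sending $w^*\mapsto \sigma(X,W)$-$\lim_n \sum_{i=1}^n w^*(f_i)x_i$ (well-defined by c)(ii) applied to a Hahn–Banach extension $x^{**}$ of $w^*$, and independent of the extension because $\sum_{i=1}^n w^*(f_i)x_i$ only sees $w^*$ on $W$), check $R$ is bounded by closed graph or by the uniform bound on partial sums, and verify $R\circ J=\mathrm{id}_X$ and $J\circ R=\mathrm{id}_{W^*}$ using that $(f_i,x_i)$ is a frame for $W$ so that $\{f_i\}$ separates points of $W^*$; with the embedding $J$ already known to be isomorphic, this makes $J$ onto. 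For d) $\Rightarrow$ e): under local shrinkingness $Z_{Max}$ is genuinely an associated space (Theorem \ref{associated.maximal.pp1}(c)), and the decomposition operator $T_{Max}\colon X\to Z_{Max}$ identifies $X$ with a complemented subspace; d)(ii) says $X\cong W^*$, and $(f_i,x_i)$ being a frame for $W$ with $Z_{Max}$ built precisely from the $\|\sum b_if_i\|$-norms should identify $(e_i^{Max})$ with the coordinate functionals' dual basis, forcing $(e_i^{Max})$ to be boundedly complete by the classical fact that the basis dual to a shrinking basis of a predual is boundedly complete. Finally e) $\Rightarrow$ a): local shrinkingness gives, via the dual estimate \eqref{E:2.5a.2}, that for $x^{**}\in X^{**}$ the tail norms $\|x^{**}|_{\mathrm{span}(f_i:i\ge n)}\|$ are comparable to the $Z_{Max}$-tail norms of the coordinates of $T_{Max}^{**}x^{**}$ (or of the associated element of $Z_{Max}^{**}$), and bounded completeness of $(e_i^{Max})$ forces these to tend to zero. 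The subtle bookkeeping step I would budget the most care for is verifying, in e) $\Rightarrow$ a) and d) $\Rightarrow$ e), that $Z_{Max}^*$ is naturally $\overline{\mathrm{span}}(e_i^*)$ inside the dual — i.e. that no extra functionals appear — which is where local shrinkingness of the frame (hence shrinkingness of the relevant basis of the predual side) is exactly what one needs.
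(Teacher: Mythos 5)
Your chain a) $\Rightarrow$ b) $\Rightarrow$ c) $\Rightarrow$ d) is essentially the paper's argument: a) $\Rightarrow$ b) is Proposition \ref{pre.pre}(b) (plus the trivial observation that a) applied to the canonical images of the $x_m$ gives local bounded completeness), and c) $\Rightarrow$ d) is exactly the canonical map $J$, which the paper handles via Hahn--Banach and the Open Mapping theorem. (One small misreading: in a) local shrinkingness is a stated hypothesis, so your gliding-hump argument ``contradicting that every normalized block is weakly null'' addresses a hypothesis that belongs to Theorem \ref{shrinking.1}(a), not to this theorem.) The genuine gaps are in the two implications where you depart from the paper, d) $\Rightarrow$ e) and e) $\Rightarrow$ a). For e) $\Rightarrow$ a), the asserted comparability of $\|x^{**}|_{\mathrm{span}(f_i:i\ge n)}\|$ with the $Z_{Max}$-tail norms of $(x^{**}(f_i))$ fails in the direction you need: by definition of $\|\cdot\|_{Max}$, the tail $\big\|\sum_{i\ge n}x^{**}(f_i)e_i^{Max}\big\|$ only controls $\big|\sum b_i x^{**}(f_i)\big|$ over coefficient sequences $(b_i)$ supported in $[n,\infty)$ whose \emph{partial sums} satisfy $\max_{k\le l}\|\sum_{i=k}^l b_i f_i\|\le 1$, whereas a) requires a supremum over all $g\in\mathrm{span}(f_i:i\ge n)$ with $\|g\|\le 1$; such a $g$ has no canonical representation with controlled partial sums, since $(f_i)$ is not a basic sequence. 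A correct argument must first re-expand $g=\sum_j g(x_j)f_j$ using that $(f_i,x_i)$ is a frame of $W=\overline{\mathrm{span}}(f_i:i\in\N)$ (Proposition \ref{P:2.5b}), split off finitely many coordinates via $|g(x_j)|\le\|x_j|_{\mathrm{span}(f_i:i\ge n)}\|$ (local bounded completeness), and control the remaining tail by \eqref{E:2.5a.1} together with the norm convergence in $Z_{Max}$ of $\sum x^{**}(f_i)e_i^{Max}$, which is the only thing e) gives you directly; this extra step is the missing idea. Similarly, d) $\Rightarrow$ e) is not yet a proof: the ``classical fact'' you invoke needs a \emph{shrinking} basis, namely the minimal basis with respect to $(f_i)$, and deducing its shrinkingness from d) is itself a nontrivial step (e.g.\ apply Theorem \ref{shrinking.1} to the frame $(f_i,Jx_i)$ of $W$, using d)(ii) to see that $W^*=\overline{\mathrm{span}}(Jx_i:i\in\N)$, or redo the block argument of Lemma \ref{L:3.2}(b)); moreover the identification of $(e_i^{Max})$ with the coordinate functionals of that minimal basis is precisely the computation carried out in the paper's Proposition \ref{shr.bdd}(b), not an automatic fact.

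The paper sidesteps both difficulties by wiring e) into the cycle differently: it proves a) $\Rightarrow$ e) via Proposition \ref{shr.bdd}(b), where the hypothesis of a) is exactly what makes every normalized block of $(f_i)$ weakly null and hence the minimal basis of $(f_i)$ shrinking (Lemma \ref{L:3.2}(b)), and it proves e) $\Rightarrow$ b) in one line from Proposition \ref{bdd.2}(b) together with Theorem \ref{associated.maximal.pp1}(c) (bounded completeness of the associated basis gives pre-bounded completeness of the frame), after which b) $\Rightarrow$ c) $\Rightarrow$ d) $\Rightarrow$ a) closes the circle. If you replace your d) $\Rightarrow$ e) $\Rightarrow$ a) by these two implications, your outline becomes complete; if you insist on your routing, you must supply the shrinkingness argument in d) $\Rightarrow$ e) and the re-expansion argument in e) $\Rightarrow$ a) sketched above.
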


For the above main theorems, we need the following results.

\begin{prop}\label{pre.pre}
\begin{enumerate}
\item[(a)] Every frame  satisfying (a) of Theorem \ref{shrinking.1}  is pre-shrinking.
\item[(b)] Every frame  satisfying (a) of Theorem \ref{cor.bdd}  is
pre-boundedly complete.
\end{enumerate}
\end{prop}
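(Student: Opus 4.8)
The plan is to deduce both statements from the Cauchy criterion, using the elementary norm estimates recorded in Proposition~\ref{basic.pp1}(c). Throughout, let $K$ denote the projection constant of $(x_i,f_i)$.

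\emph{Part (a).} Fix $f\in X^*$. By Proposition~\ref{basic.pp1}(b) the partial sums $g_n=\sum_{i=1}^n f(x_i)f_i$ converge to $f$ in the $w^*$-topology, so once we show that $(g_n)$ is norm-Cauchy, its norm limit is forced to equal $f$; this is exactly the assertion that $(f_i,x_i)$ is a frame of $X^*$, i.e.\ that $(x_i,f_i)$ is pre-shrinking. By \eqref{E:2.5a.2},
\[
\|g_n-g_m\|=\Big\|\sum_{i=m+1}^n f(x_i)f_i\Big\|\le K\,\|f|_{\mathrm{span}(x_i:i\ge m+1)}\|,
\]
so the whole matter reduces to proving that $d_m:=\|f|_{\mathrm{span}(x_i:i\ge m)}\|\to 0$. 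Since $d_m$ is non-increasing, it suffices to rule out the case $d_m\ge\delta$ for all $m$ (for some fixed $\delta>0$). In that case I would build a normalized block sequence of $(x_i)$ as follows: put $m_1=1$; given $m_k$, choose $y_k\in\mathrm{span}(x_i:i\ge m_k)$ with $\|y_k\|\le1$ and $|f(y_k)|>\delta/2$, set $z_k=y_k/\|y_k\|$, observe that $z_k$ is a \emph{finite} linear combination of the $x_i$ and hence supported in some interval $[m_k,p_k]$, and let $m_{k+1}=p_k+1$. Then $(z_k)$ is a normalized block sequence of $(x_i)$ with $|f(z_k)|\ge|f(y_k)|>\delta/2$ for every $k$; by hypothesis~(a) of Theorem~\ref{shrinking.1} it is weakly null, so $f(z_k)\to 0$, a contradiction. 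Hence $d_m\to 0$, which completes (a).

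\emph{Part (b).} Assume $(x_i,f_i)$ satisfies condition~(a) of Theorem~\ref{cor.bdd}, and fix $x^{**}\in X^{**}$. I will show that $S_n=\sum_{i=1}^n x^{**}(f_i)x_i$ is norm-Cauchy in $X$, which is the definition of pre-bounded completeness. For $n>m$ and $g\in B_{X^*}$,
\[
\big|g(S_n-S_m)\big|=\Big|\sum_{i=m+1}^n x^{**}(f_i)\,g(x_i)\Big|=\Big|x^{**}\Big(\sum_{i=m+1}^n g(x_i)f_i\Big)\Big|,
\]
and $\sum_{i=m+1}^n g(x_i)f_i$ lies in $\mathrm{span}(f_i:i\ge m+1)$ with norm at most $K$ by \eqref{E:2.5a.1}; hence this quantity is at most $K\,\|x^{**}|_{\mathrm{span}(f_i:i\ge m+1)}\|$. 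Taking the supremum over $g\in B_{X^*}$ gives $\|S_n-S_m\|\le K\,\|x^{**}|_{\mathrm{span}(f_i:i\ge m+1)}\|$, which tends to $0$ as $m\to\infty$ by hypothesis. Thus $(S_n)$ is Cauchy, hence convergent.

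I expect the only genuinely delicate point to be the block-sequence construction in part (a): one has to be sure that each $z_k$ may be taken to be a \emph{finite} combination of $\{x_i:i\ge m_k\}$ — immediate from $z_k\in\mathrm{span}(x_i:i\ge m_k)$ — so that it has a bounded support and the intervals $[m_k,p_k]$ can be arranged successively, and one must check that normalizing $y_k$ does not destroy the lower bound on $|f(\cdot)|$, which is harmless since $\|y_k\|\le1$. Part~(b) is essentially bookkeeping once \eqref{E:2.5a.1} has been invoked; note that, for this particular conclusion, only the restriction-norm decay of $x^{**}$ is used and local shrinking does not enter.
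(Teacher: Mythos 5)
Your proposal is correct and follows essentially the same route as the paper: part (a) spells out the paper's observation that hypothesis (a) of Theorem~\ref{shrinking.1} forces $\|f|_{\text{span}(x_i:i\ge n)}\|\to 0$ for every $f\in X^*$ (your block-sequence contradiction), then combines \eqref{E:2.5a.2} with the $w^*$-convergence from Proposition~\ref{basic.pp1}(b); part (b) is precisely the paper's estimate $\|\sum_{i=m}^n x^{**}(f_i)x_i\|\le K\,\|x^{**}|_{\text{span}(f_i:i\ge m)}\|$ followed by the Cauchy criterion.
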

\begin{proof}
Assume that $(x_i,f_i)$ is a frame of a Banach space $X$.

(a) Notice that every normalized block sequence of $(x_i)$ is weakly
null if and only if for all $f\in X^*$,
$\|f|_{\textrm{span}(x_i:i\ge n)}\|\to 0$, as $n\to\infty$. This easily
 implies our claim  by Proposition \ref{basic.pp1} (b) and
(c).

\noindent
(b) For $ m\le n$  in $\N$  we have
\begin{align}
\Big\| \sum_{i=m}^n x^{**}(f_i) x_i\Big\|&= \sup_{f\in
B_{X^*}}\Big|\sum_{i=m}^n x^{**}(f_i) f(x_i)\Big|\\
&=\sup_{f\in B_{X^*}}x^{**}\Big(\sum_{i=m}^n  f(x_i) f_i\Big)\notag\\
&\le \sup\limits_{ g \in \, \text{\rm span}(f_i:i\ge m), \|g\|\le K
}x^{**}(g) =
 K \, \|x^{**}|_{\text{\rm span}(f_i:i\ge m)}\|,\notag
\end{align}
where $K$ is the projection constant of
$(x_i,f_i)$.
\end{proof}

\begin{prop}\label{bdd.2}
Let $(x_i,f_i)$ is a Schauder frame of a Banach space $X$. Assume
that $Z$ is an associated space with an associated basis $(z_i)$ to
$(x_i,f_i)$.
\begin{enumerate}
\item[a)] If $(z_i)$ is shrinking, then $(x_i,f_i)$ is pre-shrinking.
\item[b)]
If $(z_i)$ is boundedly complete, then $(x_i,f_i)$ is pre-boundedly
complete.
\end{enumerate}
\end{prop}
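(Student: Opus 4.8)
The plan is to exploit the identity $S\circ T=\mathrm{id}_X$ from Proposition \ref{P:2.7a} together with the adjoint relations $S(z_i)=x_i$ and $T^*(z_i^*)=f_i$, transferring convergence statements between $X$ (resp.\ $X^*$) and the associated space $Z$ (resp.\ $Z^*$). For part (a), suppose $(z_i)$ is shrinking. I want to show $(f_i,x_i)$ is a frame of $X^*$, i.e.\ for every $f\in X^*$, $f=\|\cdot\|\text{-}\lim_{n}\sum_{i=1}^n f(x_i)f_i$. Since $(z_i)$ is shrinking, $(z_i^*)$ is a basis of $Z^*$, so for $f\in X^*$ the functional $T^*f\in Z^*$ expands as $T^*f=\sum_i (T^*f)(z_i)\,z_i^* = \sum_i f(x_i)\,z_i^*$ with convergence in norm in $Z^*$. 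Now apply $S^*:Z^*\to X^*$ (bounded, since $S$ is bounded) and use $S^*T^*=(TS)^*$; but more directly, since $S^*(z_i^*)$ can be computed against $z_j$ via $\langle S^*z_i^*, z_j\rangle = \langle z_i^*, Sz_j\rangle$—this isn't quite what I want because $S$ maps into $X$, not back into $Z$. The cleaner route: $S^*T^*f = \sum_i f(x_i)\,S^*(z_i^*)$ in norm, and $S^*T^* = (TS)^*$ is the adjoint of the projection $P=TS$ onto $T(X)$; one checks $(TS)^*|_{T(X)^*}$ acts appropriately, and in particular $S^*T^*f$ restricted correctly recovers $f$. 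Actually the most transparent computation is: for $x\in X$, $\langle \sum_{i=1}^n f(x_i)f_i, x\rangle = \langle \sum_{i=1}^n f(x_i) T^*z_i^*, x\rangle = \langle \sum_{i=1}^n f(x_i) z_i^*, Tx\rangle = \langle \sum_{i=1}^n (T^*f)(z_i) z_i^*, Tx\rangle \to \langle T^*f, Tx\rangle = f(Sx)$—wait, that gives $f(STx)=f(x)$ only after noting $STx = x$. Hence the partial sums converge weakly to $f$; to upgrade to norm convergence, note $\|\sum_{i=m}^n f(x_i)f_i\| = \|T^*(\sum_{i=m}^n (T^*f)(z_i)z_i^*)\| \le \|T^*\|\,\|\sum_{i=m}^n (T^*f)(z_i)z_i^*\|_{Z^*} \to 0$ since the tail sums of the norm-convergent expansion of $T^*f$ in $Z^*$ go to zero. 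So the partial sums form a norm-Cauchy sequence, and their weak limit $f$ is therefore the norm limit.

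For part (b), suppose $(z_i)$ is boundedly complete. I must show that for every $x^{**}\in X^{**}$ the series $\sum_i x^{**}(f_i)x_i$ converges in $X$. The idea is to push $x^{**}$ forward to $Z^{**}$ via $T^{**}:X^{**}\to Z^{**}$. Set $z^{**}=T^{**}x^{**}\in Z^{**}$. For a boundedly complete basis $(z_i)$, the coordinate functionals $(z_i^*)$ have the property that any $z^{**}\in Z^{**}$ for which $\sup_n\|\sum_{i=1}^n z^{**}(z_i^*)z_i\|<\infty$ gives a norm-convergent series $\sum_i z^{**}(z_i^*)z_i\in Z$; and crucially, since $(z_i)$ boundedly complete implies $Z$ does not contain $c_0$ in the relevant way, one in fact has that $(z_i)$ boundedly complete is equivalent to: the canonical map identifies $Z$ with $[z_i^*]^*$, so every $z^{**}\in Z^{**}$ restricted to $\overline{\mathrm{span}}(z_i^*)$ is represented by a vector of $Z$, namely $\sum_i z^{**}(z_i^*)z_i$ converges. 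Compute $z^{**}(z_i^*) = (T^{**}x^{**})(z_i^*) = x^{**}(T^*z_i^*) = x^{**}(f_i)$. Thus $\sum_i x^{**}(f_i)z_i$ converges in norm in $Z$ to some $z\in Z$. Applying the bounded operator $S$, $\sum_i x^{**}(f_i)x_i = \sum_i x^{**}(f_i)S(z_i) = S\big(\sum_i x^{**}(f_i)z_i\big) = S(z)$ converges in norm in $X$, which is exactly pre-bounded completeness.

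The main obstacle I anticipate is justifying, in part (b), that bounded completeness of the basis $(z_i)$ really delivers norm convergence of $\sum_i x^{**}(f_i)z_i$ in $Z$ from the mere fact that $x^{**}(f_i)=z^{**}(z_i^*)$ for a single $z^{**}\in Z^{**}$—one needs the standard characterization (due essentially to James) that a basis is boundedly complete iff the natural map $Z\to[z_i^*]^*$ is an onto isomorphism, equivalently iff $\sup_n\|\sum_1^n a_i z_i\|<\infty$ forces convergence, and then verify the partial sums of $\sum x^{**}(f_i)z_i$ are uniformly bounded; this boundedness follows since $\|\sum_{i=1}^n x^{**}(f_i)z_i\| = \|\sum_{i=1}^n z^{**}(z_i^*)z_i\| \le C\|z^{**}\|$ by a weak$^*$-density argument (approximate $z^{**}$ weak$^*$ by elements of $Z$ and use that the basis projections are uniformly bounded). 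Part (a) is more routine; the only mild care needed is the two-step passage (weak convergence of partial sums plus norm-Cauchyness) to conclude norm convergence, but this is standard once the estimate $\|\sum_{i=m}^n f(x_i)f_i\| \le \|T^*\|\,\|\sum_{i=m}^n (T^*f)(z_i)z_i^*\|_{Z^*}$ is in hand.
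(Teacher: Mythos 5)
Your argument is correct and is essentially the paper's own proof: in (a) you expand the functional in $Z^*$ along the coordinate functionals of the shrinking basis and then use $T^*(z_i^*)=f_i$ together with $S\circ T=\mathrm{id}_X$ to identify the norm limit as $f$, and in (b) you push $x^{**}$ forward by $T^{**}$, use bounded completeness (plus the uniform bound $\|\sum_{i=1}^n z^{**}(z_i^*)z_i\|\le K\|z^{**}\|$) to get norm convergence of $\sum T^{**}(x^{**})(z_i^*)z_i$, and apply $S$ — exactly the paper's estimate. The only correction needed is notational in (a): since $T:X\to Z$ and $S:Z\to X$, the adjoints are $S^*:X^*\to Z^*$ and $T^*:Z^*\to X^*$, so the element of $Z^*$ you expand is $S^*f$ (with coefficients $(S^*f)(z_i)=f(Sz_i)=f(x_i)$), not ``$T^*f$'', and the bounded operator you then apply is $T^*$; with that relabeling your computation is precisely $f=T^*S^*f=\sum f(x_i)f_i$ with norm convergence.
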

\begin{proof}
Assume that $S$ and $T$ are the corresponding associated
reconstruction and decomposition operators, respectively. By
Proposition \ref{P:2.7a}, $S(z_i)=x_i$ and $T^*(z_i^*)=f_i$ for all
$i\in\N$.

(a) If $(z_i)$ is shrinking, we have
\begin{align}
f=T^*S^*(f)=T^*\Big( \sum \langle S^*(f), z_i\rangle z_i^* \Big)=
\sum \langle f, S(z_i)\rangle T^*(z_i^*)=\sum f(x_i) f_i,
\end{align}
which proves our claim.

\noindent
(b) For any $x^{**}\in X^{**}$ and $m,n\in\N$ with $m\le n$,
\begin{align}\label{bdd.3}
\Big\|\sum\limits_{i=m}^n
x^{**}(f_i)x_i\Big\|&=\Big\|\sum\limits_{i=m}^n
x^{**}(T^*(z_i^*))S(z_i)\Big\|=\Big\|S\Big(\sum\limits_{i=m}^n
T^{**}(x^{**})(z_i^*)z_i\Big)\Big\|   \\
&\leq\|S\|\cdot\Big\|\sum\limits_{i=m}^n
T^{**}(x^{**})(z_i^*)z_i\Big\|.\notag
\end{align}
Since $(z_i)$ is boundedly complete,
 $\sum\limits_{i=1}^\infty T^{**}(x^{**})(z_i^*)z_i$
converges, by (\ref{bdd.3}), so does $\sum\limits_{i=1}^\infty
x^{**}(f_i)x_i$, which completes the proof.
\end{proof}

\begin{prop}\label{shr.bdd}
Let $(x_i,f_i)$ be a Schauder frame of a Banach space $X$.
\begin{enumerate}
\item[a)] Assume that $Z_{Min}$ and $(e_i^{Min})$ are the minimal space
and minimal basis with respect to $(x_i,f_i)$, respectively. 
If $(x_i,f_i)$  satisfies (a) of Theorem \ref{shrinking.1}, then $(e_i^{Min})$ is shrinking.

\item[b)] Assume that $Z_{Max}$ are the maximal space
with the maximal basis $(e_i^{Max})$ with respect to $(x_i,f_i)$. If $(x_i,f_i)$ satisfies (a) of Theorem \ref{cor.bdd} , then $(e_i^{Max})$ is
boundedly complete.
\end{enumerate}
\end{prop}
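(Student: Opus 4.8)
My plan rests on James' classical duality criteria: a bi-monotone basis $(e_i)$ of a Banach space $Z$ is shrinking if and only if $\overline{\text{span}}(e_i^*)=Z^*$, equivalently if and only if every normalised block sequence of $(e_i)$ is weakly null; and it is boundedly complete if and only if its biorthogonal basis $(e_i^*)$ is a shrinking basis of $\overline{\text{span}}(e_i^*)$, equivalently if and only if the canonical map $Z\to\overline{\text{span}}(e_i^*)^*$ is onto. Throughout I use that, by Proposition \ref{pre.pre}(a) and the remark in its proof, hypothesis (a) of Theorem \ref{shrinking.1} is equivalent to: $\|g|_{\text{span}(x_i:i\ge n)}\|\to 0$ for every $g\in X^*$; in particular $(x_i,f_i)$ is then locally shrinking and pre-shrinking.

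To prove (a) I would show $\overline{\text{span}}(e_i^{Min*})=Z_{Min}^*$. Since $Z_{Min}$ is an associated space (Theorem \ref{minimal.1}(a)), Proposition \ref{P:2.7a} supplies $S_{Min}\colon Z_{Min}\to X$ and $T_{Min}\colon X\to Z_{Min}$ with $S_{Min}T_{Min}=\mathrm{id}_X$, $S_{Min}e_i^{Min}=x_i$, $T_{Min}^*e_i^{Min*}=f_i$; writing $P=T_{Min}S_{Min}$ for the induced projection, $Z_{Min}^*=P^*(Z_{Min}^*)\oplus\ker P^*=S_{Min}^*(X^*)\oplus(T_{Min}X)^\perp$. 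The first summand is handled directly: if $w=\sum_{i\ge n}a_ie_i^{Min}$ is a finitely supported vector with $\|w\|_{Min}\le 1$, then $S_{Min}w=\sum_{i\ge n}a_ix_i\in\text{span}(x_i:i\ge n)$ with $\|S_{Min}w\|\le 1$, so for $g\in X^*$ one has $|g(S_{Min}w)|\le\|g|_{\text{span}(x_i:i\ge n)}\|\to 0$; hence the series $\sum_i g(x_i)e_i^{Min*}$ converges in norm to $S_{Min}^*g$, and $S_{Min}^*(X^*)\subseteq\overline{\text{span}}(e_i^{Min*})$.

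The remaining task, which I expect to be the main obstacle, is to show $(T_{Min}X)^\perp\subseteq\overline{\text{span}}(e_i^{Min*})$ as well. The vectors $n_i:=(I-P^*)e_i^{Min*}=e_i^{Min*}-\sum_j f_i(x_j)e_j^{Min*}$ lie in $(T_{Min}X)^\perp\cap\overline{\text{span}}(e_i^{Min*})$ (using the previous step for the convergence), and $n_i$ agrees with $e_i^{Min*}$ on $\ker S_{Min}$; since $(T_{Min}X)^\perp$ is canonically $(\ker S_{Min})^*$, the claim reduces to showing that the coordinate functionals, restricted to $\ker S_{Min}$, span $(\ker S_{Min})^*$ --- in other words that the complemented subspace $\ker S_{Min}$ of $Z_{Min}$ has a shrinking-type structure relative to this restricted biorthogonal system. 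This is where the weak nullity of blocks of $(x_i)$ has to be used in an essential way: by the description of $\|\cdot\|_{Min}$ through partial sums, a normalised block of $(e_i^{Min})$ supported on an interval $[m,n]$ and lying in $\ker S_{Min}$ corresponds to a ``path'' $t\mapsto\sum_{m\le i\le t}a_ix_i$ in $X$ that starts and returns to $0$ while attaining norm bounded away from $0$; splitting it at a point of maximal norm yields a semi-normalised block of $(x_i)$, which is weakly null by hypothesis, and one then needs a sliding-hump argument to conclude that the block itself is weakly null in $Z_{Min}$. Making this upgrade rigorous is, I believe, the technical heart of the proof.

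To prove (b) I would dualise (a). Applying hypothesis (a) of Theorem \ref{cor.bdd} with $x^{**}=x_i$ shows $(x_i,f_i)$ is weakly localised, so $Z_{Max}$ is an associated space (Theorem \ref{associated.maximal.pp1}(c)), and by Proposition \ref{pre.pre}(b) it is also pre-boundedly complete. The decisive observation is that, comparing Definition \ref{max.def} with Definition \ref{minimal.def} applied to the sequence $(f_i)\subseteq X^*$, the assignment $e_i^{Max}\mapsto (e'_i)^*$ extends to a surjective isometry of $Z_{Max}$ onto $\overline{\text{span}}((e'_i)^*)\subseteq (Z'_{Min})^*$, where $Z'_{Min}$ is the minimal space of the sequence $(f_i)$ and $(e'_i)$ its minimal basis. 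Hence, by the bounded-completeness criterion above, $(e_i^{Max})$ is boundedly complete if and only if $(e'_i)$ is a shrinking basis of $Z'_{Min}$. Finally, hypothesis (a) of Theorem \ref{cor.bdd}, namely $\|x^{**}|_{\text{span}(f_i:i\ge n)}\|\to 0$ for all $x^{**}\in X^{**}$, is exactly the statement that every normalised block of $(f_i)$ is weakly null in $X^*$; so the argument of part (a), applied with $(f_i)$ in place of $(x_i)$, gives that $(e'_i)$ is shrinking, and therefore $(e_i^{Max})$ is boundedly complete. The obstacle here is precisely the one inherited from part (a).
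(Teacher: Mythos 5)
Your plan for (a) is not yet a proof, and you flag the missing piece yourself: the ``upgrade'' from weak nullity of blocks of $(x_i)$ in $X$ to weak nullity of the corresponding blocks of $(e_i^{Min})$ in $Z_{Min}$ is exactly where the content lies, and you give no argument for it. The paper closes this gap with a quantitative lemma (Lemma \ref{L:3.2}, a variant of Lemma 2.10 of \cite{OS}): if $(y_i)\subset B_{Z_{Min}}$ is a block basis of $(e_i^{Min})$ whose image $(w_i)=(S_{Min}(y_i))$ is a semi-normalized basic sequence in $X$, then $\|\sum a_i w_i\|\le\|\sum a_i y_i\|\le(\frac{2K}{a}+K)\|\sum a_i w_i\|$ with $a=\inf_i\|w_i\|$ and $K$ the projection constant of $(w_i)$; hence $(y_i)$ and $(w_i)$ are equivalent basic sequences and weak nullity transfers from $X$ to $Z_{Min}$. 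For the blocks you isolate inside $\ker S_{Min}$ (more generally, blocks with $\|S_{Min}(y_i)\|\to0$), Lemma \ref{L:3.2}(b) supplies the precise form of the sliding hump you defer: by the definition of $\|\cdot\|_{Min}$ one picks an interval $[m_i,n_i]$ in the support of $y_i$ with $\|\sum_{j=m_i}^{n_i}a_jx_j\|=1$, splits $S_{Min}(y_i)=w_i^{(1)}+w_i^{(2)}$ accordingly, checks both pieces are (after passing to subsequences) semi-normalized, hence weakly null by hypothesis and basic, and then applies the two-sided estimate to each half to conclude $y_i=y_i^{(1)}+y_i^{(2)}$ is weakly null in $Z_{Min}$. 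Without such an estimate your argument stops exactly at the decisive step. Note also that your decomposition $Z_{Min}^*=S_{Min}^*(X^*)\oplus(T_{Min}X)^{\perp}$ buys little: the summand $(T_{Min}X)^{\perp}\cong(\ker S_{Min})^*$ is precisely the hard case, and the restricted functionals $e_i^{Min*}|_{\ker S_{Min}}$ are not biorthogonal to a basis of $\ker S_{Min}$, so no standard shrinking criterion applies there; the paper avoids this by directly verifying James' block criterion for $(e_i^{Min})$ in $Z_{Min}$.

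Your part (b) is essentially the paper's route: the identification of $(e_i^{Max})$ with the biorthogonal functionals of the minimal basis $(e_i')$ of the sequence $(f_i)$ is correct (the paper phrases it as the equivalence of the coordinate functionals of $(e_i^{Max})$ with that minimal basis), hypothesis (a) of Theorem \ref{cor.bdd} does say every normalized block of $(f_i)$ is weakly null, and the conclusion then follows once one knows $(e_i')$ is shrinking. The paper gets this by applying Lemma \ref{L:3.2}(b) to the sequence $(f_i)$, which requires no frame structure for $(f_i,x_i)$ at all, whereas you route it through your part (a) machinery (which would additionally need Proposition \ref{P:2.5b} to make $(f_i,x_i)$ a frame of $\overline{\text{span}}(f_i:i\in\N)$). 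Either way, (b) inherits the same unproven step, so the proposal as it stands is incomplete rather than wrong.
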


For the proof of Proposition \ref{shr.bdd}, we will  need the
following result, which is a slight variation of Lemma 2.10 of
\cite{OS}.
\begin{lem}\label{L:3.2}
Let $X$ be a Banach space and a sequence $(x_i)\subset X\setminus
\{0\}$, and let $Z_{Min}$ and $(e^{Min}_i)$ be the associated minimal space and basis, respectively.
\begin{enumerate}
\item[a)] Let $(y_i)\subset B_{Z_{Min}}$ be a block basis of $(e_i^{Min})$
on $Z_{Min}$. Assume that the sequence $(w_i)=(S_{Min}(y_i))$ is  a
semi-normalized basic sequence in $X$. Then for $(a_i)\in \coo$,
$$\Big\|\sum a_i w_i\Big\|
\le\Big\|\sum a_i y_i\Big\| \le\Big(\frac{2K}a+K\Big) \Big\|\sum
a_iw_i\Big\|,$$ where $K$ is the projection constant of $(w_i)$ and
$a:=\inf\limits_{i\in\N} \|w_i\|$.
\item[b)] If every normalized block sequence of $(x_i)$ is weakly
null, then $(e_i^{Min})$ is shrinking.
\end{enumerate}
\end{lem}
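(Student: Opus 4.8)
The plan is to prove the two parts by quite different means: part~(a) is a direct estimate obtained by unfolding the definition of $\|\cdot\|_{Min}$ as a maximum over intervals, and part~(b) follows from a dichotomy according to whether the images $S_{Min}(y_i)$ remain bounded away from $0$.

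For part~(a), the left inequality is immediate since $\sum a_i w_i=S_{Min}\big(\sum a_i y_i\big)$ and $\|S_{Min}\|=1$ (Definition~\ref{minimal.def}). For the right inequality I would write $y_i=\sum_{j\in I_i}b^{(i)}_j e_j^{Min}$ with $I_1<I_2<\cdots$, so that $\sum a_i y_i=\sum_j c_j e_j^{Min}$ with $c_j=a_i b^{(i)}_j$ for $j\in I_i$ and $\big\|\sum a_i y_i\big\|_{Min}=\max_{m\le n}\big\|\sum_{j=m}^n c_j x_j\big\|_X$. Fixing an interval $[m,n]$, which meets only finitely many of the relevant blocks, say $I_p,\dots,I_q$, one splits
$$\sum_{j=m}^n c_j x_j=a_p\,S_{Min}(y_p')+\sum_{i=p+1}^{q-1}a_i w_i+a_q\,S_{Min}(y_q''),$$
where $y_p'$ and $y_q''$ are restrictions of $y_p$ and $y_q$ to sub-intervals of their supports. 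Bi-monotonicity of $(e_i^{Min})$ together with $\|S_{Min}\|=1$ gives $\|a_pS_{Min}(y_p')\|_X\le|a_p|\,\|y_p'\|_{Min}\le|a_p|\,\|y_p\|_{Min}\le|a_p|$ and likewise for the last summand, while the middle term is $\le K\big\|\sum a_i w_i\big\|_X$ by definition of the projection constant $K$ of $(w_i)$. Finally $|a_p|=\|a_pw_p\|_X/\|w_p\|_X\le\|a_pw_p\|_X/a\le (K/a)\big\|\sum a_i w_i\big\|_X$, and similarly for $|a_q|$; adding these three bounds and maximizing over $[m,n]$ yields the constant $2K/a+K$ (the cases $p=q$ and $q=p+1$ being only easier).

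For part~(b), I would use the classical fact that if $(e_i^{Min})$ is not shrinking, then a gliding-hump argument produces a normalized block basis $(y_i)$ of $(e_i^{Min})$ and a functional $\phi\in Z_{Min}^*$ with $\phi(y_i)\ge\delta>0$ for all $i$; such a block basis has no weakly null subsequence, so it suffices to show that \emph{every} normalized block basis $(y_i)$ of $(e_i^{Min})$ has a weakly null subsequence. Set $w_i=S_{Min}(y_i)\in B_X$ and split into two cases. If $\limsup_i\|w_i\|_X>0$, pass to a subsequence with $\|w_i\|_X\ge c>0$; then $(w_i/\|w_i\|_X)$ is a normalized block sequence of $(x_i)$, hence weakly null in $X$ by hypothesis, and since $\|w_i\|_X\le1$ so is $(w_i)$. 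A semi-normalized weakly null sequence has a basic subsequence, so after a further subsequence $(w_i)$ is basic; part~(a) then shows $(y_i)$ is equivalent to the basic sequence $(w_i)$, and since isomorphisms are weak-to-weak continuous, $(y_i)$ is weakly null. If instead $\|w_i\|_X\to0$, pass to a subsequence with $\sum_i\|w_i\|_X<\infty$. For every finite $F$, every choice of signs $(\varepsilon_i)$, and every interval $[m,n]$, at most two of the blocks $I_i$, $i\in F$, straddle an endpoint of $[m,n]$ without being contained in it, so $S_{Min}\big(P_{[m,n]}\sum_{i\in F}\varepsilon_i y_i\big)$ is a sum of some of the $\pm w_i$ (of total norm $\le\sum_i\|w_i\|_X$) plus at most two vectors of norm $\le1$ (again by bi-monotonicity and $\|S_{Min}\|=1$); hence $\big\|\sum_{i\in F}\varepsilon_i y_i\big\|_{Min}\le\sum_i\|w_i\|_X+2$. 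Thus $\sum y_i$ is weakly unconditionally Cauchy, so $\sum_i|\psi(y_i)|<\infty$ for all $\psi\in Z_{Min}^*$, and in particular $(y_i)$ is weakly null. In either case $(y_i)$ has a weakly null subsequence, so $(e_i^{Min})$ is shrinking.

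The step I expect to cause the most trouble is the degenerate case $\|w_i\|_X\to0$ of part~(b): there $S_{Min}$ collapses the block vectors, so part~(a) is of no use, and one must instead exploit the combinatorial fact that a single interval can straddle at most two consecutive blocks, which is exactly what forces $\sum y_i$ to be weakly unconditionally Cauchy and hence weakly null. Part~(a) itself should be routine once the max-over-intervals definition of $\|\cdot\|_{Min}$ is unfolded; the only care needed there is bookkeeping the constant, which comes out precisely as $2K/a+K$ by charging the two boundary coefficients $|a_p|,|a_q|$ against $\|w_p\|_X,\|w_q\|_X\ge a$.
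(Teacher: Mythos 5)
Your proof is correct. Part (a) is the same argument as the paper's: unfold the max-over-intervals definition of $\|\cdot\|_{Min}$, split the extremal interval into two boundary fragments of blocks plus full middle blocks, bound the fragments by $|a_p|,|a_q|$ via $\|S_{Min}\|\le 1$ and bi-monotonicity, and charge these against $\|w_p\|,\|w_q\|\ge a$ using the projection constant, giving $2K/a+K$. In part (b) your overall structure (reduce to showing every normalized block basis of $(e_i^{Min})$ has a weakly null subsequence, then split according to whether $\|S_{Min}(y_i)\|$ stays bounded away from $0$) also matches, and your first case is the paper's, with the Bessaga--Pe\l czy\'nski selection step made explicit. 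The genuine divergence is the degenerate case $\|S_{Min}(y_i)\|\to 0$: the paper exploits that each $y_i$ has Min-norm $1$ attained on some interval $[m_i,n_i]$, splits $y_i=y_i^{(1)}+y_i^{(2)}$ so that both images $w_i^{(1)}$ and $w_i^{(2)}=S_{Min}(y_i)-w_i^{(1)}$ are semi-normalized block sequences of $(x_i)$, and then applies the weak-nullness hypothesis together with part (a) to each piece; you instead pass to a subsequence with $\sum\|w_i\|<\infty$ and show directly that $\sum y_i$ is weakly unconditionally Cauchy (a fixed interval can straddle at most two blocks, so all sign-combinations are uniformly bounded by $\sum\|w_i\|+2$), whence $(y_i)$ is weakly null. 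Your route is a bit more elementary in that case: it bypasses part (a), the hypothesis on $(x_i)$, and the verification that the two pieces are semi-normalized and basic, at the price of invoking the standard WUC criterion (which you could also replace by the direct sign-choice estimate $\sum_{i\in F}|\psi(y_i)|\le\|\psi\|(\sum\|w_i\|+2)$). Both arguments are sound; the paper's has the feature of reusing (a) uniformly in both cases.
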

\begin{proof}
 Let $S_{Min}:Z_{Min}\to X$ be defined as in Definition \ref{minimal.def}.

\noindent
(a) For $i\in\N$, write
$$y_i=\sum_{j=k_{i-1}+1}^{k_i} \beta_j^{(i)} e_j^{Min},\text{ with $0=k_0<k_1<k_2\ldots$ and $\beta_j^{(i)}\in\R$, for $i,j\in\N$},$$
and set
$$w_i=S_{Min}(y_i)=\sum_{j=k_{i-1}+1}^{k_i} \beta_j^{(i)} x_j.$$

Let $(a_i)\in\coo$. We use the definition of $Z_{Min}$ to find
  $1\le i_1\le i_2+1$ and $\ell_1\in[k_{i_1-1}+1,k_{i_1}]$
 and $\ell_2\in[k_{i_2}+1,k_{i_2+1}]$  in $\N$ so that, when $i_1\le
 i_2-1$,
\begin{align*}
\Big\|\sum a_i w_i\Big\|&\le \Big\|\sum a_i y_i\Big\|   \text{\ \ (Since $\|S_{Min}\|\le 1$)}\\
&=\Big\|a_{i_1} \sum_{j=\ell_1}^{k_{i_1}} \beta_j^{(i_1)} x_j
 +\sum_{s=i_1+1}^{i_2} a_s w_s+a_{i_2+1}\sum_{j=k_{i_2}+1}^{\ell_2} \beta_j^{(i_2)} x_j\Big\|\\
&\le \Big\|a_{i_1} \sum_{j=\ell_1}^{k_{i_1}} \beta_j^{(i_1)} x_j
\Big\|
 + \Big\|\sum_{s=i_1+1}^{i_2} a_s w_s \Big\|+ \Big\|a_{i_2+1}\sum_{j=k_{i_2}+1}^{\ell_2} \beta_j^{(i_2)} x_j\Big\|\\
&\le |a_{i_1}|\| y_{i_1}\|+ |a_{i_2+1}|\| y_{i_2+1}\|+K\Big\|\sum a_i w_i\Big\|\\
&\le |a_{i_1}|+ |a_{i_2+1}|+K\Big\|\sum a_i w_i\Big\|
 \le \Big(\frac{2K}a+K\Big)\Big\|\sum a_i w_i\Big\|.
\end{align*}
The other two cases $i_1=i_2$ and $i_1=i_2+1$ can be obtained in
similar way.

\noindent
(b) Assume that $(y_i)$ is a normalized block sequence of
$(e_i^{Min})$.
 For $i\in\N$, we write
$$y_i=\sum_{j=k_{i-1}+1}^{k_i} a_j e_j^{Min},\text{ with $0=k_0<k_1<k_2\ldots$ and $a_j\in\R$}.$$
Then, by definition of the space $S_{Min}$, $(S_{Min}(y_i))$ is a
bounded block sequence of $(x_i)$. It is enough to show that $(y_i)$
has a weakly null subsequence.

If $\liminf\limits_{i\to\infty}\|S_{Min}(y_i)\|>0$, then our claim
follows from (a). In the case that
$\lim\limits_{i\to\infty}\|S_{Min}(y_i)\|=0$, we use the definition
of $Z_{Min}$ to find
 $k_0<m_1\le n_1\le k_1<m_2\le n_2<\ldots$ so that for all $i\in\N$,
$1=\|y_i\|=\big\|\sum\limits_{j=m_i}^{n_i} a_i x_i\big\|.$ Thus, by
(a), the sequences $(w_i^{(1)})$ and  $(w_i^{(2)})$  with
 $$w_i^{(1)}=\sum_{j=m_i}^{n_i} a_j x_j\text{ and }
w_i^{(2)}=S_{Min}(y_i)- \sum_{j=m_i}^{n_i} a_j
x_j=\sum_{j=k_{i-1}+1}^{k_i} a_j x_j- \sum_{j=m_i}^{n_i} a_j x_j \,
\text{ for $i\in\N$},$$ both can, after passing to a further
subsequence, be assumed to be semi-normalized and, by hypothesis,
are weakly null, which implies that we can, after passing to a
subsequence again, also assume that they are basic. Claim (a)
implies that the sequences
 $(y_i^{(1)})$ and  $(y_i^{(2)})$  with
 $$y_i^{(1)}=\sum_{j=m_i}^{n_i} a_j e_j^{Min}\text{ and }
y_i^{(2)}=\sum_{j=k_{i-1}}^{k_i} a_j e_j^{Min}- \sum_{j=m_i}^{n_i}
a_j e_j^{Min} \, \text{ for $i\in\N$},$$ are weakly null in
$Z_{Min}$, which implies that $(y_i)$ is weakly null.
\end{proof}

\begin{proof}[Proof of Proposition \ref{shr.bdd}]
(a) It can be directly obtained by Lemma \ref{L:3.2} (b).

\noindent (b) Denote by $(e_i^*)$ the coordinate functionals of
$(e_i^{Max})$. Since $(x_i,f_i)$ is boundedly complete Proposition
\ref{associated.maximal.pp1} (c)  yields that $Z_{Max}$ is  an
associated space. Let $T_{Max}: X\to Z_{Max}$ be the  associated
decomposition operator, and recall that by Proposition \ref{P:2.7a},
$T_{Max}^*(e_i^*)=f_i$, for $i\in\N$. Then for any $(a_i)\in \coo$,
\begin{equation}
\max_{m\le n}\Big\|\sum_{i=m}^n a_i f_i\Big\|=\max_{m\le
n}\Big\|\sum_{i=m}^n a_i T_{Max}^*(e_i^*)\Big\|\le \|T_{Max}^*\|
\max_{m\le n}\Big\|\sum_{i=m}^n a_i e_i^*\Big\|\le K \|T_{Max}^*\|
\Big\|\sum a_i e_i^*\Big\|,
\end{equation}
where $K$ is the projection constant of $(e_i^*)$. Moreover,
\begin{eqnarray*}
\Big\|\sum a_i e_i^*\Big\|&=&\sup_{\stackrel{(b_i)\in\coo}{\|\sum
b_i e_i^{Max}\|\leq 1}} \Big|\sum a_i b_i\Big|\\ &\leq&
\sup_{\stackrel{(b_i)\in\coo}{\|\sum b_i e_i^{Max}\|\leq 1}} \quad
\sup_{\stackrel{(c_i)\in\coo}{\max\limits_{m\le
n}\|\sum\limits_{i=m}^n c_i f_i\|\leq \max\limits_{m\le
n}\|\sum\limits_{i=m}^n a_i f_i\|}} \Big|\sum c_i
b_i\Big| \\
&=&\max\limits_{m\le n}\Big\|\sum\limits_{i=m}^n a_i f_i\Big\|
\sup_{\stackrel{(b_i)\in\coo}{\|\sum b_i e_i^{Max}\|\leq 1}} \ \
\sup_{\stackrel{(c_i)\in\coo}{\max\limits_{m\le
n}\|\sum\limits_{i=m}^n c_i f_i\|\leq 1}} \Big|\sum c_i
b_i\Big|\\
&=&\max\limits_{m\le n}\Big\|\sum\limits_{i=m}^n a_i f_i\Big\|
\sup_{\stackrel{(b_i)\in\coo}{\|\sum b_i e_i^{Max}\|\leq
1}}\Big\|\sum b_i e_i^{Max}\Big\|  \leq \max\limits_{m\le
n}\Big\|\sum\limits_{i=m}^n a_i f_i\Big\|.
\end{eqnarray*}
Thus, $(e_i^*)$ is equivalent
 to the minimal basis with respect to $(f_i)\subset X^*$. By Proposition \ref{P:2.5b}
  $(f_i,x_i)$ is a frame for $\overline{\text{span}{(f_i:i\in\N})}$.
 $(e_i^{Min})$ with respect to $(f_i)$ in
$X^*$ constructed in Lemma \ref{L:3.2}. Since  by assumption
  $\| x^{**}|_{\text{\rm span}(f:i:i\ge n)}\|\to0$, if $n\to\infty$,
every normalized block
sequence of $(f_i)$ is weakly null. Therefore Lemma \ref{L:3.2} (b) yields that
 $(e_i^*)$  is shrinking. Thus, $(e_i^{Max})$
is boundedly complete, which proves our claim.
\end{proof}

We are now  ready to present a proof of our main theorems:

\begin{proof}[Proof of Theorem \ref{shrinking.1}]

\noindent (a)$\Rightarrow$(b) It is clear that (a) implies (b)(i),
while (b)(ii) follows from
 (a) and the fact that the frame representation (\ref{def.frame.eq1}) implies that
 every sequence $(u_n)\subset B_X$ for which $\lim\limits_{n\to\infty}f_i(u_n)=0$, whenever $i\in\N$,
 has a subsequence which is an arbitrary small perturbation of a block sequence of $(x_i)$ in $B_X$.

\noindent
(b)$\Rightarrow$(c) By Proposition \ref{basic.pp1} (c), every $f\in
X^*$ can be written as
 $$f=w^*-\lim\limits_{n\to\infty}\sum_{i=1}^n f(x_i) f_i.$$
If for some $f$, this sum did not converge in norm, we could find a
sequence $(u_k)\subset B_X$ and $m_1\le n_1<m_2\le n_2< . . .$ in
$\N$ and $\vp>0$ so that for all $k\in\N$,
\begin{equation}\label{E:3.1.1}
f\Big(\sum_{i=m_k}^{n_k} f_i(u_k) \, x_i\Big)=\sum_{i=m_k}^{n_k}
f(x_i) f_i(u_k)\ge \frac{1}{2}\Big\|\sum_{i=m_k}^{n_k} f(x_i)
f_i\Big\| \ge \frac{\vp}{2}.\end{equation} By Proposition
\ref{basic.pp1} (b), $(\tilde u_k)\subset K\cdot B_X$, where $\tilde
u_k=\sum\limits_{i=m_k}^{n_k} x_i f_i(u_k)$,  for $k\in\N$.
Thus, $\tilde u_k$ is a bounded block sequence of $(x_i)$ , which contradicts (b)(ii)

\noindent
(c)$\Rightarrow$(d) trivial.

\noindent
(d)$\Rightarrow$(a) by Proposition \ref{self}. Thus we verified
(a)$\Leftrightarrow$(b)$\Leftrightarrow$(c)$\Leftrightarrow$(d).

\noindent
(a)$\Leftrightarrow$(e) by Proposition \ref{shr.bdd} (a).

\noindent
(e)$\Leftrightarrow$(c)  by
Proposition \ref{bdd.2} (a).
\end{proof}

\begin{proof}[Proof of Theorem \ref{cor.bdd}]

\noindent
(a)$\Rightarrow$(b)  by Proposition \ref{pre.pre}.

\noindent
(b)$\Rightarrow$(c) trivial.

\noindent
(c)$\Rightarrow$(d) Let $Y=\overline{\textrm{span}(f_i:i\in\N)}$.
Define $J: X\rightarrow Y^*$ by $J(x): f \mapsto f(x)$, which is the
natural canonical map. Then we have $$\|J(x)\|=\sup\limits_{f\in
B_Y} |J(x)f|=\sup\limits_{f\in B_Y} |f(x)|\leq \|x\|,$$ which
implies that $J$ is a bounded linear operator. Next we will show
that $J$ is bijective. Since, by Proposition \ref{norming}, $Y$ is a
norming set of $X$, $J$ is injective. On the other hand, any
$y^{*}\in Y^*$, can,  by the Hahn-Banach Theorem, be extended it to an element  $x^{**}\in X^{**}$.
Then by hypothesis, there is an $x\in X$ such that
$x=\lim\limits_{n\rightarrow\infty}\sum\limits_{i=1}^n x^{**}(f_i)
x_i$ under the topology $\sigma(X, Y)$. Thus, for any $f\in Y$,
\begin{equation}
J(x)(f)=f(x)=\lim_{n\rightarrow\infty} f\Big(\sum_{i=1}^n
x^{**}(f_i)
x_i\Big)=\lim_{n\rightarrow\infty}x^{**}\Big(\sum_{i=1}^n f(x_i)
f_i\Big)=x^{**}(f),
\end{equation}
which implies that $J$ is surjective. Then by the Banach Open
Mapping Principle, $J$ is an isomorphism from $X$ onto $Y^*$.

\noindent
(d)$\Rightarrow$(a) Let $x^{**}\in X^{**}$ and put $f^*=x^{**}|_Y\in Y^*$
(i.e.$f^*(f)=x^{**}(f)$ for $f\in Y$). By assumption (d) there is an $x\in X$ so that
$f(x)= f^*(f)=x^{**}(f)$ for ll $f\in Y$. Thus
 (a) follows from Proposition \ref{self}.

 Note we have now verified the equivalences
 (a)$\Leftrightarrow$(b)$\Leftrightarrow$(c)$\Leftrightarrow$(d).

\noindent (a)$\Rightarrow$(e) by Proposition \ref{shr.bdd}.

\noindent (e)$\Rightarrow$ (b) by Proposition \ref{bdd.2} (b) and Theorem \ref{associated.maximal.pp1} (c)
\end{proof}

\begin{ex}\label{Ex:3.3}
 The following example shows that there is
a semi-normalized tight Hilbert frame for $\ell_2$ satisfying
(b)(ii) and (d)(ii) in Proposition \ref{shrinking.1} but not
condition (b)(i).

Choose $c>0$ and $(c_i)\subset(0,1)$ so that
\begin{equation}\label{E:3.3.1}
 c^2+\sum c_i^2=1\text{ and } \sum c_i=\infty
\end{equation}
In $\ell_2$  put  $x_1=ce_1$ and for $i\in\N$
$$x_{2i}=\frac1{\sqrt2}e_{i+1} +\frac{c_i}{\sqrt2} e_1\text{ and }x_{2i+1}=\frac1{\sqrt2}e_{i+1}
 -\frac{c_i}{\sqrt2} e_1.$$
It follows for any $x=\sum a_i e_i\in\ell_2$ that
\begin{align*}
\sum_{i=1}^\infty \langle x_i, x\rangle^2&= c^2
a_1^2+\frac12\sum_{j=2}^\infty
 (a_j+c_{j-1} a_1)^2+ (a_j-c_{j-1} a_1)^2\\
&= c^2a_1^2+\sum_{j=2}^\infty a_j^2+a_1^2\sum_{j=1}^\infty
c_j^2=\|x\|^2,
\end{align*}
Thus, $(x_i)$ is a tight frame, which implies (b)(ii), (c)(ii) and
(d)(ii).

Using the second part of (\ref{E:3.3.1}) we can choose
$0=n_0<n_1<n_2<\ldots$ so that
$$\lim_{i\to\infty} y_i=e_1, \text{ where }y_i=\sum_{j=n_{i-1}+1}^{n_i}x_{2j}- x_{2j+1} \text{ for }i\in\N,$$
which implies that (b)(i) is not satisfied.
\end{ex}

\begin{prop}\label{pre.shr}
Let $(x_i,f_i)$ be a Schauder frame of a Banach space $X$. Then the
following conditions are equivalent:
\begin{enumerate}
\item[a)] $(x_i,f_i)$ is a pre-shrinking Schauder frame of $X$.
\item[b)] $(f_i,x_i)$ is a pre-boundedly complete Schauder frame of $X^*.$
\item[c)] $(f_i,x_i)$ is a pre-boundedly complete Schauder frame of $\overline{\textrm{span}(f_i:i\in\N)}.$
\end{enumerate}
\end{prop}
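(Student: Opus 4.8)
The plan is to prove the cycle (a)$\Rightarrow$(b)$\Rightarrow$(c)$\Rightarrow$(a), unwinding the definitions of pre-shrinking and pre-boundedly complete and using that $(f_i,x_i)$ is a $w^*$-Schauder frame of $X^*$ (Proposition \ref{basic.pp1}(b)). The key point to keep track of is the ambient space in each statement: in (a) the relevant frame is $(f_i,x_i)$ acting on $X^*$, whose ``second dual'' in the pre-boundedly-complete sense is $X^{**}$; in (c) the frame $(f_i,x_i)$ lives on $Y:=\overline{\text{span}}(f_i:i\in\N)$, whose second dual $Y^{**}$ sits inside $X^{**}$ in a controlled way precisely because $Y$ is norming (Proposition \ref{norming}).

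For (a)$\Rightarrow$(b): if $(x_i,f_i)$ is pre-shrinking then $(f_i,x_i)$ is a genuine (norm-convergent) Schauder frame of $X^*$, so by Proposition \ref{basic.pp1}(b) applied to this frame, $(x_i,f_i)$ is a $w^*$-Schauder frame of $X^{**}$; that is, for every $\xi\in X^{**}$ one has $\xi = w^*\text{-}\lim_n \sum_{i=1}^n \xi(f_i)x_i$, and moreover the partial sums $\sum_{i=1}^n \xi(f_i)x_i$ are uniformly bounded by $K'\|\xi\|$, where $K'$ is the projection constant of $(f_i,x_i)$ (this is exactly the content of the estimate in Proposition \ref{basic.pp1}(c), with the roles of $(x_i)$ and $(f_i)$ swapped). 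Pre-bounded completeness of $(f_i,x_i)$ as a frame of $X^*$ is then the assertion that for every $\xi$ in the dual of $X^*$, i.e.\ $\xi\in X^{**}$, the series $\sum \xi(f_i)x_i$ converges in norm; but the partial sums form a bounded sequence in $X$ whose $w^*$-limit in $X^{**}$ is $\xi$, and since the tails $\sum_{i=m}^n \xi(f_i)x_i$ have norm dominated by $K'\|\xi|_{\text{span}(f_i:i\ge m)}\|$ — wait, this last estimate is what one would want, but it need not tend to $0$ without further hypotheses. So instead I would argue directly: pre-shrinking means $(f_i,x_i)$ is a Schauder frame of $X^*$, and by the \emph{definition} of pre-boundedly complete applied verbatim to the frame $(f_i,x_i)$ of $X^*$, we need exactly that for all $\xi\in(X^*)^*=X^{**}$, $\sum \xi(f_i)x_i$ converges — which is a nontrivial statement, so (a)$\Rightarrow$(b) is the substantive implication and must be derived from the fact that a Schauder frame representation holds on $X^*$ itself, forcing norm convergence on $X^{**}$ via a uniform boundedness/telescoping argument modeled on the proof of Proposition \ref{P:2.5b}.

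For (b)$\Leftrightarrow$(c): the implication (b)$\Rightarrow$(c) is immediate since $Y\subset X^*$ and the defining series, partial sums, and functionals $x_i$ restrict; one only needs that $(f_i,x_i)$ is indeed a Schauder frame of $Y$, which holds because $(x_i,f_i)$ pre-shrinking gives, via Proposition \ref{P:2.5b}, $Y=\overline{\text{span}}(f_i:i\in\N)$ with $(f_i,x_i)$ a frame for $Y$ — here I would also invoke local shrinking if needed, but in fact pre-shrinking alone gives $f=\sum f(x_i)f_i$ for every $f\in X^*$ hence for every $f\in Y$. For (c)$\Rightarrow$(a): assuming $(f_i,x_i)$ is a pre-boundedly complete frame of $Y$, take any $f\in X^*$; then $f|_{\text{something}}$ — more precisely, one shows $f=\sum f(x_i)f_i$ by testing against $X^{**}$. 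Given $\xi\in X^{**}$, restrict to $\xi|_Y\in Y^*=(Y)^*$; pre-bounded completeness of $(f_i,x_i)$ on $Y$ says $\sum (\xi|_Y)(f_i)x_i=\sum\xi(f_i)x_i$ converges in norm to some $x\in X$, and then for all $f\in Y$, $f(x)=\lim\sum\xi(f_i)f(x_i)=\lim\xi(\sum f(x_i)f_i)=\xi(f)$ using the $w^*$-frame expansion of $f$; since $Y$ is norming, this pins down $x$ and shows the canonical map $X\to Y^*$ has the relevant surjectivity, from which pre-shrinking ($f=\sum f(x_i)f_i$ in norm for all $f\in X^*$) follows by a duality computation as in the proof of Proposition \ref{bdd.2}(a).

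The main obstacle I anticipate is the careful bookkeeping in (a)$\Rightarrow$(b): one must be certain that ``pre-boundedly complete'' for the dual frame $(f_i,x_i)$ of $X^*$ really does follow from $(f_i,x_i)$ being a norm-convergent Schauder frame of $X^*$, and this requires reconstructing the $w^*$-frame expansion on $X^{**}$ together with the uniform bound on partial sums, then running the telescoping/Cauchy argument of Proposition \ref{P:2.5b} in $X^{**}$ — noting that the partial sums land in $X$ so norm convergence in $X^{**}$ is norm convergence in $X$. The remaining implications are essentially definition-chasing once Proposition \ref{P:2.5b} and Proposition \ref{norming} are in hand.
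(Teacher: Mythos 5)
There is a genuine gap, and it comes from a misreading of what ``pre-boundedly complete'' means for the dual frame. For the frame $(f_i,x_i)$ of $X^*$ the \emph{vectors} are the $f_i$ and the \emph{functionals} are the $x_i$ (viewed in $X^{**}$), so by the paper's definition pre-bounded completeness of $(f_i,x_i)$ asserts: for every $\xi\in (X^*)^{**}=X^{***}$ the series $\sum \xi(x_i)\,f_i$ converges in norm in $X^*$. You instead quantify over $\xi\in (X^*)^*=X^{**}$ and demand norm convergence of $\sum \xi(f_i)\,x_i$ in $X$ --- but that is pre-bounded completeness of the \emph{original} frame $(x_i,f_i)$ of $X$, a strictly stronger condition that does not follow from (a): for the unit vector basis of $c_0$ (which is shrinking, so (a), (b), (c) all hold) the series $\sum \xi(e_i^*)e_i$ diverges for $\xi=(1,1,1,\dots)\in\ell_\infty$. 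This is exactly why your attempted estimate in (a)$\Rightarrow$(b) ``need not tend to $0$'': the statement you are trying to prove there is false, and no telescoping or uniform-boundedness argument modeled on Proposition \ref{P:2.5b} can rescue it. With the correct reading, (a)$\Rightarrow$(b) is essentially immediate, and this is the paper's argument: given $\xi\in X^{***}$, its restriction $\xi|_X$ lies in $X^*$ and $\xi(x_i)=(\xi|_X)(x_i)$, so $\sum \xi(x_i)f_i$ is precisely the frame expansion of $\xi|_X$, which converges in norm by (a).

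The same misreading breaks your (c)$\Rightarrow$(a). Pre-bounded completeness of $(f_i,x_i)$ on $Y=\overline{\text{span}}(f_i:i\in\N)$ quantifies over $\eta\in Y^{**}$ and yields norm-convergent series $\sum \eta(x_i|_Y)\,f_i$ \emph{in $Y$}; it says nothing about convergence of $\sum \xi(f_i)x_i$ in $X$ for $\xi\in X^{**}$ (again false in the $c_0$ example, where (c) holds), so the step ``pre-bounded completeness of $(f_i,x_i)$ on $Y$ says $\sum \xi(f_i)x_i$ converges to some $x\in X$'' is an invalid deduction, and what it would give you is the surjectivity statement relevant to bounded completeness (Theorem \ref{cor.bdd}), not pre-shrinking. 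The correct route, which your sketch gestures at but inverts, is: given $f\in X^*$, use Proposition \ref{norming} to embed $X$ canonically into $Y^*$, regard $f$ as a functional on the image and extend it by Hahn--Banach to some $\eta\in Y^{**}$; then $\eta(x_i|_Y)=f(x_i)$, so (c) gives norm convergence of $\sum f(x_i)f_i$ in $Y\subset X^*$, and Proposition \ref{basic.pp1}(b) identifies the limit as $f$, which is (a). Your (b)$\Rightarrow$(c) is fine (under (b) one has $X^*=\overline{\text{span}}(f_i:i\in\N)$, so (c) is literally (b)), but both substantive implications need to be redone with the correct dual space and the correct series.
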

\begin{proof} (a)$\Rightarrow$(b) Assume that
$(f_i,x_i)$ is a Schauder frame of $X^*.$ For any $x^{***}\in
X^{***}$, $x^{***}|_{X}$ is a continuous linear functional on $X$.
Then $\sum\limits_{i=1}^\infty
x^{***}(x_i)f_i=\sum\limits_{i=1}^\infty x^{***}|_X (x_i)f_i$
converges in $X^*$, which completes the claim.

\noindent
(b)$\Rightarrow$(c) is trivial.

\noindent
(c)$\Rightarrow$(b) Let $Y=\overline{\textrm{span}(f_i:i\in\N)}.$
 and let  $f\in X^*$.  By Proposition \ref{norming}. $X$ can be
isomorphically embedded into $Y^*$ under the natural canonical map.
By the Hahn-Banach Theorem, extend $f$ to an element in $Y^{**}$ and, thus, assumption  (c) yields
 that $\sum\limits_{i=1}^\infty f(x_i)f_i$ converges in $Y.$ Since this series converges
  in $w^*$ to $f$ by
Proposition \ref{basic.pp1} this completes the proof.\end{proof}

\begin{prop}\label{reflexive}
 Let $(x_i,f_i)$ be a Schauder frame of a Banach space $X$.

If $(x_i,f_i)$ is pre-shrinking and pre-boundedly complete, then
$X$ is reflexive.
\end{prop}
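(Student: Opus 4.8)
The plan is to show that the canonical embedding $J\colon X\to X^{**}$ (given by $J(x)(f)=f(x)$) is onto; since $J$ is always an isometry, surjectivity is exactly reflexivity of $X$. So I fix an arbitrary $x^{**}\in X^{**}$ and aim to produce an $x\in X$ with $J(x)=x^{**}$, i.e. $f(x)=x^{**}(f)$ for every $f\in X^*$.

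First I would use the hypothesis that $(x_i,f_i)$ is pre-boundedly complete (Definition \ref{def.local}): the series $\sum_{i=1}^\infty x^{**}(f_i)\,x_i$ converges in norm in $X$, and I let $x$ denote its sum. This is the candidate preimage. Next, fixing $f\in X^*$, I would compute $f(x)$ and $x^{**}(f)$ from the two ``dual'' series expansions. Since $f$ is norm continuous and $x=\|\cdot\|\text{-}\lim_n\sum_{i=1}^n x^{**}(f_i)x_i$, we get
$$f(x)=\sum_{i=1}^\infty x^{**}(f_i)\,f(x_i).$$
On the other hand, pre-shrinking means $(f_i,x_i)$ is a frame of $X^*$, so $f=\|\cdot\|\text{-}\lim_n\sum_{j=1}^n f(x_j)f_j$ in $X^*$; applying the bounded functional $x^{**}$ to this norm-convergent series yields
$$x^{**}(f)=\sum_{j=1}^\infty f(x_j)\,x^{**}(f_j).$$
The two right-hand sides are literally the same numerical series, so $f(x)=x^{**}(f)$. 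As $f\in X^*$ was arbitrary, $J(x)=x^{**}$, and since $x^{**}$ was arbitrary, $J$ is surjective; hence $X$ is reflexive.

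I do not expect a genuine obstacle here: the whole argument is a short computation once the candidate $x$ is produced from pre-boundedly completeness and $f$ is expanded via pre-shrinking. The only points needing (routine) justification are the two interchanges of a bounded linear functional with a norm-convergent series, both immediate from continuity; note in particular that no rearrangement or Fubini-type interchange of a double sum is required, since the expressions for $f(x)$ and $x^{**}(f)$ coincide term by term as the single series $\sum_i x^{**}(f_i)f(x_i)$. The real content is simply that the notions of pre-shrinking and pre-boundedly complete have been set up so as to fit together in exactly this way.
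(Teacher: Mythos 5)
Your argument is correct and is essentially the paper's own proof: use pre-boundedly completeness to define $x=\sum x^{**}(f_i)x_i$, use pre-shrinkingness to expand $f=\sum f(x_i)f_i$, and observe that both $f(x)$ and $x^{**}(f)$ equal the single series $\sum_i x^{**}(f_i)f(x_i)$, so the canonical map $J\colon X\to X^{**}$ is onto. Your write-up merely spells out the two continuity justifications that the paper leaves implicit.
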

\begin{proof} Since $(x_i,f_i)$ is pre-shrinking we can write every $f\in X^*$ as
$f=\sum f(x_i) f_i$. Since $(x_i,f_i)$ pre-boundedly complete we can choose for each
$x^{**}\in X^{**}$ an $x\in X$ so that
$x=\sum x^{**}(f_i) x_i$. Thus for any $f\in X^*$
 $$x^{**}(f)=\sum  f(x_i)x^{**}(f_i)=f(x),$$
 which proves our claim.
\end{proof}

\section{Unconditional Schauder Frames}

The following result extends James \cite{Ja}  well known result on
unconditional bases to unconditional frames.

\begin{thm}\label{un.bdd}
Let $(x_i, f_i)$ be an unconditional and locally shrinking Schauder
frame of a Banach space $X$.
\begin{enumerate}
\item[(a)]
If $(x_i,f_i)$ is not pre-boundedly complete, then $X$ contains an
isomorphic copy of $c_0.$
\item[(b)]
If $(x_i, f_i)$ is not shrinking, then $X$ contains an isomorphic
copy of $\ell_1.$
\end{enumerate}
\end{thm}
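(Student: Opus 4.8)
The plan is to adapt the classical James argument for unconditional bases, replacing the basis by the unconditional frame $(x_i,f_i)$ and working with bounded block sequences of $(x_i)$ rather than blocks of a basis. Throughout, let $K_u$ be the unconditional constant from Proposition \ref{basic.pp1}(a)(ii), so that for every $x\in X$ and every choice of signs $(\sigma_i)$, $\|\sum \sigma_i f_i(x)x_i\|\le K_u\|x\|$, and similarly $\|\sum_{i\in A} f_i(x)x_i\|\le K_u\|x\|$ for every $A\subset\N$. The two statements are proved by contrapositive: assuming (a)'s failure (not pre-boundedly complete) we build a copy of $c_0$, and assuming (b)'s failure (not shrinking, hence by local shrinkingness not pre-shrinking) we build a copy of $\ell_1$.

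For part (a): if $(x_i,f_i)$ is not pre-boundedly complete, there is $x^{**}\in X^{**}$ with $\sum x^{**}(f_i)x_i$ divergent. Because the partial-sum operators are uniformly bounded (projection constant $K$) and, by unconditionality, the ``subsequence-sum'' operators $x\mapsto \sum_{i\in A}f_i(x)x_i$ are uniformly bounded by $K_u$, a divergent series whose partial sums stay bounded must fail Cauchy, so I can extract successive blocks $y_k=\sum_{i\in I_k}x^{**}(f_i)x_i$ (consecutive finite intervals $I_1<I_2<\cdots$) with $\|y_k\|\ge\delta>0$. Each $y_k$ is (a limit in $w^*$ of) an element of the form $\sum_{i\in I_k}f_i(u)x_i$ for suitable $u\in K_u B_X$; the key point is that for any finite scalars $(a_k)$ with $|a_k|\le 1$, the vector $\sum_k a_k y_k$ corresponds to a signed/weighted sub-sum of the partial sums of $\sum x^{**}(f_i)x_i$, which is controlled: $\|\sum_k a_k y_k\|\le 2K_u\, \|x^{**}\|\max_k|a_k|$ (using unconditionality to absorb the coefficients, exactly as in James' argument, passing through finite approximations of $x^{**}$ on the relevant finitely many $f_i$'s). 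Combined with $\|\sum a_k y_k\|\ge$ (bimonotonicity of the frame expansion) $\ge \tfrac1K\|a_{k_0}y_{k_0}\|\ge \tfrac{\delta}{K}\max_k|a_k|$, this shows $(y_k)$ is equivalent to the unit vector basis of $c_0$, giving the desired copy of $c_0$ in $X$.

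For part (b): if $(x_i,f_i)$ is not shrinking, then since it is locally shrinking by hypothesis, it must fail to be pre-shrinking, i.e.\ there is $f\in X^*$ with $\sum f(x_i)f_i$ not norm-convergent; as in the proof (c)$\Rightarrow$(d) steps inside Theorem \ref{shrinking.1}, one obtains $\vp>0$, a bounded block sequence $\tilde u_k=\sum_{i\in I_k}f_i(u_k)x_i\in K\cdot B_X$ of $(x_i)$, and $f$ with $f(\tilde u_k)\ge\vp/2$ for all $k$. Now I use unconditionality on the target side: for any finite scalars $a_k\ge 0$, $f\big(\sum_k a_k\tilde u_k\big)\ge\tfrac{\vp}{2}\sum_k a_k$, while for arbitrary signs the vector $\sum_k a_k\tilde u_k$ is a sub-sum (with bounded coefficients) of the expansions of the $u_k$'s, hence $\|\sum_k a_k\tilde u_k\|\le K_u\max_k\|u_k\|\cdot\big(\text{something}\big)$—more precisely, applying $f$ after choosing signs $\sigma_k=\operatorname{sign}(a_k)$ gives $\tfrac{\vp}{2}\sum_k|a_k|\le f\big(\sum_k\sigma_k a_k\tilde u_k\big)\le\|f\|\,\|\sum_k\sigma_k a_k\tilde u_k\|$, and unconditionality bounds $\|\sum_k\sigma_k a_k\tilde u_k\|$ by $K_u\cdot C\cdot\|\sum_k a_k\tilde u_k\|$ for a projection-type constant $C$. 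Together with the trivial upper bound $\|\sum a_k\tilde u_k\|\le (\sup_k\|\tilde u_k\|)\cdot(\text{unconditional constant})\cdot\max\cdots$—no, rather the upper bound $\|\sum a_k\tilde u_k\|\le K_u K \sum_k|a_k|$ obtained by viewing $\sum a_k\tilde u_k$ inside the expansion of a single element—one gets $c_1\sum|a_k|\le\|\sum a_k\tilde u_k\|\le c_2\sum|a_k|$, so $(\tilde u_k)$ spans $\ell_1$.

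The main obstacle, in both parts, is the bookkeeping that converts statements about the (merely $w^*$-convergent) expansions of the functionals $x^{**}$ or $f$ into genuine norm estimates on finite blocks: one must approximate $x^{**}$ (resp.\ $f$) by ordinary functionals agreeing on the finitely many coordinates in play at each stage, and then use the uniform boundedness of both the consecutive partial-sum projections and the unconditional ``random sign/subset'' operators (Proposition \ref{basic.pp1}(a)) to push the coefficients $a_k$ inside. Once the estimate ``the norm of a block-sum is comparable to the $c_0$-norm (resp.\ $\ell_1$-norm) of its coefficients'' is in hand, the conclusion is immediate; the frame machinery of Section 2—bimonotonicity of the minimal expansion, the bound \eqref{E:2.5a.2}, and Proposition \ref{pre.pre}—supplies exactly the quantitative inputs needed.
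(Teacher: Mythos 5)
Your overall strategy is the right one (it is James' argument, and it is also the strategy the paper follows), but at the two places where the frame setting genuinely differs from the basis setting you appeal to properties that a frame does not have, and these are exactly the points the paper has to work for. In part (a), your lower estimate $\|\sum_k a_k y_k\|\ge \tfrac1K\|a_{k_0}y_{k_0}\|\ge\tfrac{\delta}{K}\max_k|a_k|$ is justified by ``bimonotonicity of the frame expansion''. For a basis this is the fact that the partial-sum projections restrict to block sequences; for a frame it fails, because the canonical coefficients of $\sum_k a_k y_k$ are $f_j\bigl(\sum_k a_k y_k\bigr)$, and these are not $a_k\,x^{**}(f_j)$ (the $f_j$ are not biorthogonal to the $x_i$), so no operator of the form $x\mapsto\sum_{i=m}^n f_i(x)x_i$ picks out the block $y_{k_0}$. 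Indeed a block sequence of $(x_i)$ need not even be basic. The paper closes this gap with Lemma \ref{Basic subseq lem} and Corollary \ref{Basic subseq}: the locally shrinking hypothesis is used to extract a subsequence of the blocks that is a basic sequence with constant about $K$, and only then does the $\max_k|a_k|$ lower bound follow. It is a warning sign that your part (a) never invokes local shrinkingness at all.

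The same issue recurs in part (b): you bound $\|\sum_k\sigma_k a_k\tilde u_k\|$ by a constant times $\|\sum_k a_k\tilde u_k\|$ ``by unconditionality''. The constant $K_u$ of Proposition \ref{basic.pp1}(a)(ii) controls $\|\sum_i\sigma_i f_i(x)x_i\|$ in terms of $\|x\|$, i.e.\ sign changes of the canonical frame coefficients of a single vector; it says nothing about sign changes of the block coefficients $a_k$, again because the $a_k$ are not frame coefficients of $\sum_k a_k\tilde u_k$. That a block basis of an unconditional, locally shrinking frame has an almost $K_u$-unconditional subsequence is precisely the content of the paper's Lemma \ref{uncond.block}, proved by a gliding-hump construction (choosing $n_i$ and $k_i$ so that the tails of the $f_s$-expansions interact negligibly with the earlier blocks), and it again uses local shrinkingness in an essential way. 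So the ``bookkeeping'' you defer at the end is in fact the substance of the proof: as written, neither the projection estimate in (a) nor the sign-change estimate in (b) follows from Proposition \ref{basic.pp1}. (Your upper bounds are fine and match the paper: the $c_0$ upper estimate via approximating $x^{**}$ on finitely many $f_i$ by elements of $B_X$ is the paper's Goldstine-type computation, and the $\ell_1$ upper estimate is the triangle inequality.)
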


Then  by Proposition \ref{reflexive} and Theorem \ref{un.bdd}, we
obtain  Theorem C.

For the proof, we need the following lemma.

\begin{lem}\label{Basic subseq lem}
Let $X$ be a separable Banach space and $(x_i,f_i)\subset X\times
X^*$ be a locally shrinking Schauder frame of $X$ with the
projection operator $K$. Let $Y$ be a finite-dimensional subspace of
$X$. Then for every $\vp>0$, there exists $N\in \N$ such that
$\|y\|\leq (K+\vp)\|y+ x\|$ whenever $x\in \text{\rm span}(x_i:i\geq
N)$ and $y\in Y$.

\end{lem}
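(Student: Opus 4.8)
The plan is to imitate the classical gliding--hump estimate for Schauder bases, with one extra twist forced on us by the fact that, for a frame, a vector $x\in\textrm{span}(x_i:i\ge N)$ need not have vanishing frame coefficients $f_1(x),\dots,f_{N-1}(x)$. Throughout write $P_m\colon X\to X$, $P_m(z)=\sum_{i=1}^m f_i(z)x_i$, so that $\|P_m\|\le K$ for every $m$ by Proposition \ref{basic.pp1}(a). We may assume $Y\neq\{0\}$, and we fix a small parameter $\delta>0$, to be chosen at the very end as a function of $K$ and $\vp$ only.

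First I would produce a single truncation level that works uniformly on $Y$. Since $Y$ is finite dimensional, $S_Y$ is norm-compact; choose a finite $\delta$-net $y_1,\dots,y_r$ of $S_Y$. For each $k$ the series $y_k=\sum_i f_i(y_k)x_i$ converges, so there is $M_k$ with $\|y_k-P_m(y_k)\|<\delta$ for all $m\ge M_k$; put $M=\max_k M_k$. A routine net estimate (for $y\in S_Y$ pick a net point $y_k$ with $\|y-y_k\|<\delta$ and use $\|P_M\|\le K$) then gives
\begin{equation*}
\|y-P_M(y)\|\le (K+2)\,\delta\,\|y\|\qquad\text{for every }y\in Y,
\end{equation*}
and in particular $\|P_M(y)\|\ge(1-(K+2)\delta)\|y\|$. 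The essential point is that $M$ is now a fixed integer, chosen \emph{before} $N$.

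Second --- and this is the only place where local shrinkingness is used --- I would control the low-frequency part $P_M(x)$ of a tail vector $x$. One must resist the temptation to bound $\sum_{j\le N-1}\|f_j|_{\textrm{span}(x_i:i\ge N)}\|\,\|x_j\|$ directly: each summand tends to $0$ as $N\to\infty$, but the number of summands also grows with $N$, so that sum need not be small. However, since the first step has committed us to the \emph{fixed} finite index set $\{1,\dots,M\}$, we only need the $M$ numbers $\|f_j|_{\textrm{span}(x_i:i\ge N)}\|$, $1\le j\le M$, to be small, and each of these does tend to $0$ as $N\to\infty$ by hypothesis. So I would choose $N>M$ with $\sum_{j=1}^{M}\|f_j|_{\textrm{span}(x_i:i\ge N)}\|\,\|x_j\|<\delta$; then $\|P_M(x)\|\le\sum_{j\le M}|f_j(x)|\,\|x_j\|\le\delta\|x\|$ for every $x\in\textrm{span}(x_i:i\ge N)$.

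Finally I would combine the two estimates into a self-referential inequality. For $y\in Y$ and $x\in\textrm{span}(x_i:i\ge N)$ set $z=y+x$. By linearity $P_M(y)=P_M(z)-P_M(x)$, and using $\|P_M(z)\|\le K\|z\|$ together with $\|x\|\le\|z\|+\|y\|$ we get
\begin{equation*}
(1-(K+2)\delta)\|y\|\le\|P_M(y)\|\le\|P_M(z)\|+\|P_M(x)\|\le K\|z\|+\delta\bigl(\|z\|+\|y\|\bigr).
\end{equation*}
Rearranging yields $(1-(K+3)\delta)\|y\|\le(K+\delta)\|z\|$, hence $\|y\|\le\frac{K+\delta}{1-(K+3)\delta}\,\|y+x\|$. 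Since $\frac{K+\delta}{1-(K+3)\delta}\to K$ as $\delta\to 0^+$, fixing $\delta$ small enough that this quantity is at most $K+\vp$ finishes the proof. I expect the only real obstacle to be the quantifier juggling in the second step: one has to pin down the truncation level $M$ from the compactness of $S_Y$ \emph{before} letting $N\to\infty$, and this reordering is exactly what rescues the basis argument in the frame setting.
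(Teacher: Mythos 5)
Your proof is correct. It rests on the same skeleton as the paper's argument --- fix a finite $\delta$-net of the compact sphere $S_Y$, use it to pin down a single truncation level before anything else, and only then invoke local shrinkingness, which is exactly what makes the argument survive the passage from bases to frames (a tail vector $x\in\text{span}(x_i:i\ge N)$ still has nonzero low coefficients, but only the finitely many functionals $f_1,\dots,f_M$ matter, and each of their restricted norms tends to $0$). The implementation, however, is genuinely different: the paper works on the dual side, taking Hahn--Banach norming functionals $x_i^*$ for the net and replacing them by the truncations $\tilde x_i^*=\sum_{j\le k}x_i^*(x_j)f_j$ (which have norm at most $K$ by Proposition \ref{basic.pp1}(c) and still almost norm $Y$), then applies local shrinkingness to these finitely many combinations and finishes with a case split according to whether $\|x\|\ge 2\|y\|$ or not; you instead work on the primal side with the partial-sum operator $P_M$, showing $P_M\approx I$ on $Y$ and $\|P_M(x)\|\le\delta\|x\|$ on the tail, and close with a single algebraic rearrangement of $(1-(K+2)\delta)\|y\|\le(K+\delta)\|y+x\|+\delta\|y\|$. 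Your version avoids the Hahn--Banach functionals and the case distinction, and the dependence of the final constant $\frac{K+\delta}{1-(K+3)\delta}$ on $\delta$ is explicit; the paper's version keeps the norming functionals around, which is sometimes convenient for later quantitative use, but the two proofs are dual reformulations of one another (indeed $\tilde x_i^*=x_i^*\circ P_k$), and both deliver the same constant $K+\vp$.
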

\begin{proof} W.l.o.g. $\vp<1/2$. Let $(y_i)_{i=1}^n$ be an $\frac{\vp}{8K^2}$-net of $S_{Y}$,
and $(x^*_i)_{i=1}^n\subset S_{X^*}$ with $x_i^*(y_i)=1$ for $1\le
i\le n$. For large enough $k$ it follows
 that $(\tilde x^*_i)_{i=1}^n$, with $\tilde x^*_i=\sum_{j=1}^k x^*_i(x_j) f_j$, $i=1,2,\ldots n$,
 satisfies that
 $$\|\tilde x^*_i\|\le K \ \ (1\le i\le n) \ \ \mbox{ and } \,
 \max_{1\le i\le n} |\tilde x_i^*(y)|\ge 1-\frac{\vp}{4K} \ \ \mbox{ for all } y\in S_Y.$$ It follows that
 $\|\tilde x_j\|\le K$, for $j=1,2,\ldots n$.
 Using our assumption that $(x_i,f_i)$ is locally shrinking we can choose $N\in\N$,
 so that $\|\tilde x^*_i|_{\text{span}(x_j:j\ge N)}\|\le \frac{\vp}{8K}$.

 If $y\in Y$  and $x\in \text{\rm span}(x_i:i\geq N)$  , then either $\|x\|\ge 2 \|y\|$, in which
 case $\|y+x\|\ge \|x\|-\|y\|\ge \|y\|$.
 Or $\|x\|\le 2 \|y\|$, and then
 $$K \,\|y+x\|\ge \max_{i\le n} |\tilde x^*_i(y+x)|\ge \Big(1-\frac{\vp}{4K}\Big)\|y\|
 -\frac{\vp}{8K} \|x\|\ge \Big(1-\frac{\vp}{2K}\Big)\|y\|\ge \frac{\|y\|}{1+\vp/K}.$$
\end{proof}

\begin{cor}\label{Basic subseq}
Let $X$ be a separable Banach space and $(x_i,f_i)\subset X\times
X^*$ be a locally shrinking Schauder frame of $X$ with the
projection operator $K$. Then for every
normalized block sequence $(u_i)$ of $(x_i)$ and every $\epsilon >0$,
 there is a basic subsequence of $(u_i)$ whose basis constant $K_b$
is not larger than $K+\epsilon$.
\end{cor}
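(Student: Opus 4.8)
The plan is to deduce Corollary \ref{Basic subseq} from Lemma \ref{Basic subseq lem} by a standard gliding-hump / bootstrapping argument, choosing the blocks of $(u_i)$ far enough out so that each new block is almost disjoint (in the sense of the lemma) from the span of its predecessors. First I would fix $\vp>0$ and, using $\sum_k \vp_k<\infty$ with suitably small $\vp_k$ (so that $\prod_k(K+\vp_k)/K$, or rather the resulting multiplicative error, stays below $1+\vp/K$), plan to extract a subsequence $(u_{i_k})$ of $(u_i)$ inductively. Having chosen $u_{i_1},\dots,u_{i_{k-1}}$, set $Y_{k-1}=\operatorname{span}(u_{i_1},\dots,u_{i_{k-1}})$, which is finite-dimensional; apply Lemma \ref{Basic subseq lem} to $Y_{k-1}$ with error $\vp_k$ to obtain $N_k\in\N$ such that $\|y\|\le (K+\vp_k)\|y+x\|$ whenever $y\in Y_{k-1}$ and $x\in\operatorname{span}(x_i:i\ge N_k)$. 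Since $(u_i)$ is a block sequence of $(x_i)$, all but finitely many $u_i$ are supported on $\{i\ge N_k\}$, so I can pick $i_k>i_{k-1}$ with $u_{i_k}\in\operatorname{span}(x_i:i\ge N_k)$; then every finite linear combination $\sum_{j\ge k}a_j u_{i_j}$ also lies in $\operatorname{span}(x_i:i\ge N_k)$.

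The key estimate is then that for any scalars $(a_j)$ and any $m<n$,
\begin{equation*}
\Big\|\sum_{j=1}^{m} a_j u_{i_j}\Big\|\le (K+\vp_{m+1})\,\Big\|\sum_{j=1}^{n} a_j u_{i_j}\Big\|,
\end{equation*}
because $\sum_{j=1}^{m}a_j u_{i_j}\in Y_m$ and $\sum_{j=m+1}^{n}a_j u_{i_j}\in\operatorname{span}(x_i:i\ge N_{m+1})$. This shows $(u_{i_j})$ is basic with basis constant at most $\sup_m(K+\vp_m)$. By choosing all $\vp_m<\vp$ (in fact $\vp_m\le\vp$ suffices for the stated bound, since we only need the partial-sum projections to be bounded by $K+\vp$), we get $K_b\le K+\vp$, which is exactly the claim. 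Strictly speaking the standard criterion requires the bound to hold with a fixed constant for all $m\le n$ simultaneously, and the displayed inequality with constant $K+\vp$ for every $m$ does precisely that.

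One technical point I would be careful about: the normalization. The corollary asks for a basic \emph{subsequence} of the normalized block sequence $(u_i)$, and $(u_{i_j})$ is automatically normalized since it is a subsequence of a normalized sequence, so no renormalization is needed; the basis-constant estimate above is already in terms of the norms of $u_{i_j}$ themselves. I also need $(u_i)$ to actually be an infinite block sequence (which is part of the hypothesis) so the induction never terminates. The main — and really only — obstacle is the bookkeeping of the error constants: one must arrange that the single constant $K+\vp$ dominates all the partial-sum projection norms, and this is immediate once each $\vp_k$ is taken $\le\vp$, so there is no genuine difficulty here beyond writing the induction cleanly. The heart of the matter, the perturbation estimate $\|y\|\le(K+\vp)\|y+x\|$ for $x$ supported far out, has already been done in Lemma \ref{Basic subseq lem} using local shrinkingness, and the corollary is just its iteration along a block sequence.
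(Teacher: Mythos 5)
Your proposal is correct and follows essentially the same route as the paper: iterate Lemma \ref{Basic subseq lem} along the block sequence, choosing each new block supported far enough out that $\|y\|\le (K+\vp)\|y+x\|$ holds with $y$ in the span of the previously chosen blocks and $x$ in the span of the later ones, which immediately bounds the basis constant by $K+\vp$. Your observation that no product of error constants is needed (taking each $\vp_k\le\vp$ suffices, since each partial-sum estimate compares directly with the full sum) is also exactly how the paper's one-line argument works.
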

\begin{proof}

Using at each step Lemma \ref{Basic subseq lem}
 we can choose a subsequence basis $(v_i)$ of $(u_i)$, so that  for all $N\in\N$
$$\|y+ x\|\geq \frac{\|y\|}{K+\vp} \ \ \mbox{ for all } y\in \text{\rm span}(v_i:i\leq
N) \mbox{ and } x\in \text{\rm span}(v_i:i\geq N+1).$$ It follows
then, that $(v_i)$ is basic and its basis constant does not exceed
$K+\vp$.
\end{proof}

\begin{lem}\label{uncond.block} Assume that $(x_i,f_i)$ is an unconditional  and locally shrinking frame for a Banach space $X$. Let $K_u$ be the constant of unconditionality of $(x_i,f_i)$ and let $(u_i)$ be a block basis of
$(x_i)$. For any $\vp>0$ there is a subsequence $(v_i)$ of $(u_i)$ which is $K_u+\vp$ unconditional.
\end{lem}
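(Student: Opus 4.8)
The plan is to combine the unconditionality of the frame $(x_i,f_i)$ with the perturbation technique already used in Corollary \ref{Basic subseq}, replacing the basis-constant estimate by a signed version. Recall that a block basis $(u_i)$ of $(x_i)$ need not itself be unconditional, because the ``tails'' $f_m|_{\operatorname{span}(x_i:i\ge n)}$ that fail to vanish can couple far-apart blocks; local shrinking is exactly what controls this coupling. So the first step is to record the signed analogue of Lemma \ref{Basic subseq lem}: given a finite-dimensional $Y\subset X$ and $\vp>0$, there is $N$ so that for every choice of signs and every $y\in Y$, $x\in\operatorname{span}(x_i:i\ge N)$ one has a two-sided estimate relating $\|y\|$, $\|y+x\|$ and $\|y\pm x\|$ up to factor $K_u+\vp$. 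This is proved the same way: take a finite $\vp$-net of $S_Y$, choose norming functionals, replace them by finite sums $\tilde x^*_i=\sum_{j\le k} x^*_i(x_j)f_j$ (whose norms are bounded by $K_u$ once we also allow the supremum over sign sequences, using Proposition \ref{basic.pp1}(a)(ii)), and then use local shrinking to push the restriction of each $\tilde x^*_i$ to $\operatorname{span}(x_j:j\ge N)$ below $\vp/(8K_u)$.

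Second, I would run the usual diagonal/gliding-hump selection. Enumerate the blocks; having chosen $v_1,\dots,v_N$ from $(u_i)$, apply the signed lemma with $Y=\operatorname{span}(v_i:i\le N)$ to find an index past which the next block $v_{N+1}$ may be chosen, so that for all $N$
$$\Bigl\|\sum_{i\le N}\sigma_i a_i v_i + x\Bigr\|\ \ge\ \frac{1}{K_u+\vp}\Bigl\|\sum_{i\le N} a_i v_i + x\Bigr\|\qquad\text{for all }(\sigma_i)\subset\{\pm1\},$$
$x\in\operatorname{span}(v_i:i\ge N+1)$, $(a_i)\in\coo$. Iterating this inequality across all the blocks (flipping one sign at a time and absorbing the error geometrically, exactly as in Corollary \ref{Basic subseq} for the basis constant) gives that $(v_i)$ is $(K_u+\vp)$-unconditional.

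The step I expect to be the real obstacle is the first one: getting the \emph{two-sided}, sign-uniform estimate out of local shrinking. In Lemma \ref{Basic subseq lem} one only needed a lower bound for $\|y+x\|$ in terms of $\|y\|$; here the geometric iteration that converts ``each added tail costs at most a factor $K_u+\vp$'' into ``the whole sign change costs at most $K_u+\vp$'' requires the estimate to hold with the \emph{same} constant when $x$ is itself a finite signed block sum, and to survive being applied infinitely often. The care is entirely in choosing the net fineness and the thresholds $\vp/(8K_u)$, $\vp/(4K_u)$ so that the cumulative perturbation telescopes to something still bounded by $\vp$; once the signed lemma is in hand, the selection and the final iteration are routine, mirroring the unconditional-basis argument of James together with Corollary \ref{Basic subseq}.
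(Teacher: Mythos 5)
The central gap is in your ``signed analogue'' of Lemma \ref{Basic subseq lem}. The net/norming--functional technique of that lemma only produces \emph{lower} bounds of the form $\|y+x\|\ge \|y\|/(K+\vp)$, i.e.\ it controls the head projection; a functional that nearly norms $y$ and is small on $\text{span}(x_j:j\ge N)$ says nothing about the size of the sign--flipped vector, so it cannot give the upper estimate $\bigl\|\sum_{i\le N}\sigma_i a_i v_i + x\bigr\|\le (K_u+\vp)\bigl\|\sum_{i\le N}a_i v_i + x\bigr\|$. To obtain such a two--sided, sign--uniform estimate one has to realize the sign change (approximately) as the frame multiplier $z\mapsto\sum_s\tau_s f_s(z)x_s$, whose norm is at most $K_u$, with $\tau_s$ constant on blocks of \emph{frame coordinates} $s$; and for that the frame expansions of the chosen blocks must be almost block--diagonal in those coordinates. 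This requires more than applying local shrinking to a fixed finite--dimensional $Y$: one must interlace the choice of block indices with coordinate thresholds $k_i$, as in conditions \eqref{E1} (late blocks nearly annihilated by the early functionals, via local shrinking) and \eqref{E2} (the expansion of the already--chosen head nearly supported on an initial segment of coordinates, via norm convergence of the frame expansion and compactness). Moreover, for an arbitrary head $Y=\text{span}(v_1,\dots,v_N)$ of previously chosen blocks your displayed inequality is in general false --- it is essentially the unconditionality one is trying to prove --- so the induction as you set it up is circular unless the head was itself selected with this block--diagonality built in.

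The final iteration is also not sound. Flipping one sign at a time, with each flip costing a factor $K_u+\vp$, compounds multiplicatively to $(K_u+\vp)^m$ on a support of size $m$, not to $K_u+\vp$; and the analogy with Corollary \ref{Basic subseq} is misleading, since there the estimate of Lemma \ref{Basic subseq lem} is applied once per head/tail split and no constants are compounded. (Indeed, if your displayed inequality did hold for every $N$ with constant $K_u+\vp$, no iteration would be needed at all: take $x=0$ and $N$ equal to the whole support.) The paper's proof avoids both problems: after passing to a $2K_u$-basic subsequence via Corollary \ref{Basic subseq}, it chooses $(n_i)$ and $(k_i)$ satisfying \eqref{E1}--\eqref{E2}, performs the \emph{entire} sign change in a single application of the $K_u$-bounded multiplier constant on the coordinate blocks $[k_{i-1},k_i)$, keeps all errors \emph{additive} (total at most $\vp/(4K_u)$), and converts them into a multiplicative factor using the lower bound $\|\sum\lambda_i u_{n_i}\|\ge 1/(2K_u)$. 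Your proposal is missing precisely this mechanism --- the frame multiplier together with the two--sided selection \eqref{E1}--\eqref{E2} --- and the one--shot, additive error accounting that keeps the constant at $K_u+\vp$.
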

\begin{proof} W.l.o.g. $\|x_n\|=1$, for $n\in\N$,  otherwise replace $x_n$ by $x_n/\|x_n\|$ and
$f_n$ by $f_n\|x_n\|$.  By Corollary \ref{Basic subseq} we can assume that $(u_i)$ is $2K_u$-basic
(note that the projection constant of $(x_i,f_i)$ is at most $K_u$.

Let $(\delta_i)\subset (0,1)$ with $\sum_{j>i}\delta_j< \delta_i$, $i\in\N$, and $\sum \delta_i<
\vp/8K^2_u$.
Then we choose recursively   increasing  sequences $(n_i)$ and $(k_i)$ in $\N$ so that
\begin{align}
\label{E1}&|f_s(u_{n_i})|<  \frac{\delta_i}{k_{i-1}}       \text{whenever $s\le k_{i-1}$},\text{ and }\\
\label{E2}&\Big\|\sum_{s=k_i}^N  f_s\Big( \sum_{j=1}^{i} \lambda_j u_{n_j}\Big) x_s\Big\| < \delta_{i+1}\text{ whenever $N\ge k_i$ and $(\lambda_j)_{j=1}^i \subset[-1,1]$}.
\end{align}
Indeed, assume $k_{i-1}$ was chosen ($k_0=1$). Since
 $(x_i,f_i)$ is locally shrinking,
we can choose $n_{i}$ so that \eqref{E1} is satisfied. Secondly, using the compactness of
the set $\{ \sum_{j=1}^i \lambda_j u_{n_j}:(\lambda_j)_{j=1}^i \subset[-1,1]\}$, we can choose
 $k_i$ so that \eqref{E2} is satisfied.

We are given now $(\lambda_i)\subset c_{00}$ with $\max |\lambda_i|=1$
and $(\vp_i)\subset\{-1,1\}$.
For $u=\sum \lambda_iu_{n_i}$ and $\ub=\sum \vp_i \lambda_i u_{n_i} $ we compute:
\begin{align*}
\|\ub\|&=\Big\|\sum_{s=1}^\infty f_s(\ub) x_s\Big\|\\
     & =\Big\|\sum_{i=1}^\infty \sum_{s=k_{i-1}}^{k_i-1} f_s(\ub) x_s\Big\|\\
      &\le K_u\Big\|\sum_{i=1}^\infty \vp_i \sum_{s=k_{i-1}}^{k_i-1} f_s(\ub) x_s\Big\|  \\
       &\le K_u\Big\|\sum_{i=1}^\infty  \sum_{s=k_{i-1}}^{k_i-1} \lambda_i f_s(u_{n_i}) x_s\Big\| \!+\!
           K_u\sum_{i=1}^\infty  \Big\|\sum_{s=k_{i-1}}^{k_i-1}
            f_s\Big( \sum_{j=1}^{i-1}  \vp_j\lambda_j  u_{n_j} \Big) x_s\Big\|\\
             & \qquad \!+\!K_u\sum_{i=1}^\infty \sum_{s=k_{i-1}}^{k_i-1} \sum_{j={i+1}}^\infty |f_s(u_{n_j})|    \\
             &   \le K_u\Big\|\sum_{i=1}^\infty  \sum_{s=k_{i-1}}^{k_i-1} \lambda_i f_s(u_{n_i}) x_s\Big\| +
               \frac\vp{8K_u}    +   K_u \sum_{i=1}^\infty  k_i \sum_{j=i+1}^\infty \frac{\delta_j}{k_i}\\
               &   \le K_u\Big\|\sum_{i=1}^\infty  \sum_{s=k_{i-1}}^{k_i-1} \lambda_i f_s(u_{n_i}) x_s\Big\| +
               \frac{\vp}{4K_u}.
\end{align*}
 By switching the role of $u$ and $\ub$, we compute also
 $$\Big\|\sum_{i=1}^\infty  \sum_{s=k_{i-1}}^{k_i-1} \lambda_i f_s(u_{n_i}) x_s\Big\| \le
 \Big\|\sum_{i=1}^\infty  \sum_{s=k_{i-1}}^{k_i-1} f_s(u) x_s\Big\| +\frac{\vp}{4K_u}= \|u\| +\frac{\vp}{4K_u}.$$
 Since  the basis constant of $(u_i)$  does not exceed $2K_u$
it follows that $\|\ub\|$, $\|u\|\ge \frac1{2K_u}$.
and thus
$$\|\ub\|\le \|u\|+\frac{\vp}{2K_u}\le( K_u+\vp)\|u\|,$$
which proves our claim.
\end{proof}

\begin{proof}[Proof of Theorem \ref{un.bdd}.] (a)
By  assumption, there is some $x_0^{**}\in S_{X^{**}}$ such that
$\sum_{i=1}^n x_0^{**}(f_i) x_i$ does not converge. By the Cauchy
criterion, there are $\delta>0$ and natural numbers
$p_1<q_1<p_2<q_2\cdot\cdot\cdot$ such that for
$u_j=\sum_{i=p_j}^{q_j} x_0^{**}(f_i) x_i$ we have $\|u_j\|\geq\delta$
for every $j$. By Corollary \ref{Basic subseq}, we can find a basic
subsequence $(u_{n_j})$ of $(u_j)$ with the basis constant $C>1$.
Then for every sequence $(\lambda_j)_{j=1}^m$ of scalars and every
$i\in\{1,...,m\}$, we have $ \left\|\sum_{j=1}^m \lambda_j
u_{n_j}\right\|\geq\frac{1}{2C}\|\lambda_i u_{n_i}\|\geq
\frac{\delta}{2C}|\lambda_i|. $ That is, $\|\sum_{j=1}^m \lambda_j
u_{n_j}\|\geq \frac{\delta}{2C}\|(\lambda_j)\|_\infty$.

Recall that the unconditional constant of $(x_i,f_i)$ is defined by
\begin{equation*}K_u=\sup_{x\in B_X} \sup_{(\vp_i)\subset\{\pm
1\}}\Big\|\sum_{i=1}^\infty \vp_i f_i(x) x_i\Big\|=
\sup_{x\in B_X} \sup_{(\lambda_i)\subset[-1,1]}\Big\|\sum_{i=1}^\infty \lambda_i f_i(x) x_i\Big\|<\infty
\end{equation*}
(the second ``$=$'' follows from a simple convexity argument).
Secondly we compute
\begin{align*}
\sup_{(\lambda_i)\in c_{00}\cap[-1,1]^\N} \big\| \sum \lambda_i u_i\big\|&
=\sup_{(\lambda_i)\in c_{00}\cap[-1,1]^\N} \big\| \sum_i \sum_{s=p_i}^{q_i} \lambda_i x^{**}_0(f_i)\big\|\\
&\le \sup_{x^{**}\in B_{X^{**}}} \sup_{(\lambda_s)\in c_{00}\cap[-1,1]^\N} \big\| \sum_s  \lambda_s x^{**}(f_i) x_i\big\| \\
 &= \sup_{x\in B_{X}} \sup_{(\lambda_s)\in c_{00}\cap[-1,1]^\N} \big\| \sum \lambda_s f_s(x) x_i\big\| =K_u.
\end{align*}

 \noindent (b) Since $(x_i, f_i)$ is not shrinking,  there exists $f\in S_{X^*}$
 and a normalized  block basis $(u_n)$ of $(x_n)$ and a $\delta>0$,
 so that $f(u_n)\ge \delta$, for $n\in\N$.  Since by Lemma \ref{uncond.block} we can assume that
 $(u_n)$ is  $2K_u$-unconditional, it follows  $(\lambda_i)\in c_{00}$ that
    $$\Big\|\sum \lambda_i u_i\Big\|\ge \frac1{2K_u}
  \Big\|\sum |\lambda_i| u_i\Big\|\ge f\Big(\sum |\lambda_i| u_i\Big)\ge\frac{\delta}{K_u}\sum |\lambda_i| .
  $$
\end{proof}

\vskip3mm

\noindent\textbf{Acknowledgment.}

This paper forms a portion of the author's doctoral dissertation,
which is being prepared at Texas A$\&$M University and Nankai
University under the direction of Thomas Schlumprecht and Guanggui
Ding. The author thanks Dr. Schlumprecht and Ding for their
invaluable help, guidance and patience.

\end{document}